\newcommand{\Hmm}[1]{\leavevmode{\marginpar{\tiny%
$\hbox to 0mm{\hspace*{-0.5mm}$\leftarrow$\hss}%
\vcenter{\vrule \qquad depth 0.1mm height 0.1mm width \the\marginparwidth}%
\hbox to
0mm{\hss$\rightarrow$\hspace*{-0.5mm}}$\\\relax\raggedright #1}}}
\newtheorem{theorem}{Theorem}[section]
\newtheorem{proposition}[theorem]{Proposition}
\newtheorem{lemma}[theorem]{Lemma}
\newtheorem{definition}[theorem]{Definition}
\newtheorem{remark}[theorem]{Remark}
\def\neweq#1{\begin{equation}\label{#1}}
\def\endeq{\end{equation}}
\def\ds{\displaystyle}
\newcommand{\R}{\mathbb{R}}
\newcommand{\N}{\mathbb{N}}
\newcommand{\HH}{\mathbb{H}}
\newcommand{\eps}{\varepsilon}
\newcommand{\EC}{\overset{E}{\longrightarrow}}
\newcommand{\EEC}{\overset{EE}{\longrightarrow}}
\newcommand{\CC}{\overset{C}{\longrightarrow}}
\begin{document}

\title{   Spectral stability of the  Steklov problem}

\author{Alberto Ferrero, Pier Domenico Lamberti}

\address{\hbox{\parbox{5.7in}{\medskip\noindent{Alberto Ferrero, \\
Universit\`a del Piemonte Orientale, \\
        Dipartimento di Scienze e Innovazione Tecnologica, \\
        Viale Teresa Michel 11, 15121 Alessandria, Italy. \\[5pt]
Pier Domenico Lamberti, \\
Universit\`a di Padova, \\
Dipartimento di Matematica `Tullio Levi-Civita', \\
Via Trieste 63, 35121 Padova, Italy. \\[3pt]
        \em{E-mail addresses: }{\tt alberto.ferrero@uniupo.it, lamberti@math.unipd.it}}}}}

\date{\today}

\maketitle

\begin{abstract} This paper investigates the stability properties of the spectrum of the
classical Steklov problem under domain perturbation. We find
conditions which guarantee the spectral stability and
we show their optimality. We emphasize the fact that our spectral
stability results also involve convergence of eigenfunctions in a
suitable sense according with the definition of connecting system
by \cite{Vainikko}. The convergence of eigenfunctions can be
expressed in terms of the $H^1$ strong convergence. The arguments
used in our proofs are based on an appropriate definition of
compact convergence of the resolvent operators associated with the
Steklov problems on varying domains.

In order to show the optimality of our conditions we present
alternative assumptions which give rise to a degeneration of the
spectrum or to a discontinuity of the spectrum in the sense
that the eigenvalues converge to the eigenvalues of a limit problem
which does not coincide with the Steklov problem on the limiting
domain.
\end{abstract}

\vspace{11pt}

\noindent {\bf Keywords:} Steklov boundary conditions,  spectral
stability, domain perturbations, boundary homogenization.

\vspace{6pt} \noindent {\bf 2000 Mathematics Subject
Classification: 35J25, 35P15, 35M12, 49R05}

\section{Introduction} \label{s:introduction}
In this paper we consider the spectral stability of the classical Steklov problem \cite{Steklov}, namely
\begin{equation} \label{eq:Steklov}
\begin{cases}
\Delta u=0, & \qquad \text{in } \Omega \, , \\
u_{\nu }=\lambda u, & \qquad \text{on } \partial\Omega \, , \\
\end{cases}
\end{equation}
where $u_\nu$ denotes the normal derivative of $u$ on
$\partial\Omega$ and $\lambda$ is a real parameter, see
\cite{girpol} for a survey on this subject. 
Here and in the sequel, the domain for the Steklov problem will always be a bounded domain in $\R^N$ with $N\ge 2$ with appropriate conditions on the regularity of its boundary.

By a solution of \eqref{eq:Steklov} we mean a function $u\in H^1(\Omega)$ such that
\begin{equation} \label{eq:Steklov-var}
\int_\Omega \nabla u \nabla v \, dx=\lambda \int_{\partial\Omega} u v \, dS \, ,
\qquad \text{for any } v\in H^1(\Omega) \, ,
\end{equation}
where  $H^1(\Omega)$ denotes the standard Sobolev space of functions in $L^2(\Omega)$ with first order weak derivatives in $L^2(\Omega)$.
It is well-known that \eqref{eq:Steklov} can be interpreted as an
eigenvalue problem with respect to the parameter $\lambda$: we
recall that $\lambda$ is an eigenvalue for \eqref{eq:Steklov} if
\eqref{eq:Steklov} admits a nontrivial solution in the sense of
\eqref{eq:Steklov-var}. That  nontrivial solution is called
eigenfunction for $\lambda$.

We recall that the set of the eigenvalues of \eqref{eq:Steklov} is
a countable set of isolated nonnegative real  numbers which may be
ordered in an increasing sequence diverging to $+\infty$.
As customary,  we agree
to  repeat each eigenvalue as many
times as its multiplicity:
\begin{equation} \label{eq:sequence}
0=\lambda_0<\lambda_1\le \lambda_2\le \dots \le \lambda_n \le
\dots
\end{equation}
We also recall that the first positive eigenvalue $\lambda_1$
admits the following variational characterization
\begin{equation} \label{eq:lambda1}
\lambda_1=\inf_{v\in \HH(\Omega)} \frac{\int_\Omega |\nabla v|^2 \, dx}{\int_{\partial\Omega}
v^2 \, dS}
\end{equation}
where $\HH(\Omega):=\left\{v\in H^1(\Omega) \setminus
H^1_0(\Omega):\int_\Omega v \, dx=0\right\}$. By a classical
argument it can be shown that all the other successive eigenvalues
admit a minimax characterization as shown in more details in
Section \ref{ss:minimax}.

In this article we study the stability of the eigenvalues
and eigenfunctions of \eqref{eq:Steklov} with respect to domain
perturbation. More precisely if $\{\Omega_\eps\}_{0<\eps\le
\eps_0}$ is a family of domains which converges to a domain
$\Omega$ in a suitable sense, denoting  by
$\{\lambda_n^\eps\}_{n=1}^\infty$, $\{\lambda_n\}_{n=1}^\infty$
the eigenvalues of the Steklov problem in $\Omega_\eps$ and
$\Omega$ respectively, we say that we have stability for the
eigenvalues, if for any $n\ge 1$, $\lambda_n^\eps\to \lambda_n$ as
$\eps\to 0$. We are also interested in what we will call spectral
convergence of $S_{\eps}$ to $S$ where $S_{\eps}$, $S$ denote
the resolvent operators associated with \eqref{eq:Steklov}
 in $\Omega_\eps$ and $\Omega$ respectively, see Section
\ref{s:functional-setting-B} for the precise definitions of these
operators. Exploiting the results contained in \cite{Vainikko},
one can deduce that spectral convergence of those operators
implies not only convergence of eigenvalues but also convergence
of the corresponding eigenfunctions in a suitable sense, see
Theorem \ref{vaithm} for more details.

We note that the behaviour of the eigenvalues of \eqref{eq:Steklov} subject to  domain perturbation
has been discussed in \cite{Bo} which provides sharp  conditions for their stability.
Sufficient conditions ensuring the stability of the resolvent operators (associated with the corresponding  Dirichlet-to-Neumann map) in a class of
star-shaped domains are given in \cite{terelst} where the question whether one could obtain the same results in a general setting is considered  ``out of reach''.
We  also cite the very recent paper \cite{buhema} concerning the asymptotic behaviour of the Steklov problem on dumbbell domains.

The aim of the present paper is to study not only the stability of the eigenvalues as done in \cite{Bo} but also the stability of  the eigenfunctions,  and to give an answer to the question
raised in \cite{terelst}     by proving stability results for the resolvent operators mentioned above in a more general class of domains.  Moreover,  we prove that our conditions are sharp by analysing the behaviour of the resolvent operators  in a limiting case and we study the degeneration phenomena  appearing when the strength
 of the boundary perturbations exceeds the threshold corresponding to that case.  We also apply our results to the homogenization of problem \eqref{eq:Steklov}  subject to periodic boundary perturbations.

 Our results are in the spirit of the  results of \cite{arbru10, ArBru,  arbru06, DancerDaners} concerning  the solutions of second order linear and nonlinear boundary problems with  Robin type boundary conditions.  Namely, in  \cite{DancerDaners} the authors consider the problem
 \begin{equation}\label{robin}\nonumber
 \left\{
 \begin{array}{ll}
 -\Delta u=f(u),& {\rm in}\ \Omega,\\
 u_{\nu}+\beta u=0,& {\rm in}\ \partial\Omega,
 \end{array}
 \right.
 \end{equation}
 where $\beta$ is a fixed positive constant and provide  stability and instability results for the solutions $u$ upon  perturbation of $\Omega$. In particular, they  identify the  conditions on the  perturbations of $\Omega$ which cause  the degeneration of Robin boundary conditions to Dirichlet boundary conditions. They also discuss the  case where a  deformation of the coefficient $\beta$ appears in the limiting problem. Similar results have been obtained
 in \cite{arbru10, ArBru,  arbru06} for nonlinear boundary conditions of the form $u_{\nu}+g(x,u)=0$.

We observe that the resolvent  operators $S_\eps$ act on functional spaces
which depend on the parameter $\eps$ so that, in order to provide
a reasonable notion of compact convergence for operators acting on
different functional spaces, we make use of suitable ``connecting
systems'' which allow to pass from the varying Hilbert spaces
defined on $\Omega_{\eps}$ to the limiting fixed Hilbert space
defined on $\Omega$.
This approach is made possible thanks to a number of
notions and results which go back to the works of F.
Stummel~\cite{Stu} and G. Vainniko~\cite{Vainikko} and which have
been further implemented in \cite{ArCaLo, CarPisk}. See also the
recent paper \cite{Bogli}. All these notions are discussed in
detail in Section~ \ref{ss:convergenceint}. These abstract results
are based on the notion of $E$-compact convergence of operators,
possibly acting on different Hilbert spaces.

In order to prove stability results and  the corresponding $E$-compact convergence of the operators $S_{\eps}$,
 we consider domains $\Omega_{\eps}$,
$\Omega $ belonging to uniform classes of domains with $C^{0,1}$
boundaries - see Definition~\ref{d:atlas-B} - and we  require that
the boundaries of $\Omega_{\eps}$ converge to the boundary of
$\Omega$ in the sense of \eqref{eq:assumptions}-\eqref{eq:conv-per}. We observe that if the
boundaries of $\Omega_{\eps}$ converge to the boundary of $\Omega$
in $C^1$ then conditions \eqref{eq:assumptions}-\eqref{eq:conv-per} are satisfied.
Conditions \eqref{eq:assumptions}-\eqref{eq:conv-per} make
possible the construction in Section~\ref{subsecoperatorsE} of a
family of linear continuous operators $E_{\eps}:H^1(\Omega)\to
H^1(\Omega_{\eps})$, i.e. the connecting system, which allows us to
treat operators defined on $H^1(\Omega_\eps)$ and $H^1(\Omega)$ simultaneously. In fact, the family of operators
$\{E_\eps\}$ is a connecting system in the sense of \cite{Vainikko} provided that
$H^1(\Omega_\eps)$ is endowed with the equivalent norm
$$
\|u\|_\eps:=\left(\int_{\Omega_\eps} |\nabla u|^2 \, dx+\int_{\partial\Omega_\eps} u^2 \, dS\right)^{1/2} \qquad \text{for any }
u \in H^1(\Omega_\eps) \, .
$$
In particular, the family of operators $\{E_\eps\}$  makes possible the
definition of the notion of $E$-convergence for a sequence
$u_{\eps }\in H^1(\Omega_{\eps})$ to a function $u\in
H^1(\Omega)$, i.e.
\begin{equation}
\label{econvergenza}
\|  u_{\eps }-E_{\eps}u\|_{\eps}\to 0, \ \ {\rm
as}\ \eps \to 0,
\end{equation}
which allows to overcome the problem of having different functional spaces to deal with. We note that, similarly to \cite{FeLa}, the operators $E_{\eps}$  are
constructed by pasting together suitable pull-back operators
defined by means of  appropriate local diffeomorphisms.

In is important to observe that the operators $E_{\eps}$ satisfy the following property: for any
$\eps>0$ there exists an open set $K_{\eps }\subset \Omega\cap
\Omega _{\eps }$ such that $(E_{\eps}u)(x)=u(x)$ for all $x\in
K_{\eps}$ and such that $|(\Omega_{\eps}\cup \Omega ) \setminus
K_\eps |\to 0$ as $\eps \to 0$. (Here and in the sequel $|A|$
denotes the Lebesgue measure of any measurable set $A\subset
\R^N$.) This, combined with  \eqref{econvergenza}, implies the more familiar strong convergence
\begin{equation}
\label{intersection} \|  u_{\eps }-u\|_{H^1(\Omega _{\eps } \cap
\Omega )  }\to 0, \ \ {\rm as}\ \eps \to 0
\end{equation}
as we state in Proposition \ref{intersec}.

The main results of the paper are contained in Section
\ref{s:main-results}; we proceed by describing in details the
meaning of their contents.

The first main result is Theorem \ref{t:main-1} in which we prove
the spectral stability of \eqref{eq:Steklov} under the validity of
conditions \eqref{eq:assumptions}-\eqref{eq:conv-per}. More
precisely we prove the $E$-compact convergence of  $S_{\eps}$ to
$S$ as $\eps\to 0$. On the base of the general
Theorem~\ref{vaithm}, this implies the spectral convergence of
$S_{\eps}$ to $S$, hence the convergence of the eigenvalues and
the $E$-convergence of the eigenfunctions in the sense of
Theorem~\ref{vaithm}. In particular the eigenfunctions converge in
the sense of \eqref{intersection}.

The subsequent results aim to show the optimality of conditions
assumed in Theorem \ref{t:main-1}.

Theorem \ref{t:main-3} can be considered an extension of Theorem
\ref{t:main-1} since we assume again the validity of
\eqref{eq:assumptions} but we replace \eqref{eq:conv-per} with the
weaker condition \eqref{eq:weak-L1}. Indeed, if we assume the
validity of \eqref{eq:assumptions} and \eqref{eq:conv-per}
simultaneously then \eqref{eq:weak-L1} holds true with
$\gamma_j=\sqrt{1+|\nabla_{x'}g_j|^2}$ so that the function
$\gamma$ defined in \eqref{eq:gamma} satisfies $\gamma\equiv 1$ on
$\partial\Omega$. In such a case the eigenvalues $\lambda_n^\eps$
converge to $\lambda_n$ as $\eps\to 0$ and spectral stability is
proved. But whenever the function $\gamma\not\equiv 1$ on
$\partial \Omega$, convergence of the eigenvalues to the natural
limit problem fails to be true thus giving rise to a discontinuity phenomenon. In Proposition \ref{p:comparison}
(ii), we exhibit an explicit example where the function
$\gamma\not \equiv 1$.
We note that,  assumption  \eqref{eq:weak-L1} is, mutatis mutandis,  the condition  used  in \cite[Theorem~4.4]{DancerDaners} and in \cite{ArBru, arbru06}.

Then in Theorem \ref{t:main-2}, we assume the alternative
conditions \eqref{eq:th-main-2-0}-\eqref{eq:th-main-2} and we
prove degeneration of eigenvalues by showing that
$\lambda_n^\eps\to 0$ as $\eps\to 0$ for any $n\ge 1$. An explicit
example in which \eqref{eq:th-main-2-0}-\eqref{eq:th-main-2} hold
true can be found in Proposition \ref{p:comparison} (iii).
We also note that condition \eqref{eq:th-main-2}  was used in  \cite{arbru10, DancerDaners} to prove the degeneration of the Robin problem to the Dirichlet problem.

Finally, we provide a more clear picture of the
conditions contained in Theorems \ref{t:main-1}-\ref{t:main-2} by
considering a particular case in which the domains $\Omega_\eps$
and $\Omega$ are in the form
\begin{equation*}
\Omega_\eps=\{(x',x_N)\in \R^N:x'\in W\, , \ -1<x_N<g_\eps(x')\}
\, , \qquad \Omega=W\times (-1,0)
\end{equation*}
where $W$ is a cuboid or a bounded domain in $\R^{N-1}$ of class
$C^{0,1}$, $g_\eps(x')=\eps^\alpha b(x'/\eps)$ for any $x'\in W$
and $b:\R^{N-1}\to [0,+\infty)$ is a nonconstant $Y$-periodic
function with $Y=\left(-\frac 12,\frac 12\right)^{N-1}$ the unit
cell in $\R^{N-1}$. Note that this type of periodic perturbations is classical in homogenization theory, in particular in the study of boundary homogenization problems, see e.g.,  \cite{ArLa2, ferlam} and the references therein.
In this particular situation, the conditions introduced in Theorems
\ref{t:main-1}-\ref{t:main-2} find a clear representation
depending on the value assumed by the exponent $\alpha$. More
precisely, it is proved in Proposition \ref{p:comparison} that the
assumptions of the three main theorems correspond to the cases
$\alpha>1$, $\alpha=1$ and $0<\alpha<1$ respectively.
Taking into account what  was shown in Proposition
\ref{p:comparison}, the statements of Theorems
\ref{t:main-1}-\ref{t:main-2} can be unified in a single result
contained in Theorem \ref{t:tricotomia}. This theorem shows
spectral stability for $\alpha>1$, degeneration in the case
$0<\alpha<1$ and a  discontinuity phenomenon in the limiting
case $\alpha=1$ in the sense that the eigenvalues of
\eqref{eq:Steklov-modified} converge to the eigenvalues of the
modified problem \eqref{eq:Steklov-weighted-bis} whose eigenvalues
are given by the eigenvalues of \eqref{eq:Steklov-modified}
divided by the constant $C_b=\int_Y \sqrt{1+|\nabla_{x'}b(x')|^2}
\, dx'$.

We note that in our previous paper \cite{FeLa}, we considered the spectral stability of certain Steklov problems for the biharmonic operator.  Although the proof of   Theorem~\ref{t:main-1}  is based on  a  method  similar to the one used for the proof of the corresponding stability results in  \cite{FeLa},
the part of the present paper concerning the discontinuity and the degeneration phenomenon - namely, Theorems~\ref{t:main-3},  \ref{t:main-2} -  is completely different and is   specifically  designed for problem  \eqref{eq:Steklov}.

We conclude this section by explaining how this paper is organized.
In Section \ref{preliminaries} we state some well-known basic
results about spectral stability for operators defined on abstract
Hilbert spaces, we introduce the main assumptions about the
perturbed domains $\Omega_\eps$ and the limit domain $\Omega$, we
contruct the resolvent operator $S$ associated with
\eqref{eq:Steklov}, we construct a connecting system acting from
$H^1(\Omega)$ into $H^1(\Omega_\eps)$ and finally we show that the
classical minimax characterization of eigenvalues applies to the
Steklov spectrum.
Section \ref{s:main-results} is devoted to the statements of the
main results already described above.
Section \ref{s:p-t:main-1} is devoted to the proof of Theorem
\ref{t:main-1} and to the statement and the proof of Proposition
\ref{intersec}.
Sections \ref{s:p-t:main-3}-\ref{s:p-t:main-2} are devoted to the
proofs of Theorems \ref{t:main-3}-\ref{t:main-2} respectively,
Section \ref{s:p-p:comparison} to the proof of Proposition
\ref{p:comparison} and finally Section \ref{s:t-t:tricotomia} to
the proof of Theorem \ref{t:tricotomia}.

\section{Preliminaries and notation} \label{preliminaries}

\subsection{A general approach to spectral stabilty} \label{ss:convergenceint}
The study of the spectral stability of \eqref{eq:Steklov} is
reduced to the study of suitable families  $\{B_\eps\}_{0<\eps\le
\eps_0}$ of non-negative compact self-adjoint operators defined in
Hilbert spaces ${\mathcal{H}}_{\eps}$ associated with the domains
$\Omega_{\eps }$.

In order to follow this approach we recall here the notion of
$E$-convergence. As already mentioned in the Introduction, we
follow the approach of \cite{Vainikko} and the successive
development by \cite{ArCaLo}, \cite{CarPisk}.

According to the notation used in \cite{ArCaLo} (see also
\cite{FeLa}), we denote by ${\mathcal{H}}_\eps$ a family of
Hilbert spaces for $\eps\in [0,\eps_0]$ and we assume that there
exists a family of linear operators $E_\eps:{\mathcal{H}}_0\to
{\mathcal{H}}_\eps$ such that
\begin{equation} \label{eq:norm-convergence}
\|E_\eps u\|_{{\mathcal{H}}_\eps} \overset{\eps\to 0}{\longrightarrow}
\|u\|_{{\mathcal{H}}_0} \, , \qquad \text{for all } u\in {\mathcal{H}}_0 \, .
\end{equation}

\begin{definition} \label{d:E-convergence-ArCaLo} We say that a family
$\{u_\eps\}_{0<\eps\le \eps_0}$, with $u_\eps\in {\mathcal{H}}_\eps$,
$E$-converges to $u\in {\mathcal{H}}_0$ if $\|u_\eps-E_\eps u\|_{{\mathcal{H}}_\eps}\to 0$
as $\eps\to 0$. We write this as $u_\eps \EC u$.
\end{definition}

\begin{definition} \label{d:EE-convergence-ArCaLo}
Let $\{B_\eps\in\mathcal L({\mathcal{H}}_\eps):\eps \in (0,\eps_0]\}$ be a
family of linear and continuous operators. We say that
$\{B_\eps\}_{0<\eps\le \eps_0}$ converges to $B_0\in \mathcal L({\mathcal{H}}_0)$ as $\eps\to 0$
if $B_\eps u_\eps \EC B_0 u$ whenever $u_\eps \EC u$. We write
this as $B_\eps \EEC B_0$.
\end{definition}

\begin{definition} \label{d:precompact-ArCaLo}
Let $\{u_\eps\}_{0<\eps\le \eps_0}$ be a family such that
$u_\eps\in {\mathcal{H}}_\eps$. We say that $\{u_\eps\}_{0<\eps\le
\eps_0}$ is precompact if for any sequence $\eps_n\to 0$ there
exist a subsequence $\{\eps_{n_k}\}$ and $u\in {\mathcal{H}}_0$
such that $u_{\eps_{n_k}} \EC u$.
\end{definition}

\begin{definition} \label{d:CC-convergence-ArCaLo}
We say that $\{B_\eps\}_{0<\eps\le \eps_0}$ with $B_\eps\in
\mathcal L({\mathcal{H}}_\eps)$ and $B_\eps$ compact, converges
compactly to a compact operator $B_0\in \mathcal
L({\mathcal{H}}_0)$ if $B_\eps \EEC B_0$ and for any family
$\{u_\eps\}_{0<\eps\le \eps_0}$ such that $u_\eps\in
{\mathcal{H}}_\eps$, $\|u_\eps\|_{\mathcal H_\eps}=1$, we have
that $\{B_\eps u_\eps\}_{0<\eps\le \eps_0}$ is precompact in the
sense of Definition \ref{d:precompact-ArCaLo}. We write this as
$B_\eps \CC B_0$.
\end{definition}

We now recall some notations already used in \cite{FeLa}. If $B$
is a non-negative compact self-adjoint operator in a infinite
dimensional Hilbert space ${\mathcal{H}}$, its spectrum is a
finite or a countably infinite set and all non-zero elements of
the spectrum are positive eigenvalues of finite multiplicity. When
the spectrum is a countably infinite set, the eigenvalues can be
represented by a non-increasing  sequence $\mu_n$, $n\in \N$, such
that $\lim_{n\to \infty }\mu_n=0$. As usual we agree to  repeat  each eigenvalue in the sequence $\mu_n$, $n\in {\mathbb{N}}$ as many times as its multiplicity.

We also define the notion of generalized eigenfunction: given a
finite set of $m$ eigenvalues $\mu_n, \dots , \mu_{n+m-1}$ with
$\mu_n\ne \mu_{n-1}$ and $\mu_{n+m-1}\ne \mu_{n+m}$, we call
generalized eigenfunction (associated with $\mu_n,  \dots ,
\mu_{n+m-1}$) any linear combination of eigenfunctions associated
with the eigenvalues $\mu_n,  \dots , \mu_{n+m-1}$.

We now state the following theorem which is a simplified rephrased
version of \cite[Theorem~6.3]{Vainikko}, see also
\cite[Theorem~4.10]{ArCaLo}, \cite[Theorem~5.1]{ArrZua},
\cite[Theorem~3.3]{CarPisk} and \cite[Theorem 1]{FeLa}.

 \begin{theorem}
 \label{vaithm}
 Let  $\{B_\eps\}_{0<\eps\le \eps_0}$ and $B_0$ be non-negative compact self-adjoint operators in the Hilbert
 spaces ${\mathcal{H}}_{\eps }$, ${\mathcal{H}}_0$ respectively. Assume that their  eigenvalues are given
 by the sequences $\mu_{n}(\eps )$ and $\mu_{n}(0)$, $n\in \N$, respectively.  If  $B_\eps \CC B_0$ then we have
 spectral convergence of $B_{\eps}$ to $B_0$ as $\eps \to 0$ in the sense that the following statements hold:
 \begin{itemize}
 \item[(i)] For every $n\in \N$ we have $\mu_n(\eps )\to \mu_n(0)$ as $\eps \to 0$.
 \item[(ii)] If $u_n(\eps)$, $n\in \N$, is an orthonormal sequence of eigenfunctions associated with the eigenvalues  $\mu_n(\eps )$ then
 there exists a sequence $\eps_k$, $k\in \N$,  converging to zero and orthonormal sequence of eigenfunctions  $u_n(0)$, $n\in \N$ associated with  $\mu_n(0 )$, $n\in \N$ such that $u_n(\eps_k)\EC u_n(0)$.
 \item[(iii)]  Given  $m$ eigenvalues $\mu_n(0),  \dots , \mu_{n+m-1}(0)$ with
$\mu_n(0)\ne \mu_{n-1}(0)$ and $\mu_{n+m-1}(0)\ne \mu_{n+m}(0)$
 and corresponding orthonormal eigenfunctions $u_n(0),  \dots , u_{n+m-1}(0)$,
 there exist $m$ orthonormal   generalized eigenfunctions   $v_n(\eps ),  \dots , v_{n+m-1}(\eps )$  associated with  $\mu_n(\eps ),  \dots ,
  \mu_{n+m-1}(\eps )$   such that $v_{n+i}(\eps )\EC u_{n+i}(0)$  for all $i=0, 1,\dots , m-1$.
 \end{itemize}
\end{theorem}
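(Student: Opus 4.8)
The plan is to derive all three statements from the Courant--Fischer minimax characterisation of the eigenvalues of a non-negative compact self-adjoint operator, together with a handful of elementary properties of the $E$-convergence structure; throughout I use the paper's convention that all the $\mu_n(\eps)$ and $\mu_n(0)$ are strictly positive. First I would record the needed properties. By polarisation and the linearity of $E_\eps$, hypothesis \eqref{eq:norm-convergence} upgrades to $(E_\eps u,E_\eps v)_{\mathcal H_\eps}\to(u,v)_{\mathcal H_0}$ for all $u,v\in\mathcal H_0$, hence $(u_\eps,v_\eps)_{\mathcal H_\eps}\to(u,v)_{\mathcal H_0}$ whenever $u_\eps\EC u$ and $v_\eps\EC v$; in particular $E$-limits are unique, $\|u_\eps\|_{\mathcal H_\eps}\to\|u\|_{\mathcal H_0}$ when $u_\eps\EC u$, and for $\eps$ small $E_\eps$ is injective on any fixed finite-dimensional $V\subset\mathcal H_0$ (its Gram matrix with respect to a basis of $V$ tends to that of the identity). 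I would also note that $B_\eps\CC B_0$ forces $\sup_\eps\|B_\eps\|<\infty$: if $\|w_\eps\|_{\mathcal H_\eps}=1$ with $(B_\eps w_\eps,w_\eps)_{\mathcal H_\eps}=\|B_\eps\|$, precompactness of $\{B_\eps w_\eps\}$ keeps $\|B_\eps w_\eps\|_{\mathcal H_\eps}$ bounded, while $\|B_\eps w_\eps\|_{\mathcal H_\eps}\ge(B_\eps w_\eps,w_\eps)_{\mathcal H_\eps}=\|B_\eps\|$.

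For (i), the bound $\liminf_{\eps\to 0}\mu_n(\eps)\ge\mu_n(0)$ is the easy half. Let $\phi_1,\dots,\phi_n$ be orthonormal eigenfunctions of $B_0$ for $\mu_1(0)\ge\dots\ge\mu_n(0)$ and $V=\mathrm{span}\{\phi_1,\dots,\phi_n\}$. For $\eps$ small $E_\eps V$ has dimension $n$, so the minimax principle gives $\mu_n(\eps)\ge\min\{(B_\eps E_\eps v,E_\eps v)_{\mathcal H_\eps}/\|E_\eps v\|_{\mathcal H_\eps}^2:v\in V,\ \|v\|_{\mathcal H_0}=1\}$. Since $E_\eps v\EC v$ and therefore $B_\eps E_\eps v\EC B_0 v$, numerator and denominator here are quadratic forms on the finite-dimensional $V$ converging, uniformly on the unit sphere, to $(B_0 v,v)_{\mathcal H_0}$ and to $\|v\|_{\mathcal H_0}^2=1$; hence the right-hand side tends to $\min\{(B_0 v,v)_{\mathcal H_0}:v\in V,\ \|v\|_{\mathcal H_0}=1\}=\mu_n(0)$.

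The reverse inequality is the heart of the proof and the only point at which the precompactness clause of $\CC$ (rather than mere $\EEC$) is used: it is exactly what prevents eigenfunctions of $B_\eps$ from failing to $E$-converge. Fix $\eps_k\to 0$; the $\mu_j(\eps_k)$ being bounded, pass to a subsequence along which $\mu_j(\eps_k)\to\tilde\mu_j$ for every $j$, so that $\tilde\mu_1\ge\tilde\mu_2\ge\cdots$ and $\tilde\mu_j\ge\mu_j(0)$ by the previous step. Suppose $\tilde\mu_{j_0}>\mu_{j_0}(0)$ for some minimal $j_0$; then $\tilde\mu_j\ge\tilde\mu_{j_0}>0$ for all $j\le j_0$. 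Choosing orthonormal eigenfunctions $u_j(\eps_k)$ of $B_{\eps_k}$, the families $\{B_{\eps_k}u_j(\eps_k)\}=\{\mu_j(\eps_k)u_j(\eps_k)\}$ are precompact (last clause of $B_\eps\CC B_0$, $\|u_j(\eps_k)\|_{\mathcal H_{\eps_k}}=1$), so along a common subsequence they $E$-converge, and dividing by $\mu_j(\eps_k)\to\tilde\mu_j\ne 0$ gives $u_j(\eps_k)\EC w_j$ for $j\le j_0$. Passing to $E$-limits, $\{w_j\}_{j\le j_0}$ is orthonormal in $\mathcal H_0$, and combining $B_{\eps_k}u_j(\eps_k)\EC\tilde\mu_j w_j$ with $B_{\eps_k}u_j(\eps_k)\EC B_0 w_j$ and uniqueness of $E$-limits yields $B_0 w_j=\tilde\mu_j w_j$. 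Thus the spectral subspace of $B_0$ for eigenvalues $\ge\tilde\mu_{j_0}$ contains $j_0$ orthonormal vectors, although its dimension equals the number of indices $i$ with $\mu_i(0)\ge\tilde\mu_{j_0}$, which is at most $j_0-1$ since $\mu_{j_0}(0)<\tilde\mu_{j_0}$ and the $\mu_i(0)$ are non-increasing — a contradiction. Hence $\tilde\mu_j=\mu_j(0)$ for all $j$, and since $\eps_k$ was arbitrary $\mu_n(\eps)\to\mu_n(0)$, which is (i). Statement (ii) is now contained in this argument: given an orthonormal eigenfunction sequence $u_n(\eps)$, a diagonal extraction over $n\in\N$ produces a single subsequence $\eps_k$ with $u_n(\eps_k)\EC u_n(0)$, where $\{u_n(0)\}$ is orthonormal and $B_0 u_n(0)=\mu_n(0)u_n(0)$.

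For (iii) I would pass to spectral projections. Since the block $\mu_n(0),\dots,\mu_{n+m-1}(0)$ is separated from $\mu_{n-1}(0)$ and from $\mu_{n+m}(0)$, part (i) guarantees that for $\eps$ small $\mu_n(\eps),\dots,\mu_{n+m-1}(\eps)$ are exactly the eigenvalues of $B_\eps$ inside a fixed compact interval $I\subset(0,\infty)$ which meets the rest of the spectra of both $B_0$ and $B_\eps$ only there; consequently the spectral projections $P_\eps,P_0$ onto $I$ are unambiguously defined, of rank $m$, and equal $p(B_\eps),p(B_0)$ for a fixed continuous $p$ that vanishes near $0$, equals $1$ on $I$, and is $0$ off a slightly larger interval which for $\eps$ small still meets the spectra only in $I$. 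Approximating $p$ uniformly on $[0,\sup_\eps\|B_\eps\|]$ by polynomials vanishing at $0$, and using $B_\eps^{l}\EEC B_0^{l}$ together with the compactness of $B_\eps$ (writing $p(B_\eps)=B_\eps r(B_\eps)$ with $r$ continuous), one obtains $P_\eps\CC P_0$; in particular $P_\eps E_\eps u\EC P_0 u$ for every $u\in\mathcal H_0$. Applied to the prescribed orthonormal basis $u_n(0),\dots,u_{n+m-1}(0)$ of $\mathrm{Ran}(P_0)$, this yields vectors $\widetilde v_{n+i}(\eps):=P_\eps E_\eps u_{n+i}(0)\in\mathrm{Ran}(P_\eps)$ with $\widetilde v_{n+i}(\eps)\EC u_{n+i}(0)$ and with Gram matrix converging to the identity, hence forming a basis of $\mathrm{Ran}(P_\eps)$ for $\eps$ small; a Gram--Schmidt orthonormalisation (whose coefficients converge to the trivial ones) then produces orthonormal $v_{n+i}(\eps)\in\mathrm{Ran}(P_\eps)$ with $v_{n+i}(\eps)-\widetilde v_{n+i}(\eps)\to 0$ in $\mathcal H_\eps$, so $v_{n+i}(\eps)\EC u_{n+i}(0)$ for $i=0,\dots,m-1$. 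Since every vector in $\mathrm{Ran}(P_\eps)$ is a linear combination of eigenfunctions of $B_\eps$ with eigenvalues among $\mu_n(\eps),\dots,\mu_{n+m-1}(\eps)$, the $v_{n+i}(\eps)$ are the required generalized eigenfunctions.
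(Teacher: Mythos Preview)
The paper does not prove this theorem: it is stated without proof as a ``simplified rephrased version'' of \cite[Theorem~6.3]{Vainikko}, with further pointers to \cite{ArCaLo}, \cite{ArrZua}, \cite{CarPisk} and \cite{FeLa}. There is therefore no proof in the paper to compare your argument against.

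Your proof is a correct, self-contained derivation in the classical style. The lower bound in (i) via the Courant--Fischer principle applied to $E_\eps V$, the upper bound by contradiction on the first index where the subsequential limit overshoots (using the precompactness clause to force $E$-convergence of the eigenfunctions and hence produce too many orthonormal eigenvectors of $B_0$ above the threshold), and the diagonal extraction for (ii) are all sound. For (iii), your passage through the spectral projections $P_\eps = p(B_\eps)$, the polynomial approximation of $p$ to obtain $P_\eps\CC P_0$, and the Gram--Schmidt step on $P_\eps E_\eps u_{n+i}(0)$ is a clean and standard route. Two small points worth making explicit if you write this up: the phrase ``since $\eps_k$ was arbitrary'' at the end of (i) conceals a sub-subsequence argument (every sequence has a subsequence with the correct limit, hence the full family converges); and in (iii) the factorisation $p(t)=t\,r(t)$ with continuous $r$ relies on $p$ vanishing on a neighbourhood of $0$, which you did arrange. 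Neither is a gap.
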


\subsection{Classes of domains} \label{opensets}
According to \cite{FeLa}, in order study the  spectral convergence
for the Steklov eigenvalue problem, we shall consider uniform
families of domains with some prescribed common properties. In
this perspective we recall the notion of atlas from
\cite{BuLa}, see also \cite[Section 5]{ArLa2} and \cite[Section
2.2]{FeLa}. According to \cite{ArLa2,BuLa}, given a set $V\subset
\R^N$ and a number $\delta>0$ we write
\begin{equation} \label{eq:def-Vdelta-B}
V_\delta:=\{x\in V:d(x,\partial V)>\delta\} \, .
\end{equation}

\begin{definition} \label{d:atlas-B}
\ \cite[Definition 2.4]{BuLa} Let $\rho>0$, $s,s'\in \N$ with
$s'<s$. Let $\{V_j\}_{j=1}^s$ be a family of open cuboids (i.e.
rotations of rectangle parallelepipeds in $\R^N$) and
$\{r_j\}_{j=1}^s$ be a family of rotations in $\R^N$. We say that
$\mathcal A=(\rho,s,s',\{V_j\}_{j=1}^s,\{r_j\}_{j=1}^s)$ is an
atlas in $\R^N$ with parameters
$\rho,s,s',\{V_j\}_{j=1}^s,\{r_j\}_{j=1}^s$, briefly an atlas in
$\R^N$. Moreover, we say that a bounded domain $\Omega$ in $\R^N$
belongs to the class $C^{k,\gamma}(\mathcal A)$ with $k\in \N$ and
$\gamma\in [0,1]$ if the following conditions are satisfied:
\begin{itemize}
\item[(i)] $\Omega\subset \cup_{j=1}^s (V_j)_\rho$ \ and \
$(V_j)_\rho \cap \Omega\neq \emptyset$ where $(V_j)_\rho$ is meant
in the sense given in \eqref{eq:def-Vdelta-B} ;

\item[(ii)] $V_j\cap \partial\Omega \neq \emptyset$ \ for \
$j=1,\dots,s'$ \ and \ $V_j\cap \partial\Omega=\emptyset$ \ for \
$s'+1\le j\le s$;

\item[(iii)] for $j=1,\dots,s$ \ we have
\begin{align*}
& r_j(V_j)=\{x\in \R^N:a_{ij}<x_i<b_{ij}\, , i=1,\dots,N\}, \quad
j=1,\dots,s ; \\
& r_j(V_j\cap \Omega)=\{x=(x',x_N)\in\R^N:x'\in W_j,
a_{Nj}<x_N<g_j(x')\}, \quad j=1,\dots, s'
\end{align*}
where $x'=(x_1,\dots,x_{N-1})$, $W_j=\{x'\in
\R^{N-1}:a_{ij}<x_i<b_{ij}, i=1,\dots,N-1\}$ and the functions
$g_j\in C^{k,\gamma}(\overline{W_j})$ for any $j\in 1,\dots,s'$ with $k\in
\N\cup \{0\}$ and $0\le \gamma\le 1$. Moreover, for $j=1,\dots,s'$
we assume that $a_{Nj}+\rho\le g_j(x')\le b_{Nj}-\rho$, for all
$x'\in \overline{W_j}$.
\end{itemize}
\noindent Finally we say that $\Omega$ if of class $C^{k,\gamma}$
if it is of class $C^{k,\gamma}(\mathcal A)$ for some atlas
$\mathcal A$. In the sequel $C^{k,0}$ will be simply denoted by $C^k$.
\end{definition}

Let $\Omega$ be a bounded domain in ${\mathbb{R}}^N$ of class $C^{0,1}$. The Hilbert space $H^1(\Omega)$
is naturally endowed with the scalar product
$$
\int_{\Omega} \nabla u \nabla v \, dx+\int_\Omega uv \, dx \qquad \text{for any } u,v\in H^1(\Omega) \, .
$$
However, taking into account the structure of problem
\eqref{eq:Steklov}, it appears reasonable to replace the classical
scalar product of $H^1(\Omega)$ with another one in which the
scalar product in $L^2(\Omega)$ of the two functions $u,v\in
H^1(\Omega)$ is replaced by the scalar product in
$L^2(\partial\Omega)$ of their traces on $\partial\Omega$. In the
next lemma we recall that these two scalar products are equivalent
in $H^1(\Omega)$.

\begin{lemma} \label{complete}
Let $\Omega$ be a bounded domain in ${\mathbb{R}}^N$ of class $C^{0,1}$. Then we have:
\begin{itemize}
\item[(i)] there exists a constant $C(N,\Omega)$ depending only on $N$ and $\Omega$ such that
  \begin{equation*}
    \int_{\partial\Omega} u^2 \, dS \le C(N,\Omega) \left(\int_\Omega |\nabla u|^2 \, dx+\int_{\Omega} u^2 \, dx\right)
    \qquad
    \text{for any } u\in H^1(\Omega) \, ;
    \end{equation*}
more precisely, if $\mathcal A$ is an atlas as in Definition
\ref{d:atlas-B} such that $\Omega$ is of class $C^{0,1}(\mathcal
A)$, the dependence of $C(N,\Omega)$ on $\Omega$ occurs through
the atlas $\mathcal A$ and the $C^{0,1}$ norms of the functions
$g_j$ introduced in the same definition.

\item[(ii)] there exists a constant $C(N,{\rm diam}(\Omega))$ depending only on $N$ and ${\rm diam}(\Omega)$, where ${\rm diam}(\Omega)$ denotes the diameter of $\Omega$, such that
    \begin{equation*}
    \int_\Omega u^2 \, dx\le C(N,{\rm diam}(\Omega)) \left(\int_\Omega |\nabla u|^2 \, dx+\int_{\partial \Omega} u^2 \, dS
    \right) \qquad
    \text{for any } u\in H^1(\Omega) \, ;
    \end{equation*}

\item[(iii)] the following scalar product
\begin{equation*}
\int_\Omega \nabla u \nabla v \, dx+\int_{\partial\Omega} uv \, dS
\qquad \text{for any } u,v\in H^1(\Omega)
\end{equation*}
is equivalent to the original scalar product of $H^1(\Omega)$.

\end{itemize}

\end{lemma}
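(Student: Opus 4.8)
The plan is to prove the three items of Lemma~\ref{complete} in the order (i), (ii), (iii), since (iii) is an immediate consequence of (i) and (ii). All three are standard facts about Sobolev trace theory, but the point is to keep track of exactly which geometric data of $\Omega$ the constants depend on, since in the sequel we shall need these estimates to be \emph{uniform} over a class $C^{0,1}(\mathcal A)$.

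\proof
\emph{Proof of (i).} The plan is to localize via the atlas. Fix a partition of unity $\{\psi_j\}_{j=1}^{s}$ subordinate to $\{V_j\}_{j=1}^s$ with $\sum_j \psi_j = 1$ on a neighbourhood of $\overline\Omega$; since only the charts with $1\le j\le s'$ meet $\partial\Omega$, we have
\begin{equation*}
\int_{\partial\Omega} u^2 \, dS = \sum_{j=1}^{s'} \int_{\partial\Omega} \psi_j u^2 \, dS \, .
\end{equation*}
On each such chart, after applying the rotation $r_j$ we are in the situation of Definition~\ref{d:atlas-B}(iii), i.e. the portion of $\partial\Omega$ inside $V_j$ is the graph $x_N = g_j(x')$ with $g_j \in C^{0,1}(\overline{W_j})$, so $dS = \sqrt{1+|\nabla_{x'}g_j|^2}\,dx'$. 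For a fixed $x'\in W_j$ write, for $t\in (a_{Nj},g_j(x'))$,
\begin{equation*}
(\psi_j u^2)(x',g_j(x')) = (\psi_j u^2)(x',t) + \int_t^{g_j(x')} \partial_{x_N}\big[(\psi_j u^2)(x',\tau)\big] \, d\tau \, .
\end{equation*}
Integrating in $t$ over an interval of length $\rho$ (which fits inside the slab by the condition $a_{Nj}+\rho\le g_j\le b_{Nj}-\rho$), dividing by $\rho$, expanding the $x_N$-derivative by the product rule and using $2|u||\partial u| \le u^2 + |\partial u|^2$ together with the bounds on $\psi_j$ and $\nabla\psi_j$, one gets
\begin{equation*}
\int_{W_j} (\psi_j u^2)(x',g_j(x')) \, dx' \le c_j \left( \int_{V_j\cap\Omega} u^2 \, dx + \int_{V_j\cap\Omega} |\nabla u|^2 \, dx \right),
\end{equation*}
where $c_j$ depends only on $\rho$, on $\|\psi_j\|_{C^1}$ and on $N$. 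Multiplying through by $\sup_{W_j}\sqrt{1+|\nabla_{x'}g_j|^2} \le \sqrt{1+\|\nabla_{x'}g_j\|_\infty^2}$ to pass back to surface measure, summing over $j=1,\dots,s'$ and using $V_j\cap\Omega\subset\Omega$ gives the claimed inequality, with $C(N,\Omega)$ depending only on $N$, on the atlas parameters $\rho,s,s'$ (through the number of charts and the choice of $\{\psi_j\}$, which can be fixed once the cuboids are fixed), and on the $C^{0,1}$ norms $\|g_j\|_{C^{0,1}(\overline{W_j})}$. By a density argument it suffices to prove this for $u\in C^\infty(\overline\Omega)$, which is legitimate since $\Omega$ is Lipschitz.

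\emph{Proof of (ii).} Here I would argue by contradiction / compactness rather than explicitly, but a direct route also works. Suppose first, for the direct route, that $\int_{\partial\Omega} u^2\,dS = 0$, i.e. $u$ has zero trace, so $u\in H^1_0(\Omega)$; then the classical Poincaré inequality in a bounded domain (with the standard constant controlled by $\mathrm{diam}(\Omega)$) gives $\int_\Omega u^2 \le c(N)\,\mathrm{diam}(\Omega)^2 \int_\Omega |\nabla u|^2$, which is the desired estimate in this degenerate case. For the general case one uses the compactness argument: if the inequality failed there would be $u_k\in H^1(\Omega)$ with $\int_\Omega u_k^2 = 1$ and $\int_\Omega |\nabla u_k|^2 + \int_{\partial\Omega} u_k^2\,dS \to 0$; then $\{u_k\}$ is bounded in $H^1(\Omega)$, so up to a subsequence $u_k \rightharpoonup u$ in $H^1(\Omega)$ and $u_k\to u$ in $L^2(\Omega)$ (Rellich, valid on Lipschitz domains), hence $\int_\Omega u^2 = 1$; lower semicontinuity forces $\nabla u = 0$, so $u$ is a constant $c\ne 0$, and continuity of the trace operator on $H^1(\Omega)$ (from part~(i)) together with $\int_{\partial\Omega} u_k^2\,dS\to 0$ gives $\int_{\partial\Omega} c^2\,dS = 0$, i.e. $c=0$, a contradiction. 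The compactness argument as stated only yields existence of \emph{some} constant, not its dependence on $\mathrm{diam}(\Omega)$ alone; to get that dependence one rescales: replacing $\Omega$ by $t\Omega$ transforms the inequality by explicit powers of $t$, so it is enough to prove it for domains of diameter $1$, where a pure compactness argument within the class of all bounded open sets of diameter at most $1$ — or, more elementarily, the observation that any such $\Omega$ is contained in a ball $B$ of radius $1$ and one can extend and compare — shows the constant is universal in $N$. I expect this dimensional-analysis bookkeeping to be the only genuinely fiddly point.

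\emph{Proof of (iii).} Combining (i) and (ii): for $u\in H^1(\Omega)$,
\begin{equation*}
\int_\Omega |\nabla u|^2\,dx + \int_{\partial\Omega} u^2\,dS \le (1+C(N,\Omega))\left(\int_\Omega |\nabla u|^2\,dx + \int_\Omega u^2\,dx\right),
\end{equation*}
by (i), and conversely
\begin{equation*}
\int_\Omega |\nabla u|^2\,dx + \int_\Omega u^2\,dx \le (1+C(N,\mathrm{diam}(\Omega)))\left(\int_\Omega |\nabla u|^2\,dx + \int_{\partial\Omega} u^2\,dS\right),
\end{equation*}
by (ii), so the two quadratic forms are comparable and the associated norms (hence scalar products) are equivalent. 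The main obstacle in the whole lemma is not any single estimate — each is classical — but the requirement, crucial for the uniform-domain arguments later, of exhibiting constants that depend on $\Omega$ only through the atlas data and the $C^{0,1}$ norms of the $g_j$ in case~(i), and only through the diameter in case~(ii); this is handled by the localization-plus-partition-of-unity scheme for (i) and by the rescaling reduction to diameter $1$ for (ii).
\endproof
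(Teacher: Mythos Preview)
Your treatment of (i) and (iii) is correct; the paper simply cites the classical trace theorem (Ne\v{c}as) for (i) and deduces (iii) from (i)--(ii) exactly as you do, so your localized fundamental-theorem-of-calculus argument is just a fleshed-out version of what the paper leaves implicit.

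For (ii), however, there is a genuine gap, and the paper's route is different and cleaner. Your compactness argument gives, for each fixed $\Omega$, \emph{some} constant; you correctly observe this does not yield the dependence on $\mathrm{diam}(\Omega)$ alone, and you propose to rescale to diameter $1$ and then argue again. But neither of your two suggestions for the diameter-$1$ case works: there is no compactness in ``the class of all bounded open sets of diameter at most $1$'' that would let you run a contradiction argument uniformly over domains, and the extension-to-a-ball idea fails because Sobolev extension operators depend on the Lipschitz character of $\partial\Omega$, not merely on $\mathrm{diam}(\Omega)$. So as written your proof of (ii) does not establish the claimed constant $C(N,\mathrm{diam}(\Omega))$.

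The paper instead invokes the divergence formula and the H\"older--Young inequality, which gives the diameter dependence directly. Concretely: fix $x_0\in\Omega$, set $X(x)=x-x_0$ so that $\mathrm{div}\,X=N$ and $|X|\le\mathrm{diam}(\Omega)$; then
\[
N\int_\Omega u^2\,dx=\int_\Omega u^2\,\mathrm{div}\,X\,dx=\int_{\partial\Omega} u^2\,X\!\cdot\!\nu\,dS-2\int_\Omega u\,\nabla u\!\cdot\!X\,dx,
\]
and bounding $|X\cdot\nu|,|X|\le\mathrm{diam}(\Omega)$ together with Young's inequality $2|u||\nabla u|\le \varepsilon u^2+\varepsilon^{-1}|\nabla u|^2$ (choosing $\varepsilon$ so that the $u^2$ term can be absorbed on the left) yields the inequality with an explicit constant depending only on $N$ and $\mathrm{diam}(\Omega)$. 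This is the ``classical way'' the paper alludes to, and it closes the gap your compactness approach leaves open.
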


\begin{proof}
Part (i) of the lemma is a well known result from classical trace theorems, see for example the book by \cite{Necas}. Part (ii)
can be obtained in a classical way by using the divergence formula
and the H\"older-Young inequality. Finally, part (iii) is an
immediate consequence of (i) and (ii).
\end{proof}

Thanks to Lemma \ref{complete} the space $H^1(\Omega)$ may be equivalently endowed with the scalar product
\begin{align} \label{eq:prod-0}
 (u,v)_0:=\int_{\Omega} \nabla u\nabla v \, dx+\int_{\partial\Omega} uv \, dS  \qquad \text{for any } u,v\in
H^1(\Omega)
\end{align}
and the corresponding norm
\begin{align} \label{eq:norm-0}
 \|u\|_0:=(u,u)_0^{1/2}  \qquad \text{for any } u\in
H^1(\Omega) \, .
\end{align}

\subsection{The functional setting} \label{s:functional-setting-B}
Assume that  $\Omega\subset \R^N$ ($N\ge 2$) is a bounded domain
of class $C^{0,1}$.

Similarly to \cite{FeLa}, we introduce the following resolvent
operator $S:H^1(\Omega)\to H^1(\Omega)$ associated with
\eqref{eq:Steklov} which turns out to be a nonnegative
self-adjoint compact operator.

In order to costruct the operator $S$, we first introduce the
operator \ $T:H^1(\Omega)\to (H^1(\Omega))'$ defined by
\begin{equation} \label{eq:def-T}
_{(H^1(\Omega))'} \langle Tu,v\rangle_{H^1(\Omega)}:=\int_\Omega
\nabla u \nabla v \, dx+\int_{\partial\Omega} uv \, dS \qquad
\text{for any } u,v\in H^1(\Omega) \, .
\end{equation}
The operator $T$ is clearly well-defined and continuous. Moreover
by Lemma \ref{complete} (iii) and Lax-Milgram Theorem, we also
deduce that $T$ is invertible and $T^{-1}$ is continuous.

Then we introduce the operator $J:H^1(\Omega)\to (H^1(\Omega))'$ defined by
\begin{equation} \label{eq:def-J}
_{(H^1(\Omega))'} \langle Ju,v \rangle_{H^1(\Omega)} :=
\int_{\partial\Omega} u v \, dS \qquad \text{for any } u,v\in
H^1(\Omega) \, .
\end{equation}
Since the trace map
\begin{align} \label{eq:trace}
 H^1(\Omega) & \mapsto L^2(\partial\Omega) \\
 \notag  u & \mapsto u_{|\partial\Omega}
\end{align}
is well-defined and compact being $\partial\Omega$ Lipschitzian  (see \cite[Theorem 6.2, Chap. 2]{Necas} for more details), then the operator $J$ is also
well-defined and compact.

We are ready to define the operator $S:H^1(\Omega) \to
H^1(\Omega)$ as $S:=T^{-1} \circ J$. Clearly $S$ is a linear
compact operator and moreover it is easy to see that it is also
self-adjoint. Moreover one can show that $\mu\neq 0$ is an
eigenvalue of $S$ with corresponding eigenfunction $u$ if and only
if $\lambda:=\frac 1\mu-1$ is an eigenvalue of \eqref{eq:Steklov}
with corresponding eigenfunction $u$.

\subsection{Domain perturbations and construction of a connecting system} \label{subsecoperatorsE}

Let $\mathcal A$ be an atlas. Let $\{\Omega_\eps\}_{0<\eps\le
\eps_0}$ be a family of domains of class $C^{0,1}(\mathcal A)$
which converges to a fixed domain $\Omega$ of class
$C^{0,1}(\mathcal A)$ in a sense which will be specified below.
For any $0<\eps\le \eps_0$ denote by
\begin{equation*}
S_{\eps}:H^1(\Omega_\eps) \to H^1(\Omega_\eps)
\end{equation*}
the resolvent operators associated with \eqref{eq:Steklov} in
$\Omega_\eps$ according with the definition given in Section
\ref{s:functional-setting-B}.

Since our final purpose will be to apply the abstract results of
Section \ref{ss:convergenceint}, we need to define a family of
operators $E_{\eps}$, which satisfy condition
\eqref{eq:norm-convergence}. In the specific case under
consideration, this means that we have to introduce
 linear operators
$E_\eps:H^1(\Omega)\to H^1(\Omega_\eps)$ such that
\begin{equation} \label{eq:conv-norm}
\|E_\eps u\|_\eps \to \|u\|_0 \qquad \text{as } \eps\to 0\, , \quad \text{for any } u\in
H^1(\Omega) \, ,
\end{equation}
where
\begin{align}  \label{eq:equiv-norms}
& (u,v)_\eps:=\int_{\Omega_\eps} \nabla u\nabla v \, dx+\int_{\partial\Omega_\eps} uv \, dS,  \qquad \text{for any } u,v\in
H^1(\Omega_\eps), \\[7pt]
\notag & \|u\|_\eps=(u,u)_\eps^{1/2}  \qquad \text{for any } u\in
H^1(\Omega_\eps) \, .
\end{align}
We recall that by Lemma \ref{complete} the norms $\|\cdot\|_\eps$
and $\|\cdot\|_0$ are equivalent to the original norms of
$H^1(\Omega_\eps)$ and $H^1(\Omega)$ respectively. For this reason
it will be convenient in the sequel, except when it is otherwise
specified, to endow the spaces $H^1(\Omega_\eps)$ and
$H^1(\Omega)$ with the scalar products $(\cdot,\cdot)_\eps$ and
$(\cdot,\cdot)_0$ respectively.

In order to prove \eqref{eq:conv-norm}, we proceed ad in
\cite{ArLa2}, see also \cite{FeLa}. Denote by $g_j, g_{\eps,j}\in
C^{0,1}(\overline{W}_j)$ the functions corresponding respectively
to $\Omega$ and $\Omega_\eps$ according to Definition
\ref{d:atlas-B}.

Suppose that the following assumptions hold true: for any
$j=1,\dots ,s'$ and $k\in \{1,\dots,N-1\}$
\begin{align} \label{eq:assumptions}
\lim_{\eps\to 0} \|g_{\eps,j}-g_j\|_{L^\infty(W_j)}=0 \, , \quad
\left\|\tfrac{\partial g_{\eps,j}}{\partial
x_k}\right\|_{L^\infty(W_j)}\!\!=\!O(1), \qquad \text{as } \eps\to
0
\end{align}
and
\begin{align} \label{eq:conv-per}
{\rm Per}(\Omega_\eps)\to {\rm Per}(\Omega), \qquad \text{as }
\eps\to 0,
\end{align}
where ${\rm Per}(\Omega_\eps)$ and ${\rm
Per}(\Omega)$ denote the perimeters of the domains $\Omega_\eps$
and $\Omega$ respectively, i.e. the $(N-1)$-dimensional measures
of $\partial\Omega_\eps$ and $\partial\Omega$ respectively.

\begin{remark} We recall that condition \eqref{eq:conv-per}
is a condition already used in \cite{Bo} to prove convergence of
the eigenvalues of the Steklov operator. Moreover by the proof of
\cite[Proposition 3.2]{Bo} one can deduce that, assuming
\eqref{eq:assumptions}-\eqref{eq:conv-per}, the following
pointwise convergence
\begin{equation} \label{eq:Jac}
\sqrt{1+|\nabla_{x'}g_{\eps,j}|^2} \to \sqrt{1+|\nabla_{x'}g_{j}|^2} \quad \text{a.e. in } W_j \quad \text{as } \eps \to 0 \, ,
\end{equation}
holds true.
\end{remark}

Following the construction introduced in \cite{ArLa2} and used in
\cite{FeLa}, we are going to define the family of operators
$\{E_\eps\}_{0<\eps\le \eps_0}$ (up to shrink $\eps_0$ if
necessary) by using a partition of unity and pasting together
suitable pull-back operators associated with local diffeomorphisms
defined on each cuboid of the atlas $\mathcal A$. Note that in the
simplified setting of one single cuboid, partition of unity would
not be required and the operator $E_\eps$ would be simply defined
as in Remark \ref{r:cuboid} below.

Let $\hat k$ be a fixed constant satisfying $\hat k>4$ whose
meaning will be explained below. Let us define
\begin{equation*} 
\kappa_\eps:=\max_{1\le j\le s'}
\|g_{\eps,j}-g_j\|_{L^\infty(W_j)}\, , \qquad k_\eps:= \hat k
\kappa_\eps \, , \qquad \tilde g_{\eps,j}:=g_{\eps,j}-k_\eps \, ,
\end{equation*}
and
\begin{equation*}
K_{\eps,j}:=\{(x',x_N)\in W_j\times
(a_{Nj},b_{Nj}):a_{Nj}<x_N<\tilde g_{\eps,j}(x')\} \qquad \text{
for any } j=1,\dots,s' \, .
\end{equation*}
For any $1\le j\le s'$ we define the map $h_{\eps,j}:r_j(\overline{\Omega_\eps\cap V_j}) \to
\R$
\begin{equation} \label{eq:def-h}
h_{\eps,j}(x',x_N):=
\begin{cases} 0, & \qquad \text{if } a_{jN} \le x_N\le \tilde g_{\eps,j}(x'), \\
\left(g_{\eps,j}(x')-g_j(x')\right)\left(\frac{x_N-\tilde g_{\eps,j}(x')}{g_{\eps,j}(x')-\tilde g_{\eps,j}(x')}\right)^2,
& \qquad \text{if } \tilde g_{\eps,j}(x')< x_N\le g_{\eps,j}(x') \, .
\end{cases}
\end{equation}
We observe that $h_{\eps,j}\in C^{0,1}(r_j(\overline{\Omega_\eps
\cap V_j}))$ and that the map
$\Phi_{\eps,j}:r_j(\overline{\Omega_\eps \cap V_j}) \to
r_j(\overline{\Omega \cap V_j})$ defined by
$\Phi_{\eps,j}(x',x_N):=(x',x_N-h_{\eps,j}(x',x_N))$ is a
homeomorphism of class $C^{0,1}$. When $s'+1\le j\le s$ we define
$\Phi_{\eps,j}:r_j(\overline{V_j})\to r_j(\overline{V_j})$ as the
identity map.

Consider now a partition of unity $\{\psi_j\}_{1\le j\le s}$
subordinate to the open cover $\{V_j\}_{1\le j\le s}$ of the
compact set $\overline{\Omega\cup \bigcup_{\eps\in (0,\eps_0]}
\Omega_\eps}$, see \cite[Page 12]{FeLa}.

We also define the deformation
$\Psi_{\eps,j}:\overline{\Omega_\eps \cap V_j}\to \overline{\Omega
\cap V_j}$ by $\Psi_{\eps,j}:=r_j^{-1}\circ \Phi_{\eps,j}\circ
r_j$. In this way $\Psi_{\eps,j}$ becomes a $C^{0,1}$
homeomorphism from $\overline{\Omega_\eps \cap V_j}$ onto
$\overline{\Omega \cap V_j}$ for any $j\in \{1,\dots,s\}$.

From the definition of $h_{\eps,j}$ and the restriction $\hat k>4$, we deduce that
\begin{equation} \label{eq:stima-Jacob}
\frac 12 \le {\rm det}(D\Psi_{\eps,j}(x))\le \frac 32 \qquad \text{for any } x\in \Omega_\eps\cap V_j \, .
\end{equation}
In order to show this, we observe that ${\rm det}(D\Psi_{\eps,j})={\rm det}(D\Phi_{\eps,j})=1-\frac{\partial h_{\eps,j}}{\partial x_N}(x',x_N)$.

Given $u\in H^1(\Omega)$ we put $u_j=\psi_j u$ for any $j\in
\{1,\dots,s\}$ in such a way that $u=\sum_{j=1}^s u_j$. Then we
define
\begin{equation} \label{eq:E-eps}
E_\eps u:=\sum_{j=1}^{s'} \tilde u_{\eps,j}+\sum_{j=s'+1}^s u_j
\in H^1(\Omega_\eps) \,
\end{equation}
where
\begin{equation} \label{eq:uj-tilde}
\tilde u_{\eps,j}(x)=
\begin{cases} u_j(\Psi_{\eps,j}(x)), & \qquad \text{if } x\in \Omega_\eps\cap V_j \, , \\[8pt]
 0, & \qquad \text{if } x\in \Omega_\eps\setminus V_j \, .
\end{cases}
\end{equation}
for any $j\in \{1,\dots,s'\}$.

\begin{remark} \label{r:cuboid} If $\Omega$ and $\Omega_\eps$ are in the form
\begin{align*}
& \Omega=\{(x',x_N)\in \R^N:x'\in W \ \text{and} \ a_N<x_N<g(x')\} \, , \\
& \Omega_\eps=\{(x',x_N)\in \R^N:x'\in W \ \text{and} \ a_N<x_N<g_\eps(x')\} \, ,
\end{align*}
where $W$ is a cuboid or a bounded domain in $\R^{N-1}$ of class
$C^{0,1}$ then the operator $E_\eps$ can be defined in the
following simple way
\begin{equation*}
E_\eps u(x)=u(\Phi_\eps(x)) \qquad \text{for any } x\in \Omega_\eps
\end{equation*}
where $\Phi_\eps(x',x_N)=(x',x_N-h_\eps(x',x_N))$ and $h_\eps$ is defined by \eqref{eq:def-h} with $g_\eps$ and $g$ in place of $g_{\eps,j}$ and $g_j$ respectively.
\end{remark}

We will show that the family of operators $\{E_\eps\}_{0<\eps\le
\eps_0}$ is really a connecting system in the sense of \cite{Vainikko}.
We first introduce some notations and state a preliminary result.

For any $y\in \Psi_{\eps,j}(\partial
\Omega_\eps\cap V_i \cap V_j)$ we put
$\Theta_{\eps,i,j}(y):=\Psi_{\eps,i}(\Psi_{\eps,j}^{-1}(y))$ in
order to define
\begin{equation}  \label{eq:Thetaij}
\Theta_{\eps,i,j}:\Psi_{\eps,j}(\partial\Omega_\eps\cap V_i\cap
V_j)\to \Theta_{\eps,i,j}(\Psi_{\eps,j}(\partial\Omega_\eps\cap
V_i\cap V_j))
\end{equation}
as a diffeomorphism between two open subsets of the manifold $\partial\Omega$.

For any $j\in \{1,\dots,s'\}$ let $\Gamma_j:\partial\Omega\cap
V_j\to W_j\subset \R^{N-1}$ be the maps defined by
\begin{equation} \label{eq:Gamma-j}
\Gamma_j(y):=P(r_j(y)) \qquad \text{for any } j\in \{1,\dots,s'\}
\end{equation}
where $P:\R^{N}\to \R^{N-1}$ is the projection $(x',x_N)\mapsto
x'$.
We observe that $\Gamma_j^{-1}:W_j \to
\partial\Omega\cap V_j$ satisfies
$\Gamma_j^{-1}(z')=r_j^{-1}((z',g_j(z')))$ for any $z'\in W_j$.

We now report below a result taken from \cite[Lemma 7]{FeLa} which was stated, in that setting, with a $C^{1,1}$ regularity assumption on the domains $\Omega_\eps$ but we observe that the arguments used in its proof work with a $C^{0,1}$ regularity assumption and our condition \eqref{eq:assumptions} as well.

\begin{proposition} \label{p:7} \ \cite[Lemma 7]{FeLa} \ Let $\mathcal A$ be an atlas. Let
$\{\Omega_\eps\}_{0<\eps\le\eps_0}$ be a family of domains of
class $C^{0,1}(\mathcal A)$ and $\Omega$ a domain of class
$C^{0,1}(\mathcal A)$. Assume the validity of condition \eqref{eq:assumptions}.

Let $\{\omega_\eps\}_{0<\eps\le \eps_0}\subset
L^2(\partial\Omega)$ be such that ${\rm supp}(\omega_\eps)\subset
\partial\Omega \cap V_i$ for any $\eps\in (0,\eps_0]$, for some $i\in
\{1,\dots,s'\}$. Suppose that there exists $\omega\in
L^2(\partial\Omega)$ such that $\omega_\eps\to \omega$ in
$L^2(\partial\Omega)$ as $\eps\to 0$. For $j\in \{1,\dots,s'\}$
let $\Theta_{\eps,i,j}$ be as in \eqref{eq:Thetaij}.
For any $\eps\in (0,\eps_0]$ define the function
\begin{equation*}
\tilde\omega_\eps(y):=
\begin{cases}
\omega_\eps(\Theta_{\eps,i,j}(y)) & \qquad \text{if } y\in \Psi_{\eps,j}(\partial\Omega_\eps\cap V_i\cap V_j)\, , \\
0 & \qquad \text{if } y\in \partial\Omega \setminus
\Psi_{\eps,j}(\partial\Omega_\eps\cap V_i\cap V_j)  \, .
\end{cases}
\end{equation*}
Then $\tilde\omega_\eps\to \omega\chi_{\partial\Omega\cap V_i\cap
V_j}$ in $L^2(\partial\Omega)$ as $\eps\to 0$.
\end{proposition}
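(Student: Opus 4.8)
The plan is to reduce the convergence $\tilde\omega_\eps\to \omega\chi_{\partial\Omega\cap V_i\cap V_j}$ in $L^2(\partial\Omega)$ to a change-of-variables estimate on the parameter domains $W_i$ and $W_j$, exploiting the charts $\Gamma_i,\Gamma_j$ from \eqref{eq:Gamma-j}. First I would pull back everything to the flat sets $W_i$, $W_j$: writing integrals over $\partial\Omega\cap V_j$ as integrals over $W_j$ against the Jacobian weight $\sqrt{1+|\nabla_{x'}g_j|^2}$, the quantity $\|\tilde\omega_\eps-\omega\chi_{\partial\Omega\cap V_i\cap V_j}\|_{L^2(\partial\Omega)}^2$ becomes, up to bounded weights, an integral over the region $\Gamma_j(\Psi_{\eps,j}(\partial\Omega_\eps\cap V_i\cap V_j))\subset W_j$ of $|\omega_\eps\circ\Theta_{\eps,i,j} - \omega|^2$ composed with these charts, plus a contribution from the symmetric difference between that region and $\Gamma_j(\partial\Omega\cap V_i\cap V_j)$.

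Next I would handle the two contributions separately. For the symmetric-difference term, the key observation is that $\Psi_{\eps,j}$ converges to the identity uniformly (since $h_{\eps,j}\to 0$ uniformly by the first condition in \eqref{eq:assumptions} and the definition \eqref{eq:def-h}), hence $\Theta_{\eps,i,j}$ converges to the identity on its domain and $\Gamma_j(\Psi_{\eps,j}(\partial\Omega_\eps\cap V_i\cap V_j))$ converges, in measure, to $\Gamma_j(\partial\Omega\cap V_i\cap V_j)$; since $\omega\in L^2$, the integral of $\omega^2$ (and of the bounded weights) over the shrinking symmetric difference tends to $0$ by absolute continuity of the integral. For the main term, I would write $\omega_\eps\circ\Theta_{\eps,i,j}-\omega = (\omega_\eps-\omega)\circ\Theta_{\eps,i,j} + (\omega\circ\Theta_{\eps,i,j}-\omega)$. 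The first piece is controlled by $\|\omega_\eps-\omega\|_{L^2(\partial\Omega)}$ together with the uniform bound $\tfrac12\le \det D\Psi_{\eps,j}\le\tfrac32$ from \eqref{eq:stima-Jacob} (which transfers to a uniform bound on the Jacobian of $\Theta_{\eps,i,j}$ after accounting for the bounded Jacobian weights of the charts, these being uniformly bounded thanks to the $O(1)$ control of $\partial_{x_k}g_{\eps,j}$ in \eqref{eq:assumptions}), so it tends to $0$ by hypothesis. The second piece tends to $0$ by continuity of the $L^2$-shift: $\Theta_{\eps,i,j}\to\mathrm{id}$ uniformly and the Jacobians stay bounded, so $\omega\circ\Theta_{\eps,i,j}\to\omega$ in $L^2$ for the fixed $L^2$ function $\omega$ (approximate $\omega$ by a continuous function, use the uniform Jacobian bound for the error, and uniform convergence of $\Theta_{\eps,i,j}$ for the continuous part).

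The main obstacle I anticipate is the bookkeeping around the composition of three change-of-variables: the chart $\Gamma_j^{-1}$, the deformation $\Theta_{\eps,i,j}=\Psi_{\eps,i}\circ\Psi_{\eps,j}^{-1}$, and the chart $\Gamma_i$, each of which contributes a Jacobian factor, together with verifying that all these factors are uniformly bounded above and below in $\eps$. This uniformity is exactly what \eqref{eq:assumptions} buys: the $L^\infty$ convergence of $g_{\eps,j}-g_j$ forces $h_{\eps,j}\to0$ uniformly hence $\Psi_{\eps,j}\to\mathrm{id}$ in $C^0$, while the $O(1)$ bound on $\partial_{x_k}g_{\eps,j}$ keeps the tangential Jacobians $\sqrt{1+|\nabla_{x'}g_{\eps,j}|^2}$ uniformly bounded, and \eqref{eq:stima-Jacob} keeps $\det D\Psi_{\eps,j}$ pinched between $\tfrac12$ and $\tfrac32$. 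Once these uniform bounds are in place, the three limiting arguments above (absolute continuity on the symmetric difference, hypothesis $\omega_\eps\to\omega$, and $L^2$-continuity of the shift) combine to give $\tilde\omega_\eps\to\omega\chi_{\partial\Omega\cap V_i\cap V_j}$ in $L^2(\partial\Omega)$, which is the claim. As noted in the statement, the argument is identical to the one in \cite[Lemma 7]{FeLa}, simply replacing the $C^{1,1}$ hypothesis there with the $C^{0,1}$ regularity and condition \eqref{eq:assumptions} here, so I would present it concisely and refer to \cite{FeLa} for the fully detailed computation.
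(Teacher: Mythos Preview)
Your proposal is correct and follows essentially the same approach as the paper: the paper does not give an independent proof but simply refers to \cite[Lemma~7]{FeLa}, noting that the argument there carries over verbatim under the weaker $C^{0,1}$ hypothesis and condition \eqref{eq:assumptions}. Your sketch---pulling back to the parameter domains $W_i,W_j$, splitting into a symmetric-difference term and a main term, and controlling the latter via the decomposition $(\omega_\eps-\omega)\circ\Theta_{\eps,i,j}+(\omega\circ\Theta_{\eps,i,j}-\omega)$ using uniform Jacobian bounds and $L^2$-continuity of the shift---is exactly the standard argument behind that lemma, and your closing remark already acknowledges the reference.
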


We are ready to prove that the family of operators $\{E_\eps\}$ is a connecting system.

\begin{lemma} \label{l:1} Let $\mathcal A$ be an atlas. Let
$\{\Omega_\eps\}_{0<\eps\le\eps_0}$ be a family of domains of
class $C^{0,1}(\mathcal A)$, $\Omega$ a domain of class
$C^{0,1}(\mathcal A)$ and for any $\eps\in (0,\eps_0]$ let
$E_\eps$ be the map defined in \eqref{eq:E-eps}. Assume  the
validity of \eqref{eq:assumptions}-\eqref{eq:conv-per}.

Then the following assertions hold true:
\begin{itemize}
\item [(i)] the map $E_\eps:H^1(\Omega)\to H^1(\Omega_\eps)$ is continuous for any $\eps\in (0,\eps_0]$;

\item [(ii)] the family of operators $\{E_\eps\}_{0<\eps\le
\eps_0}$ satisfies \eqref{eq:conv-norm}.
\end{itemize}
\end{lemma}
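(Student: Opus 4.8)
The plan is to establish (i) and (ii) by working cuboid-by-cuboid via the partition of unity and the pull-back operators $\Psi_{\eps,j}$, and then summing the contributions. For part (i), I would fix $\eps$ and $u\in H^1(\Omega)$ and estimate $\|E_\eps u\|_\eps$. Writing $E_\eps u=\sum_{j=1}^{s'}\tilde u_{\eps,j}+\sum_{j=s'+1}^s u_j$, each summand $\tilde u_{\eps,j}$ is, on $\Omega_\eps\cap V_j$, the composition $u_j\circ\Psi_{\eps,j}$ of the compactly supported function $u_j=\psi_j u\in H^1(\Omega)$ with the bi-Lipschitz homeomorphism $\Psi_{\eps,j}$; by the chain rule for Sobolev functions under bi-Lipschitz maps, $\tilde u_{\eps,j}\in H^1(\Omega_\eps\cap V_j)$, and since $u_j$ vanishes near $\partial V_j\cap\Omega$ the zero extension to all of $\Omega_\eps$ keeps it in $H^1(\Omega_\eps)$. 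The Dirichlet-energy term is controlled using the Jacobian bound \eqref{eq:stima-Jacob} together with the uniform $L^\infty$ bound on $\nabla\Psi_{\eps,j}$ coming from the second part of \eqref{eq:assumptions} (which makes $\|\nabla h_{\eps,j}\|_{L^\infty}$ bounded); the boundary term $\int_{\partial\Omega_\eps}(\tilde u_{\eps,j})^2\,dS$ is controlled by changing variables on the boundary graph and using \eqref{eq:assumptions} to bound the surface Jacobians. Collecting all $j$ gives $\|E_\eps u\|_\eps\le C(\eps)\|u\|_0$, hence continuity. (One may also simply invoke Lemma \ref{complete} to pass between norms, since for fixed $\eps$ the spaces and norms are equivalent.)

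For part (ii) — the genuinely substantive claim — I would prove $\|E_\eps u\|_\eps^2\to\|u\|_0^2$. Split both norms according to the partition of unity and the cuboid decomposition. It is convenient to first reduce to $u$ smooth and then use density, since $C^\infty(\overline\Omega)$ is dense in $H^1(\Omega)$ in the $\|\cdot\|_0$ norm and, because of the uniform-in-$\eps$ bound from part (i) combined with the eventual convergence, a standard $3\eps$-argument transfers the limit to general $u$. For the Dirichlet part, on each $\Omega_\eps\cap V_j$ change variables $y=\Psi_{\eps,j}(x)$: $\int_{\Omega_\eps\cap V_j}|\nabla\tilde u_{\eps,j}|^2\,dx=\int_{\Omega\cap V_j}|(D\Psi_{\eps,j}^{-1})^T\nabla u_j|^2\,|\det D\Psi_{\eps,j}|^{-1}\circ\Psi_{\eps,j}^{-1}\,dy$; since $h_{\eps,j}\to 0$ uniformly and $\nabla h_{\eps,j}\to 0$ in an appropriate sense (this follows from \eqref{eq:assumptions} together with the explicit form \eqref{eq:def-h}, whose gradient involves only $g_{\eps,j}-g_j\to 0$ and bounded difference quotients), one gets $D\Psi_{\eps,j}\to I$ a.e. with a uniform bound, so dominated convergence yields convergence to $\int_{\Omega\cap V_j}|\nabla u_j|^2\,dy$. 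Summing over $j$ and using $\sum_j u_j=u$ gives $\int_{\Omega_\eps}|\nabla E_\eps u|^2\,dx\to\int_\Omega|\nabla u|^2\,dx$; the cross terms between different $u_i,u_j$ are handled the same way on $V_i\cap V_j$.

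For the boundary part, on $\partial\Omega_\eps\cap V_j$ parametrize by the graph of $g_{\eps,j}$ over $W_j$: $\int_{\partial\Omega_\eps\cap V_j}(\tilde u_{\eps,j})^2\,dS=\int_{W_j}u_j\big(r_j^{-1}(z',g_j(z'))\big)^2\sqrt{1+|\nabla_{z'}g_{\eps,j}|^2}\,dz'$, using that $\Psi_{\eps,j}$ sends the graph of $g_{\eps,j}$ to the graph of $g_j$. By the pointwise convergence of the surface Jacobians \eqref{eq:Jac} (valid under \eqref{eq:assumptions}-\eqref{eq:conv-per} by the Remark) together with the uniform bound from the second part of \eqref{eq:assumptions}, dominated convergence gives convergence to $\int_{\partial\Omega\cap V_j}u_j^2\,dS$. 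The overlap terms require care: for $i\ne j$ one must match the zero-extension in \eqref{eq:uj-tilde} against the transition maps $\Theta_{\eps,i,j}$ of \eqref{eq:Thetaij}, and here Proposition \ref{p:7} is exactly the tool that delivers the $L^2(\partial\Omega)$-convergence of the transported traces, so that $\int_{\partial\Omega_\eps}(E_\eps u)^2\,dS\to\int_{\partial\Omega}u^2\,dS$. The perimeter hypothesis \eqref{eq:conv-per} is what guarantees there is no loss of boundary mass "escaping" in the limit — i.e. it upgrades the pointwise Jacobian convergence to genuine convergence of the integrals without leaking, which is the crucial point and the main obstacle; indeed $\sqrt{1+|\nabla g_{\eps,j}|^2}$ need not converge in $L^1$ without it, and \eqref{eq:conv-per} together with Fatou forces equi-integrability. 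Combining the Dirichlet and boundary limits gives $\|E_\eps u\|_\eps^2=\int_{\Omega_\eps}|\nabla E_\eps u|^2\,dx+\int_{\partial\Omega_\eps}(E_\eps u)^2\,dS\to\int_\Omega|\nabla u|^2\,dx+\int_{\partial\Omega}u^2\,dS=\|u\|_0^2$, which is \eqref{eq:conv-norm}.
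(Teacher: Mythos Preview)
Your plan is correct and closely matches the paper's argument: partition of unity, cuboid-by-cuboid analysis, Proposition~\ref{p:7} for the overlap boundary terms, and the a.e.\ convergence \eqref{eq:Jac} together with the $L^\infty$ bound from \eqref{eq:assumptions} for the surface Jacobians. Two points are worth noting. First, the paper dispenses with your density reduction and, for the Dirichlet integrals, avoids the global change of variables plus dominated convergence: it simply splits $\Omega_\eps\cap V_i$ into $r_i^{-1}(K_{\eps,i})$, where $\Psi_{\eps,i}$ is literally the identity so $\tilde u_{\eps,i}=u_i$, and the complement, which has vanishing measure with uniformly bounded integrand by \eqref{eq:estimate-2} and \eqref{eq:stima-Jacob}. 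This is quicker and works directly for arbitrary $u\in H^1(\Omega)$. Second, your parenthetical justification for $D\Psi_{\eps,j}\to I$ is not quite right: $\nabla_{x'}h_{\eps,j}$ also contains the term $\nabla_{x'}(g_{\eps,j}-g_j)$, which under \eqref{eq:assumptions} is merely bounded, not small. The actual reason the a.e.\ convergence holds is that the support of $h_{\eps,j}$ (the strip above $\tilde g_{\eps,j}$) has vanishing measure, so a.e.\ point eventually sees the identity; the paper's $K_{\eps,i}$-splitting encodes exactly this observation and sidesteps the issue entirely.
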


\begin{proof} This lemma is essentially an adaptation of \cite[Lemma 2]{FeLa} with the obvious
changes. For this reason we only give an idea of the proof quoting
the necessary references contained in the proof of \cite[Lemma
2]{FeLa}.

Since the proof of (i) easily follows from the definition of
$E_\eps$, it is left to the reader.

It remains to show the validity of \eqref{eq:conv-norm}. Let $u\in
H^1(\Omega)$ and let $\tilde u_{\eps,j}$ be the functions
introduced in \eqref{eq:uj-tilde}.

Note that in order to prove \eqref{eq:conv-norm}, it is sufficient to prove the following convergences:
\begin{align} \label{eq:pas-lim}
\lim_{\eps\to 0} \int_{\Omega_\eps} \frac{\partial \tilde u_{\eps,i}}{\partial x_k}
\frac{\partial\tilde u_{\eps,j}}{\partial x_k}\, dx=\int_\Omega \frac{\partial u_i}{\partial x_k}
\frac{\partial u_j}{\partial x_k} \, dx \quad \forall i,j\in \{1,\dots,s'\} \, , \forall\, k\in \{1,\dots,N\};
\end{align}

\begin{align}  \label{eq:pas-lim-bis}
\lim_{\eps\to 0} \int_{\Omega_\eps} \frac{\partial \tilde u_{\eps,i}}{\partial x_k}
\frac{\partial u_{j}}{\partial x_k}\, dx=\int_\Omega \frac{\partial u_i}{\partial x_k}
\frac{\partial u_j}{\partial x_k} \, dx \quad \forall i\in \{1,\dots,s'\}, \forall j\in \{s'+1,\dots,s\}, \, \forall k\in \{1,\dots,N\};
\end{align}

\begin{align} \label{eq:pas-lim-boundary}
\lim_{\eps\to 0} \int_{\partial\Omega_\eps} \tilde u_{\eps,i}\, \tilde u_{\eps,j} \, dS=\int_{\partial\Omega} u_i\,  u_j \, dS \quad
\forall i,j\in \{1,\dots,s'\} \, , \forall\, k\in \{1,\dots,N\};
\end{align}

\begin{align}  \label{eq:pas-lim-boundary-bis}
\lim_{\eps\to 0} \int_{\partial \Omega_\eps} \tilde u_{\eps,i} \, u_{j} \, dS=\int_{\partial \Omega} u_i \,  u_j \, dS
\quad \forall i\in \{1,\dots,s'\}, \forall j\in \{s'+1,\dots,s\}, \, \forall k\in \{1,\dots,N\} \, .
\end{align}
We only give some details of the proof of \eqref{eq:pas-lim} and
\eqref{eq:pas-lim-boundary} since \eqref{eq:pas-lim-bis} and
\eqref{eq:pas-lim-boundary-bis} can be proved in a completely
equivalent way, actually easier.

We first prove \eqref{eq:pas-lim} in the case $i=j$. For any $x\in
\Omega_\eps\cap V_i$ we have
\begin{equation} \label{eq:compute}
\frac{\partial \tilde u_{\eps,i}}{\partial x_k}(x)=\sum_{l=1}^N
\frac{\partial u_i}{\partial x_l}(\Psi_{\eps,i}(x))
\frac{\partial[(\Psi_{\eps,i}(x))_l]}{\partial x_k} \, .
\end{equation}
By \eqref{eq:assumptions} and the definitions of $\Psi_{\eps,i}$,
$\Phi_{\eps,i}$ and $h_{\eps,i}$ we infer
\begin{equation} \label{eq:estimate-2}
\left\|\frac{\partial[(\Psi_{\eps,i}(x))_l]}{\partial
x_k}\right\|_{L^\infty(V_i\cap \Omega_\eps)}=O(1) \ \ \text{as }
\eps\to 0\, , \qquad \text{for any } l,k\in \{1,\dots,N\} \, .
\end{equation}
Exploiting \eqref{eq:compute}-\eqref{eq:estimate-2},
\eqref{eq:stima-Jacob} and the fact that $|r_i(\Omega\cap
V_i)\setminus K_{\eps,i}|\to 0$ as $\eps\to 0$ one gets
\begin{equation} \label{eq:pezzo-1}
\lim_{\eps\to 0} \int_{(\Omega_\eps \cap V_i)\setminus r_i^{-1}(K_{\eps,i})} \left( \frac{\partial\tilde u_{\eps,i}}{\partial x_k}\right)^2 dx=0 \, .
\end{equation}
On the other hand
\begin{equation} \label{eq:pezzo-2}
\lim_{\eps\to 0}\int_{r_i^{-1}(K_{\eps,i})}  \left( \frac{\partial\tilde u_{\eps,i}}{\partial x_k}\right)^2 dx
=\lim_{\eps\to 0} \int_{r_i^{-1}(K_{\eps,i})}  \left( \frac{\partial u_{i}}{\partial x_k}\right)^2 dx
=\int_{\Omega\cap V_i}  \left( \frac{\partial u_{i}}{\partial x_k}\right)^2 dx=\int_{\Omega}  \left( \frac{\partial u_{i}}{\partial x_k}\right)^2 dx .
\end{equation}
Combining \eqref{eq:pezzo-1} and \eqref{eq:pezzo-2} we conclude the proof of \eqref{eq:pas-lim} in the case $i=j$.

In order to prove \eqref{eq:pas-lim} in the general case $i\neq j$
one can use a suitable decomposition of the domains of
integrations like in \cite[Identity (39)]{FeLa}.

Let us proceed with the proof of \eqref{eq:pas-lim-boundary}.
Combining \eqref{eq:assumptions}-\eqref{eq:conv-per} (and hence
\eqref{eq:Jac}) with Proposition \ref{p:7}, we
obtain
\begin{align*}
& \lim_{\eps\to 0} \int_{\partial\Omega_\eps} \tilde u_{\eps,i}\, \tilde u_{\eps,j} \, dS
=\lim_{\eps\to 0} \int_{\partial\Omega_\eps\cap V_i \cap V_j} u_i(\Psi_{\eps,i}(x)) \, u_j(\Psi_{\eps,j}(x))  \, dS \\
& \qquad =\lim_{\eps\to 0} \int_{\Psi_{\eps,j}(\partial\Omega_\eps\cap V_i \cap V_j)} u_i(\Psi_{\eps,i}(\Psi_{\eps,j}^{-1}(y)))
\, u_j(y)  W_{\eps,j}(y) \, dS \\
& \qquad =\int_{\partial\Omega \cap V_i \cap V_j} u_i(y) \, u_j(y) \, dS=\int_{\partial\Omega} u_i \, u_j \, dS
\end{align*}
where we used the change of variables $y=\Psi_{\eps,j}(x)$ and
where we put
\begin{equation} \label{eq:W-eps-j}
W_{\eps,j}(y):=\frac{\sqrt{1+|\nabla_{x'}
g_{\eps,j}(\Gamma_j(y))|^2}}{\sqrt{1+|\nabla_{x'}
g_{j}(\Gamma_j(y))|^2}} \, .
\end{equation}
Here $\Gamma_j:\partial\Omega\cap V_j\to W_j\subset \R^{N-1}$ are
the maps defined in \eqref{eq:Gamma-j}.

This completes the proof of the lemma.
\end{proof}

\subsection{Minimax characterization for the eigenvalues of
\eqref{eq:Steklov}}  \label{ss:minimax}

We denote by $\{u_n\}_{n\ge 0}$ an orthonormal system of
eigenfunctions with respect to the scalar product of
$L^2(\partial\Omega)$, i.e. $\int_{\partial\Omega} u_n u_m \,
dS=\delta_{nm}$, where $u_n$ is an eigenfunction of $\lambda_n$
for any $n\ge 0$. In particular we have
\begin{equation} \label{eq:double-ortho-1}
(u_n,u_m)_0=\int_\Omega \nabla u_n \nabla u_m \, dx=\int_{\partial\Omega} u_n u_m \, dS=0
\qquad \text{for any } n,m\ge 0, \ n\neq m
\end{equation}
and
\begin{equation} \label{eq:double-ortho-2}
\int_\Omega |\nabla u_n|^2 dx=\lambda_n \, , \qquad   \int_{\partial\Omega} u_n^2 \, dS=1
\qquad \text{for any } n\ge 0 \, .
\end{equation}

Let $S:H^1(\Omega)\to H^1(\Omega)$ be the resolvent operator defined in Section \ref{s:functional-setting-B}.
We observe that $S$ is an operator which acts from $(H^1_0(\Omega))^\perp$ to itself, i.e.
$$
S:(H^1_0(\Omega))^\perp \to (H^1_0(\Omega))^\perp
$$
where orthogonality is meant with respect to the scalar product $(\cdot,\cdot)_0$.

Indeed, if we choose $w \in (H^1_0(\Omega))^\perp$, letting
$u=Sw$, we have $Tu=Jw$, i.e.
$$
_{(H^1(\Omega))'} \langle
Tu,v\rangle_{H^1(\Omega)}=_{(H^1(\Omega))'} \langle
Jw,v\rangle_{H^1(\Omega)} \qquad \text{for any } v\in H^1(\Omega)
\, .
$$
This is equivalent to
\begin{equation} \label{eq:id-sopra}
(u,v)_0=\int_{\partial\Omega} wv \, dS \qquad \text{for any } v\in
H^1(\Omega) \, .
\end{equation}
In particular, if we choose $v\in H^1_0(\Omega)$ in
\eqref{eq:id-sopra}, we obtain that the right hand side of
\eqref{eq:id-sopra} vanishes and so it does its left hand side,
thus proving that $u\in (H^1_0(\Omega))^\perp$.

The invariance property of the space $(H^1_0(\Omega))^\perp$ under
the action of the self-adjoint compact operator $S$ shows that the
basis $\{u_n\}_{n\ge 0}$ defined before, is an orthogonal basis of
$(H^1_0(\Omega))^\perp$.

A standard application of the classical minimax principle for eigenvalues allows to show that, for the
eigenvalues of \eqref{eq:Steklov}, the following characterization holds true:
\begin{equation} \label{eq:char-1}
\lambda_n=\min_{W\in \mathcal W_n} \  \max_{v\in W\setminus \{0\}}
\frac{\int_\Omega |\nabla v|^2 dx}{\int_{\partial \Omega} v^2 dS}
\end{equation}
where for any $n\ge 0$
\begin{equation} \label{eq:def-Wn}
\mathcal W_n:=\{W\subseteq (H^1_0(\Omega))^\perp \ \text{subspace}:{\rm dim}(W)=n+1\} \, .
\end{equation}

Actually, if $w\in (H^1_0(\Omega))^\perp$ and $v\in H^1_0(\Omega)$, not only we have $(w,v)_0=0$ by definition of orthogonality, but we also have
\begin{equation} \label{eq:ortho}
\int_\Omega \nabla w \nabla v \, dx=\int_{\partial\Omega} wv \, dS=0 \, .
\end{equation}

We also prove that  for any $n\ge 0$ the $n$-th eigenvalue
$\lambda_n$ admits the alternative $\inf$-$\sup$ characterization
\begin{align} \label{eq:alt-char}
\lambda_n=\inf_{V \in \mathcal V_n} \ \ \sup_{v\in V\setminus
H^1_0(\Omega)}\frac{\int_\Omega |\nabla v|^2
dx}{\int_{\partial\Omega} v^2 dS}
\end{align}
where
\begin{equation*}
\mathcal V_n:=\{V\subseteq H^1(\Omega) \ \text{subspace}: {\rm dim}(V)=n+1 \ \text{and} \ V\nsubseteq H^1_0(\Omega)\} \, .
\end{equation*}

\begin{proposition} \label{p:min-max}
The eigenvalues of \eqref{eq:Steklov} admit the variational characterization \eqref{eq:alt-char}.

Moreover we have:

\begin{itemize}
\item[(i)] if  $V\in \mathcal V_n$ satisfies $V\cap
H^1_0(\Omega)\neq \{0\}$ then
\begin{equation} \label{eq:supremum}
\sup_{v\in V\setminus H^1_0(\Omega)}\frac{\int_\Omega |\nabla v|^2
dx}{\int_{\partial\Omega} v^2 dS}=+\infty \, ;
\end{equation}

\item[(ii)] if $V\in \mathcal V_n$ satisfies $V\cap H^1_0(\Omega)=
\{0\}$ then the supremum in \eqref{eq:supremum} is finite and it
is achieved;

\medskip

\item[(iii)] the infimum in \eqref{eq:alt-char} is achieved so
that we may write
\begin{align} \label{eq:alt-char-bis}
\lambda_n=\min_{V \in \mathcal V_n} \ \sup_{v\in V\setminus
H^1_0(\Omega)}\frac{\int_\Omega |\nabla v|^2
dx}{\int_{\partial\Omega} v^2 dS} \, .
\end{align}
\end{itemize}
\end{proposition}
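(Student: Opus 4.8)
The plan is to relate the $\inf$-$\sup$ characterization \eqref{eq:alt-char} to the already-established minimax formula \eqref{eq:char-1}, and along the way to establish items (i)--(iii). First I would prove (i): if $V\in\mathcal V_n$ contains a nonzero $w\in H^1_0(\Omega)$, then for any fixed $v_0\in V\setminus H^1_0(\Omega)$ the line $v_0+tw$ lies in $V\setminus H^1_0(\Omega)$ for all $t\in\R$, and since $w$ has zero trace on $\partial\Omega$ the denominator $\int_{\partial\Omega}(v_0+tw)^2\,dS=\int_{\partial\Omega}v_0^2\,dS$ is constant and positive, while the numerator $\int_\Omega|\nabla(v_0+tw)|^2\,dx$ is a quadratic in $t$ with positive leading coefficient $\int_\Omega|\nabla w|^2\,dx>0$ (recall $w\in H^1_0(\Omega)\setminus\{0\}$ forces $\nabla w\not\equiv 0$); hence the Rayleigh quotient tends to $+\infty$ as $t\to\pm\infty$, giving \eqref{eq:supremum}.

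For (ii), assume $V\cap H^1_0(\Omega)=\{0\}$. Then the trace map restricted to $V$ is injective, so $v\mapsto\|v\|_{L^2(\partial\Omega)}$ is a norm on the finite-dimensional space $V$, equivalent to, say, $\|\cdot\|_0$. Consequently the Rayleigh quotient $v\mapsto\int_\Omega|\nabla v|^2\,dx\big/\int_{\partial\Omega}v^2\,dS$ is a continuous function on $V\setminus\{0\}$, homogeneous of degree $0$, hence attains its supremum on the compact set $\{v\in V:\|v\|_{L^2(\partial\Omega)}=1\}$; the supremum over $V\setminus H^1_0(\Omega)$ equals this value and is finite and achieved.

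Next I would prove the characterization \eqref{eq:alt-char} together with (iii). The inequality $\lambda_n\ge\inf_{V\in\mathcal V_n}\sup_{\dots}$ is immediate by taking $V=W\in\mathcal W_n$ realizing the minimum in \eqref{eq:char-1}: since $W\subseteq(H^1_0(\Omega))^\perp$ and $\dim W=n+1$, by \eqref{eq:ortho} $W$ has trivial intersection with $H^1_0(\Omega)$ (a nonzero common vector would have zero gradient and zero trace, hence be zero), so $W\in\mathcal V_n$ and $\sup_{v\in W\setminus H^1_0(\Omega)}(\cdots)=\max_{v\in W\setminus\{0\}}(\cdots)=\lambda_n$. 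For the reverse inequality, take any $V\in\mathcal V_n$; if $V\cap H^1_0(\Omega)\ne\{0\}$ the sup is $+\infty\ge\lambda_n$ by (i), so we may assume $V\cap H^1_0(\Omega)=\{0\}$. Let $\pi:H^1(\Omega)\to(H^1_0(\Omega))^\perp$ be the orthogonal projection with respect to $(\cdot,\cdot)_0$. Then $\pi|_V$ is injective: if $\pi v=0$ then $v\in V\cap H^1_0(\Omega)=\{0\}$. Hence $W:=\pi(V)\subseteq(H^1_0(\Omega))^\perp$ has dimension $n+1$, so $W\in\mathcal W_n$, and by \eqref{eq:char-1} there is $w=\pi v\in W\setminus\{0\}$ with $\int_\Omega|\nabla w|^2\,dx\ge\lambda_n\int_{\partial\Omega}w^2\,dS$. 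Writing $v=w+\varphi$ with $\varphi\in H^1_0(\Omega)$, we use \eqref{eq:ortho} (which gives $\int_\Omega\nabla w\nabla\varphi\,dx=0$ and $\int_{\partial\Omega}w\varphi\,dS=0$, the latter trivially since $\varphi$ has zero trace) to compute $\int_\Omega|\nabla v|^2\,dx=\int_\Omega|\nabla w|^2\,dx+\int_\Omega|\nabla\varphi|^2\,dx\ge\int_\Omega|\nabla w|^2\,dx$ and $\int_{\partial\Omega}v^2\,dS=\int_{\partial\Omega}w^2\,dS$; since $v\notin H^1_0(\Omega)$ forces $w\ne 0$, the Rayleigh quotient of $v$ is at least $\lambda_n$, so $\sup_{v\in V\setminus H^1_0(\Omega)}(\cdots)\ge\lambda_n$. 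Taking the infimum over $V$ gives equality, and since this infimum is attained at $V=W\in\mathcal W_n\subseteq\mathcal V_n$, item (iii) follows.

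The main obstacle is the reverse inequality in \eqref{eq:alt-char}: one must pass from an arbitrary $(n+1)$-dimensional subspace $V$ (which may have large Dirichlet energy "hidden" in an $H^1_0$-direction) to a subspace inside $(H^1_0(\Omega))^\perp$ where \eqref{eq:char-1} applies, without decreasing the supremum of the Rayleigh quotient. The orthogonal projection $\pi$ onto $(H^1_0(\Omega))^\perp$ does exactly this, and the key point making it work is the double orthogonality \eqref{eq:ortho}: the decomposition $v=w+\varphi$ is orthogonal not only in $(\cdot,\cdot)_0$ but also separately in the Dirichlet form and in the $L^2(\partial\Omega)$ form, so projecting out the $H^1_0$-component only lowers the numerator and leaves the denominator unchanged. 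Everything else is a routine compactness/continuity argument on finite-dimensional spaces.
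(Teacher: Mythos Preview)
Your proof is correct and follows essentially the same approach as the paper: both reduce (iii) to the known characterization \eqref{eq:char-1} via the orthogonal projection onto $(H^1_0(\Omega))^\perp$, using the double orthogonality \eqref{eq:ortho} to show that projecting out the $H^1_0$-component only lowers the numerator and leaves the denominator unchanged; part (ii) is the same finite-dimensional compactness argument. Your treatment of (i) is a minor variant --- you pick $w\in V\cap H^1_0(\Omega)$ directly and move along the line $v_0+tw\subset V$, whereas the paper orthogonally decomposes a fixed $v\in V$ as $v_0+v_1$ and considers $tv_0+v_1$; your version has the small advantage that the test vectors manifestly remain in $V$.
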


\begin{proof} We prove the three parts of the lemma separately.

{\bf Proof of (i).} Let $V$ be as in (i) and let $v\in V\setminus \left[H^1_0(\Omega)\cup (H^1_0(\Omega))^\perp\right]$.

Consider its orthogonal decomposition $v=v_0+v_1\in H^1_0(\Omega) \oplus (H^1_0(\Omega))^\perp$.
Since $v\not\in H^1_0(\Omega)\cup (H^1_0(\Omega))^\perp$ we clearly have that $v_0,v_1\neq 0$.
Let us use $v_t=tv_0+v_1$, $t\in (0,+\infty)$,
as a test function in the Rayleigh quotient appearing in \eqref{eq:supremum}.
By \eqref{eq:ortho} and the fact that $v_0$ has null trace on $\partial\Omega$, we have
\begin{align*}
\frac{\int_\Omega |\nabla v_t|^2 dx}{\int_{\partial\Omega} v_t^2 dS}
=\frac{t^2 \int_{\Omega}|\nabla v_0|^2 dx+\int_{\Omega}|\nabla v_1|^2 dx}{\int_{\partial\Omega} v_1^2 dS}\to +\infty
\qquad \text{as } t\to +\infty \, .
\end{align*}
This completes the proof of (i).

{\bf Proof of (ii).} First of all, if $V$ is as in (ii) we have that $V\setminus H^1_0(\Omega)=V\setminus\{0\}$. Due to the homogeneity property of the Rayleigh quotient we clearly have that
\begin{align*}
\sup_{v\in V\setminus H^1_0(\Omega)}\frac{\int_\Omega |\nabla v|^2 dx}{\int_{\partial\Omega} v^2 dS}
=\sup_{v\in V\setminus \{0\}} \frac{\int_\Omega |\nabla v|^2 dx}{\int_{\partial\Omega} v^2 dS}
=\sup_{v\in V, \|v\|_0=1}\frac{\int_\Omega |\nabla v|^2 dx}{\int_{\partial\Omega} v^2 dS}
=\max_{v\in V, \|v\|_0=1}\frac{\int_\Omega |\nabla v|^2 dx}{\int_{\partial\Omega} v^2 dS}
\end{align*}
where the last equality follows from the compactness of the unit
sphere in a finite dimensional space.

{\bf Proof of (iii).} Let $I(\mathcal W_n)$ be the minimax value
introduced in \eqref{eq:char-1} and let $I(\mathcal V_n)$ be the
inf-sup value defined in \eqref{eq:alt-char}. By (i)-(ii) we
clearly have that
\begin{equation} \label{eq:VinVn}
I(\mathcal W_n)\ge I(\mathcal V_n)=\inf_{V \in \mathcal V_n, V\cap H^1_0(\Omega)=\{0\}} \ \
\sup_{v\in V\setminus \{0\}}\frac{\int_\Omega |\nabla v|^2
dx}{\int_{\partial\Omega} v^2 dS}
\end{equation}
where the inequality above follows from the fact that $\mathcal
W_n\subset \mathcal V_n$.

On the other hand, for any $V\in \mathcal V_n$ with $V\cap
H^1_0(\Omega)=\{0\}$, let us consider the orthogonal projections
$P:V\to P(V)\subset H^1_0(\Omega)$ and $Q:V\to Q(V)\subset
(H^1_0(\Omega))^\perp$ so that
\begin{equation} \label{eq:decomposition}
v=Pv+Qv  \in H^1_0(\Omega)\oplus (H^1_0(\Omega))^\perp  \qquad \text{for any } v\in V \, .
\end{equation}
We claim that ${\rm dim}(Q(V))={\rm dim}(V)=n+1$. By definition we
have that the linear map $Q$ is surjective. Let us prove that it
is also injective. By contradiction suppose that $Q$ is not
injective so that, being $Q$ linear, there exists $v\in V\setminus
\{0\}$ such that $Qv=0$ which inserted in \eqref{eq:decomposition}
shows that $v=Pv\in H^1_0(\Omega)$ and this is in contradiction
with the fact that $V\cap H^1_0(\Omega)=\{0\}$. Being $Q$ an
isomorphism between vector spaces the proof of the claim follows.

For any $v\in V\setminus \{0\}$ with $V\cap H^1_0(\Omega)=\{0\}$,
let us put $v_0=Pv$ and $v_1=Qv$ in such a way that $v_1\neq 0$.
With this choice of $V$ and $v$, by \eqref{eq:ortho} we obtain
\begin{align*}
\frac{\int_\Omega |\nabla v|^2 dx}{\int_{\partial\Omega} v^2 dS}
=\frac{\int_\Omega |\nabla v_0|^2 dx+\int_\Omega |\nabla v_1|^2 dx}{\int_{\partial\Omega} v_1^2 dS}
\ge \frac{\int_\Omega |\nabla v_1|^2 dx}{\int_{\partial\Omega} v_1^2 dS}
\end{align*}
so that
\begin{align} \label{eq:ultimopasso}
\max_{v \in V\setminus\{0\}}\frac{\int_\Omega |\nabla v|^2 dx}{\int_{\partial\Omega} v^2 dS}
\ge \max_{w \in Q(V)\setminus\{0\}}\frac{\int_\Omega |\nabla w|^2 dx}{\int_{\partial\Omega} w^2 dS}\ge I(\mathcal W_n)
\end{align}
since $Q(V)\in \mathcal W_n$ being $Q(V)\subset (H^1_0(\Omega))^\perp$ and ${\rm dim}(Q(V))=n+1$.

Since \eqref{eq:ultimopasso} holds for any $V\in \mathcal V_n$
such that $V\cap H^1_0(\Omega)=\{0\}$, taking the infimum over $V$
in \eqref{eq:ultimopasso}, by the equality in the right hands side
of \eqref{eq:VinVn}, we conclude that $I(\mathcal V_n)\ge
I(\mathcal W_n)$. This combined with the inequality in left hand
side of \eqref{eq:VinVn} and with \eqref{eq:char-1} proves that
$I(\mathcal V_n)=I(\mathcal W_n)=\lambda_n$.

Finally, the fact that the infimum in \eqref{eq:alt-char} is
achieved follows from $I(\mathcal V_n)=\lambda_n$ combined with
the particular choice $V={\rm span}\{u_0,\dots,u_n\}\in \mathcal
V_n$: indeed, in this way the inequality
\begin{align*}  
\max_{v\in {\rm span}\{u_0,\dots,u_n\}\setminus\{0\}}
\ \frac{\int_\Omega |\nabla v|^2 dx}{\int_{\partial\Omega} v^2 dS}\le \lambda_n
\end{align*}
becomes an equality if one chooses $v=u_n$ and, in turn, the
above maximum achieves the minimum in \eqref{eq:alt-char}.
\end{proof}

\section{Main results} \label{s:main-results}


We start with the following  result on spectral convergence for problem \eqref{eq:Steklov}:

\begin{theorem} \label{t:main-1}
Let $\mathcal A$ be an atlas. Let $\{\Omega_\eps\}_{0<\eps\le
\eps_0}$ be a family of domains of class $C^{0,1}(\mathcal A)$ and
let $\Omega$ be a domain of class $C^{0,1}(\mathcal A)$. Assume
the validity of conditions
\eqref{eq:assumptions}-\eqref{eq:conv-per}. Then $S_{\eps} \CC  S$
with respect to the operators $E_{\eps}$ defined in
\eqref{eq:E-eps}. In particular, the spectrum of
\eqref{eq:Steklov} behaves continuously at $\eps=0$ in the sense
of Theorem \ref{vaithm}.
\end{theorem}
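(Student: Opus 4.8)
The plan is to verify the hypotheses of the abstract spectral convergence machinery, namely to show that $S_\eps \CC S$ in the sense of Definition \ref{d:CC-convergence-ArCaLo}, and then invoke Theorem \ref{vaithm}. Recall that $S_\eps = T_\eps^{-1}\circ J_\eps$ where $T_\eps$ implements the full scalar product $(\cdot,\cdot)_\eps$ and $J_\eps$ implements the boundary bilinear form $\int_{\partial\Omega_\eps} uv\, dS$. Throughout I would work with the spaces $H^1(\Omega_\eps)$ equipped with the $(\cdot,\cdot)_\eps$ scalar product, so that the $E_\eps$ form a connecting system by Lemma \ref{l:1}. There are two things to prove: (a) the $EE$-convergence $S_\eps \EEC S$, i.e.\ $u_\eps \EC u$ implies $S_\eps u_\eps \EC S u$; and (b) the compactness/precompactness property: if $\|u_\eps\|_\eps = 1$ then $\{S_\eps u_\eps\}$ is precompact. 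Since all operators are non-negative, compact and self-adjoint (as established in Section \ref{s:functional-setting-B}), these two facts together give $S_\eps \CC S$, and hence parts (i)--(iii) of Theorem \ref{vaithm}.

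For part (a), suppose $u_\eps \EC u$, i.e.\ $\|u_\eps - E_\eps u\|_\eps \to 0$. Writing $w_\eps = S_\eps u_\eps \in H^1(\Omega_\eps)$ and $w = Su \in H^1(\Omega)$, the defining identities are
\begin{equation*}
(w_\eps, \varphi)_\eps = \int_{\partial\Omega_\eps} u_\eps\, \varphi\, dS \quad \forall \varphi \in H^1(\Omega_\eps), \qquad (w,\psi)_0 = \int_{\partial\Omega} u\, \psi\, dS \quad \forall \psi \in H^1(\Omega).
\end{equation*}
First, taking $\varphi = w_\eps$ shows $\|w_\eps\|_\eps^2 \le \|u_\eps\|_{L^2(\partial\Omega_\eps)}\|w_\eps\|_{L^2(\partial\Omega_\eps)} \le C\|u_\eps\|_\eps \|w_\eps\|_\eps$ by Lemma \ref{complete}, so $\{w_\eps\}$ is bounded; by part (b) it is precompact, so along a subsequence $w_\eps \EC \tilde w$ for some $\tilde w \in H^1(\Omega)$. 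The key step is to identify $\tilde w = w$: one tests the $\eps$-equation with $\varphi = E_\eps \psi$ for an arbitrary $\psi \in H^1(\Omega)$, obtaining $(w_\eps, E_\eps\psi)_\eps = \int_{\partial\Omega_\eps} u_\eps\, (E_\eps\psi)\, dS$. Passing to the limit, the left-hand side tends to $(\tilde w, \psi)_0$ (this is where one uses that $w_\eps \EC \tilde w$, $E_\eps\psi \EC \psi$, and the polarization/convergence of the $(\cdot,\cdot)_\eps$ bilinear forms along $E$-convergent sequences, exactly as in Lemma \ref{l:1}), and the right-hand side tends to $\int_{\partial\Omega} u\, \psi\, dS$ (using $u_\eps \EC u$, $E_\eps\psi \EC \psi$, and Proposition \ref{p:7} together with \eqref{eq:Jac} to control the boundary integrals via the change of variables $\Psi_{\eps,j}$ and the weight $W_{\eps,j}$ in \eqref{eq:W-eps-j}). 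Since $\psi$ is arbitrary this gives $\tilde w = Su = w$; as the limit is independent of the subsequence, the full family $E$-converges and $S_\eps \EEC S$. A separate small point is that $E$-convergence requires norm convergence $\|w_\eps\|_\eps \to \|w\|_0$, not just weak-type convergence; one gets this by taking $\varphi = w_\eps$ in the $\eps$-equation and $\psi = w$ in the limit equation, yielding $\|w_\eps\|_\eps^2 = \int_{\partial\Omega_\eps} u_\eps w_\eps\, dS \to \int_{\partial\Omega} u w\, dS = \|w\|_0^2$, where the convergence of the boundary integral again uses Proposition \ref{p:7}.

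For part (b), the precompactness of $\{S_\eps u_\eps\}$ when $\|u_\eps\|_\eps = 1$ is the heart of the compact-convergence statement and I expect it to be the main obstacle. The strategy: set $w_\eps = S_\eps u_\eps$; as above $\|w_\eps\|_\eps \le C$. One wants to produce a limit $w \in H^1(\Omega)$ with $w_\eps \EC w$ along a subsequence. The natural route is to pull back $w_\eps$ by the diffeomorphisms $\Psi_{\eps,j}$ to functions on $\Omega$ (using the partition of unity to glue, inverting the construction of $E_\eps$), obtaining a bounded family in $H^1(\Omega)$; by Rellich there is a subsequence converging weakly in $H^1(\Omega)$ and strongly in $L^2(\Omega)$ and in $L^2(\partial\Omega)$ (compactness of the trace). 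One then has to check that this convergence, transported back, actually yields $E$-convergence $w_\eps \EC w$ with the correct norm. The genuine subtlety is that $w_\eps$ is not of the form $E_\eps(\text{something})$ a priori and is merely bounded in $H^1$, so to upgrade weak convergence to the norm convergence $\|w_\eps - E_\eps w\|_\eps \to 0$ one must exploit the special structure of $S_\eps$: $w_\eps$ solves the harmonic-type equation $(w_\eps,\varphi)_\eps = \int_{\partial\Omega_\eps} u_\eps\varphi\,dS$, which after restricting to the ``good'' interior region $r_i^{-1}(K_{\eps,i})$ where $\Psi_{\eps,j}$ is the identity gives strong local $H^1$ control, while on the thin boundary layer $(\Omega_\eps\cup\Omega)\setminus K_\eps$ — whose measure tends to zero by construction, and whose perimeter is controlled by \eqref{eq:conv-per} — one estimates the Dirichlet and boundary contributions directly, showing they are asymptotically negligible in the difference $w_\eps - E_\eps w$. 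This is precisely the type of argument carried out in \cite{FeLa} for the biharmonic Steklov problem, and the present setting requires redoing it with the $C^{0,1}$ atlas, the Jacobian bounds \eqref{eq:stima-Jacob}, \eqref{eq:estimate-2}, and the perimeter convergence \eqref{eq:conv-per} in place of $C^{1,1}$ regularity. Once both (a) and (b) are in hand, $S_\eps \CC S$ follows, and Theorem \ref{vaithm} delivers the stated continuity of the Steklov spectrum at $\eps = 0$, including $E$-convergence of eigenfunctions and hence the $H^1$-strong convergence on $\Omega_\eps \cap \Omega$ recorded in Proposition \ref{intersec}.
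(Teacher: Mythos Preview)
Your proposal is correct and follows essentially the same route as the paper: the paper introduces an explicit pull-back operator $B_\eps:H^1(\Omega_\eps)\to H^1(\Omega)$ (exactly the gluing-via-$\Psi_{\eps,j}^{-1}$ construction you describe in part (b)), proves $B_\eps(S_\eps w_\eps)\rightharpoonup Sw$ weakly in $H^1(\Omega)$ by testing the equation against $E_\eps\varphi$ and using Proposition~\ref{p:7} together with \eqref{eq:Jac} (Lemma~\ref{l:weak-convergence}), upgrades this to $E$-convergence via the equation (Lemma~\ref{l:E-conv}), and then reruns the argument with data that is merely bounded to obtain precompactness (Lemma~\ref{l:compact}). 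Your reorganization --- proving precompactness first and invoking it inside the $EE$-convergence step --- is a legitimate shortcut.

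One clarification on your sketch of part (b): the upgrade from weak convergence of $B_\eps w_\eps$ to genuine $E$-convergence of $w_\eps$ does \emph{not} come from ``strong local $H^1$ control on $K_\eps$'' plus direct smallness of Dirichlet contributions on the thin layer --- weak $H^1$ convergence alone gives neither, and the gradient of the solution on the layer is not a priori small. The mechanism the paper uses is exactly the one you already wrote down in part (a): the identity $\|w_\eps\|_\eps^2 = \int_{\partial\Omega_\eps} u_\eps w_\eps\,dS$, combined with compactness of the trace applied to the pulled-back data \emph{and} pulled-back solution, yields $\|w_\eps\|_\eps^2\to\|\tilde w\|_0^2$ and $(w_\eps,E_\eps\tilde w)_\eps\to\|\tilde w\|_0^2$ directly, after which the expansion $\|w_\eps-E_\eps\tilde w\|_\eps^2 = \|w_\eps\|_\eps^2 - 2(w_\eps,E_\eps\tilde w)_\eps + \|E_\eps\tilde w\|_\eps^2$ closes. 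The $K_\eps$ versus boundary-layer splitting is used only to pass to the limit in the cross term $(w_\eps,E_\eps\varphi)_\eps$, where the fixed test function $\varphi$ makes the layer contribution small.
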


In the next theorem we relax assumptions of Theorem \ref{t:main-1}
by replacing condition \eqref{eq:conv-per} with
\begin{equation} \label{eq:weak-L1}
\sqrt{1+|\nabla_{x'}g_{\eps,j}|^2}\rightharpoonup \gamma_j \qquad
\text{weakly in } L^1(W_j) \quad \text{as } \eps \to 0
\end{equation}
where $\gamma_j\in L^1(W_j)$ for any $j=1,\dots,s'$.

Looking at \eqref{eq:weak-L1}, it seems reasonable to define the
function $\gamma:\partial\Omega\to \R$, locally given by
\begin{equation} \label{eq:gamma}
\gamma(y)=\frac{\gamma_j(\Gamma_j(y))}{\sqrt{1+|\nabla_{x'}g_{j}(\Gamma_j(y))|^2}}
\qquad \text{for any } y\in V_j\cap
\partial\Omega
\end{equation}
where $\Gamma_j$ is the map defined in \eqref{eq:Gamma-j}, for any
$j=1,\dots,s'$.

We observe that, arguing as in \cite[Lemma 5.1, Corollary
5.1]{ArBru}, one can deduce that the function $\gamma$ is well
defined being its definition not depending on local charts.

We now introduce the following weighted Steklov problem
\begin{equation} \label{eq:Steklov-weighted}
\begin{cases}
\Delta u=0, & \qquad \text{in } \Omega \, , \\
u_{\nu }=\lambda \, \gamma(x) u, & \qquad \text{on }
\partial\Omega \, .
\end{cases}
\end{equation}
Let us denote by
\begin{equation} \label{eq:spectrum-weighted}
0=\lambda_0(\gamma)<\lambda_1(\gamma)\le \lambda_2(\gamma) \le
\dots \le \lambda_n(\gamma)\le \dots
\end{equation}
the eigenvalues of \eqref{eq:Steklov-weighted}.

We now define on the space $H^1(\Omega)$ the scalar product
\begin{equation} \label{eq:prod-gamma}
(u,v)_{\gamma}:=\int_\Omega \nabla u \nabla v \,
dx+\int_{\partial\Omega} \gamma \, uv\, dS \qquad \text{for any }
u,v\in H^1(\Omega)
\end{equation}
and the corresponding norm
\begin{equation} \label{eq:norm-gamma}
\|u\|_{\gamma}:=(u,u)_{\gamma}^{1/2} \qquad \text{for any } u\in
H^1(\Omega) \, .
\end{equation}

We observe that the boundary integral in \eqref{eq:prod-gamma} is
well defined since by \eqref{eq:assumptions} and
\eqref{eq:weak-L1} we deduce that $\gamma\in L^{\infty}(\partial\Omega)$.

We observe that the norm \eqref{eq:norm-gamma} is equivalent to
the usual norm of $H^1(\Omega)$: indeed, for one estimate we can combine the fact that $\gamma\in L^\infty(\partial\Omega)$
with the classical trace inequality and for the
other one Lemma 2.7 (ii) and the fact that $\gamma\ge 1$, as one
can verify looking at \cite[Corollary 5.1]{ArBru}.

We now construct the operators $T_\gamma$, $J_\gamma$ simply by
replacing the boundary integrals in \eqref{eq:def-T} and
\eqref{eq:def-J} by $\int_{\partial\Omega} \gamma \, uv \, dS$.
The operator $S_\gamma$ is again defined by $T_\gamma^{-1}\circ
J_\gamma$.

The family of operators $E_\eps:H^1(\Omega)\to H^1(\Omega_\eps)$
can be defined exactly as in Section \ref{subsecoperatorsE}.

It can be proved that
\begin{equation} \label{eq:conv-norm-weighted}
\|E_\eps u\|_\eps \to \|u\|_\gamma \qquad \text{as } \eps\to 0 \,
, \qquad \text{for any } u \in H^1(\Omega)
\end{equation}
thus showing that the family of operators $\{E_\eps\}$ is still a
connecting system in the sense of Section
\ref{ss:convergenceint} provided that $H^1(\Omega)$ is endowed
with the norm $\|\cdot\|_\gamma$.

The proof of \eqref{eq:conv-norm-weighted} may be obtained by
proceeding exactly as in the proof of Lemma \ref{l:1} with the
only difference that in the concluding part of the proof of the
lemma, the function $W_{\eps,j}$ weakly converges in
$L^p(\partial\Omega\cap V_i \cap V_j)$ to the function
$\frac{\gamma_j}{\sqrt{1+|\nabla_{x'}g_\eps|^2}}\circ \Gamma_j$
for any $1\le p<\infty$, as a consequence of \cite[Remark 2.1]{ArBru}.

We are ready to state the following result.

\begin{theorem} \label{t:main-3}
Let $\mathcal A$ be an atlas. Let $\{\Omega_\eps\}_{0<\eps\le
\eps_0}$ be a family of domains of class $C^{0,1}(\mathcal A)$ and
let $\Omega$ be a domain of class $C^{0,1}(\mathcal A)$. Assume
the validity of conditions \eqref{eq:assumptions} and
\eqref{eq:weak-L1}.

Then $S_{\eps} \CC  S_\gamma$ with respect to the operators
$E_{\eps}$ defined in \eqref{eq:E-eps}. In particular, for any
$n\ge 1$ we have that
\begin{equation*}
\lambda_n^\eps\to \lambda_n(\gamma)  \qquad \text{as } \eps\to 0
\end{equation*}
where, according with the notation used in the Introduction, by
$\lambda_n^\eps$ we mean the eigenvalues of the Steklov problem in
$\Omega_\eps$ and by $\lambda_n(\gamma)$ the eigenvalues defined
in \eqref{eq:spectrum-weighted}.
\end{theorem}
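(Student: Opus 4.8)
The plan is to mimic the proof strategy of Theorem~\ref{t:main-1} (which in turn follows \cite{FeLa}), adapting the key estimates to the weaker assumption \eqref{eq:weak-L1}. The goal is to establish $S_\eps \CC S_\gamma$ in the sense of Definition~\ref{d:CC-convergence-ArCaLo}, after which the convergence of eigenvalues $\lambda_n^\eps \to \lambda_n(\gamma)$ follows immediately from Theorem~\ref{vaithm} together with the correspondence $\mu \leftrightarrow \frac{1}{\mu}-1$ between the spectrum of $S_\eps$ (resp. $S_\gamma$) and the Steklov spectrum of $\Omega_\eps$ (resp. the weighted problem \eqref{eq:Steklov-weighted}). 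The first ingredient is that $\{E_\eps\}$ is a connecting system when $H^1(\Omega)$ carries the norm $\|\cdot\|_\gamma$: this is the convergence \eqref{eq:conv-norm-weighted}, whose proof is indicated in the excerpt as a repetition of the proof of Lemma~\ref{l:1}, the only change being that the Jacobian factor $W_{\eps,j}$ of \eqref{eq:W-eps-j} now converges only weakly in $L^p(\partial\Omega\cap V_i\cap V_j)$ (any $1\le p<\infty$) to $\frac{\gamma_j}{\sqrt{1+|\nabla_{x'}g_j|^2}}\circ\Gamma_j$, by \cite[Remark~2.1]{ArBru}; pairing this weak limit against the (fixed, hence strongly convergent after the change of variables) product $u_i(\Psi_{\eps,i}(\Psi_{\eps,j}^{-1}(\cdot)))\,u_j(\cdot)$ yields exactly the boundary integral $\int_{\partial\Omega}\gamma\,u_iu_j\,dS$ defining $\|\cdot\|_\gamma$.

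Next I would prove the two halves of compact convergence. For $E$-compact convergence ($S_\eps\EEC S_\gamma$): take $u_\eps\EC u$, so $\|u_\eps-E_\eps u\|_\eps\to 0$, and set $w_\eps:=S_\eps u_\eps\in H^1(\Omega_\eps)$, $w:=S_\gamma u\in H^1(\Omega)$. By definition $w_\eps$ solves $(w_\eps,v)_\eps=\int_{\partial\Omega_\eps}u_\eps v\,dS$ for all $v\in H^1(\Omega_\eps)$, and $w$ solves $(w,v)_\gamma=\int_{\partial\Omega}\gamma\,uv\,dS$ for all $v\in H^1(\Omega)$. One first gets a uniform bound $\|w_\eps\|_\eps\le C$ (test with $v=w_\eps$, use the trace inequality of Lemma~\ref{complete}(i) with uniform constant over the atlas class, which is legitimate by \eqref{eq:assumptions}). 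Then one shows $w_\eps\EC w$, i.e. $\|w_\eps-E_\eps w\|_\eps\to 0$. The standard route: extract a subsequence along which $w_\eps\EC \tilde w$ for some $\tilde w\in H^1(\Omega)$ (precompactness, proved below), pass to the limit in the variational identity for $w_\eps$ using test functions of the form $E_\eps\varphi$ with $\varphi\in H^1(\Omega)$ — here the left side converges to $(\tilde w,\varphi)_\gamma$ by \eqref{eq:conv-norm-weighted} (polarized) and the right side $\int_{\partial\Omega_\eps}u_\eps E_\eps\varphi\,dS$ converges to $\int_{\partial\Omega}\gamma\,u\varphi\,dS$ by the same weak-Jacobian argument combined with $u_\eps\EC u$ — so $\tilde w$ solves the same problem as $w$, hence $\tilde w=w$ by uniqueness; a subsequence argument gives convergence of the full family, and upgrading $E$-weak to $E$-strong convergence uses that $\|w_\eps\|_\eps\to\|w\|_\gamma$ (test the identity for $w_\eps$ with $v=w_\eps$ and pass to the limit, noting $\int_{\partial\Omega_\eps}u_\eps w_\eps\,dS\to\int_{\partial\Omega}\gamma\,uw\,dS$ since $w_\eps\EC w$ and $u_\eps\EC u$), together with the parallelogram/norm-convergence trick in a Hilbert space.

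For precompactness of $\{S_\eps u_\eps\}$ whenever $\|u_\eps\|_\eps=1$: from $\|S_\eps u_\eps\|_\eps\le C$ one has, after transplanting by the diffeomorphisms $\Psi_{\eps,j}$ (whose Jacobians are uniformly bounded above and below by \eqref{eq:stima-Jacob}), a bounded sequence in $H^1(\Omega)$; extract a weakly convergent subsequence, and use the compactness of the trace map $H^1(\Omega)\to L^2(\partial\Omega)$ (Lemma~\ref{complete} region, \cite{Necas}) together with Proposition~\ref{intersec}-type control — i.e. the fact that $E_\eps$ is the identity on the sets $K_{\eps,j}$ whose complements have measure $\to 0$ — to promote weak $H^1$ convergence to $E$-convergence in the $\|\cdot\|_\eps$ norm (the gradient term converges because the boundary term does, via the variational identity, exactly as in the norm-upgrade step). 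I expect the main obstacle to be precisely this interplay between the weak-only convergence of the boundary Jacobian $W_{\eps,j}$ and the need for strong ($E$-)convergence of the transplanted functions: one must be careful that the products appearing in the boundary integrals consist of one genuinely convergent (transplanted fixed function, or $E_\eps\varphi$) factor against the weakly convergent Jacobian, never two merely-weakly-convergent factors, so that the limits identify with the $\gamma$-weighted integrals rather than with something involving $\gamma^2$; this is the conceptual point that makes the limit problem the weighted Steklov problem \eqref{eq:Steklov-weighted} and not the unweighted one, and it is where the proof genuinely diverges from that of Theorem~\ref{t:main-1}. The remaining verifications — self-adjointness, nonnegativity and compactness of $S_\gamma$, equivalence of $\|\cdot\|_\gamma$ with the standard $H^1$ norm (already noted in the excerpt using $\gamma\ge 1$ and Lemma~\ref{complete}(ii)) — are routine.
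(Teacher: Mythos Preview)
Your proposal is correct and follows essentially the same route as the paper's proof in Section~\ref{s:p-t:main-3}: adapt Lemmas~\ref{l:weak-convergence}--\ref{l:compact} by replacing the a.e.\ convergence of the Jacobian factor $W_{\eps,j}$ with its weak $L^p$ convergence (all $1\le p<\infty$) to $\gamma\,\chi_{\partial\Omega\cap V_j}$, then identify the weak limit of $B_\eps u_\eps$ via the variational identity and upgrade to $E$-convergence by the norm trick. The one refinement worth flagging is that the boundary integrals in the precompactness step are \emph{triple} products (two transplanted functions times $W_{\eps,j}$), and the paper handles this by using compactness of the trace $H^1(\Omega)\hookrightarrow L^q(\partial\Omega)$ for all $q<\tfrac{2(N-1)}{N-2}$ rather than just $q=2$, so that the product of the two function factors converges strongly in some $L^{p'}$ with $p'>1$ and pairs directly against the weakly $L^p$-convergent Jacobian; your $L^2$-only version can still be made to work via weak-$*$ $L^\infty$ convergence of $W_{\eps,j}$ (it is uniformly bounded by \eqref{eq:assumptions}), but the higher-$q$ formulation keeps the H\"older bookkeeping explicit.
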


We observe that Theorem \ref{t:main-3} becomes an instability
result whenever $\gamma\not\equiv 1$ on $\partial\Omega$.

We now state another instability result in which we consider a
family of domains $\{\Omega_\eps\}_{0<\eps\le \eps_0}$ of class
$C^{0,1}(\mathcal A)$ and a fixed domain of class
$C^{0,1}(\mathcal A)$. We assume that the corresponding functions
$g_{\eps,j}$ satisfy
\begin{equation} \label{eq:th-main-2-0}
\lim_{\eps\to 0} \|g_{\eps,j}-g_j\|_{L^\infty(W_j)}=0
\end{equation}
but differently from the statements of Theorem \ref{t:main-1} and Thorem \ref{t:main-3}, we assume the following blow up condition of the surface element:
\begin{equation}  \label{eq:th-main-2}
\lim_{\eps \to 0} \mathcal H^{N-1}\left(\left\{x'\in W_j:\sqrt{1+|\nabla_{x'} \, g_{\eps,j}(x')|^2}\le t\right\}\right)=0  \qquad \text{for any } t>0 \, ,
\end{equation}
where we have denoted by $\mathcal H^{N-1}$ the $(N-1)$-dimensional Lebesgue measure.

\begin{theorem} \label{t:main-2} Let $\mathcal A$ be an atlas. Let $\{\Omega_\eps\}_{0<\eps\le
\eps_0}$ be a family of domains of class $C^{0,1}(\mathcal A)$ and
let $\Omega$ be a domain of class $C^{0,1}(\mathcal A)$. Assume that \eqref{eq:th-main-2-0}
and \eqref{eq:th-main-2} hold true.
Then for any $n\ge 1$ we have that
\begin{equation*}
\lambda_n^\eps\to 0  \qquad \text{as } \eps\to 0
\end{equation*}
where according with the notation used in the Introduction, $\lambda_n^\eps$
denote the eigenvalues of the Steklov problem in $\Omega_\eps$.
\end{theorem}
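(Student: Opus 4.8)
The plan is to argue via the $\inf$-$\sup$ characterization \eqref{eq:alt-char} applied to $\Omega_\eps$: since $\lambda_n^\eps\ge 0$, it is enough to build, for each small $\eps$, an $(n+1)$-dimensional subspace $V_\eps\subseteq H^1(\Omega_\eps)$, not contained in $H^1_0(\Omega_\eps)$, on which the Rayleigh quotient $\int_{\Omega_\eps}|\nabla v|^2\,dx\big/\int_{\partial\Omega_\eps}v^2\,dS$ is bounded above by a quantity that is $o(1)$ as $\eps\to 0$. The role of hypothesis \eqref{eq:th-main-2} is precisely that the boundary of $\Omega_\eps$ becomes so oscillatory that one can produce trial functions with bounded Dirichlet energy but arbitrarily large boundary $L^2$-norm.

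First I would fix one boundary chart $V_j$, $j\in\{1,\dots,s'\}$ (a nonempty index set since $\partial\Omega\ne\emptyset$), and work in the rotated coordinates $r_j$ in which $\Omega_\eps\cap V_j$ is the subgraph of $g_{\eps,j}$ over $W_j$ and $\partial\Omega_\eps\cap V_j$ carries the surface element $\sqrt{1+|\nabla_{x'}g_{\eps,j}|^2}\,dx'$. Pick $n+1$ pairwise disjoint nonempty open sets $A_0,\dots,A_n$ with $\overline{A_i}\subset W_j$, nontrivial functions $\varphi_i\in C^{0,1}_c(A_i)$, and a Lipschitz cut-off $\eta\colon[a_{Nj},b_{Nj}]\to[0,1]$ with $\eta\equiv 0$ near $a_{Nj}$ and $\eta\equiv 1$ on $[a_{Nj}+\rho,b_{Nj}]$. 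Define $v_i^\eps\in H^1(\Omega_\eps)$ to equal $\varphi_i(x')\eta(x_N)$ in these coordinates on $\Omega_\eps\cap V_j$ and $0$ elsewhere; the zero extension is admissible because $\varphi_i$ and $\eta$ vanish near the relevant internal faces of the box, and because $g_{\eps,j}\ge a_{Nj}+\rho$ the trace of $v_i^\eps$ on $\partial\Omega_\eps$ is exactly $\varphi_i(x')$ on the graph over $A_i$ and $0$ elsewhere, so in particular $v_i^\eps\notin H^1_0(\Omega_\eps)$.

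Then I would record two estimates. Since $r_j$ is an isometry and $a_{Nj}<g_{\eps,j}<b_{Nj}$, one gets $\int_{\Omega_\eps}|\nabla v_i^\eps|^2\,dx\le M_i$ with $M_i$ depending only on $\|\varphi_i\|_{C^{0,1}}$, $\|\eta'\|_\infty$ and the fixed box, hence independent of $\eps$. On the other hand $\int_{\partial\Omega_\eps}(v_i^\eps)^2\,dS=\int_{A_i}\varphi_i^2\sqrt{1+|\nabla_{x'}g_{\eps,j}|^2}\,dx'=:B_i^\eps$, and the key claim is $B_i^\eps\to+\infty$. This is where \eqref{eq:th-main-2} is used: for fixed $t>0$ the set $\{x'\in A_i:\sqrt{1+|\nabla_{x'}g_{\eps,j}|^2}\le t\}$ has $(N-1)$-measure $o(1)$, hence $B_i^\eps\ge t\bigl(\int_{A_i}\varphi_i^2\,dx'-\|\varphi_i\|_\infty^2\,o(1)\bigr)$, which yields $\liminf_{\eps\to 0}B_i^\eps\ge t\int_{A_i}\varphi_i^2\,dx'$ for every $t$, i.e. $B_i^\eps\to+\infty$ because $\int_{A_i}\varphi_i^2\,dx'>0$.

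Finally, set $V_\eps:=\mathrm{span}\{v_0^\eps,\dots,v_n^\eps\}$. The $v_i^\eps$ have pairwise disjoint supports, so $\dim V_\eps=n+1$, $V_\eps\not\subseteq H^1_0(\Omega_\eps)$, and for $v=\sum_ic_iv_i^\eps\ne 0$ disjointness kills all cross terms, giving $\int_{\Omega_\eps}|\nabla v|^2\,dx\le(\max_iM_i)\sum_ic_i^2$ and $\int_{\partial\Omega_\eps}v^2\,dS\ge(\min_iB_i^\eps)\sum_ic_i^2>0$; moreover $v$ has trace $\sum_ic_i\varphi_i\not\equiv 0$, hence $v\notin H^1_0(\Omega_\eps)$. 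By \eqref{eq:alt-char} applied to $\Omega_\eps$ this gives $0\le\lambda_n^\eps\le(\max_iM_i)/(\min_iB_i^\eps)\to 0$. The main obstacle is the degeneracy step in the previous paragraph — extracting from \eqref{eq:th-main-2} that the surface density blows up uniformly enough to dominate a fixed positive weight $\varphi_i^2$ on a set of positive measure; everything else is the usual local-chart bookkeeping of the kind already developed in \cite{FeLa}, with the simplification that here the trial functions are written down directly in a single chart and neither the connecting operators $E_\eps$ nor condition \eqref{eq:th-main-2-0} (beyond guaranteeing $\Omega_\eps\in C^{0,1}(\mathcal A)$) play any role.
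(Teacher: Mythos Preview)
Your argument is correct, and it is genuinely different from the paper's proof. You work in a single chart and build $n+1$ trial functions with pairwise disjoint $x'$-supports, so that the Rayleigh quotient on $V_\eps$ is controlled by the \emph{worst} individual ratio $M_i/B_i^\eps$; the divergence $B_i^\eps\to+\infty$ is then read off directly from \eqref{eq:th-main-2} by the elementary splitting you describe. In particular your proof never invokes the limit domain $\Omega$ and does not use \eqref{eq:th-main-2-0}. The paper proceeds more indirectly: it uses global smooth functions $\varphi_k(x)=x_1^k$ (Lemma~\ref{l:l-i}), which are linearly independent on the boundary of \emph{any} bounded domain, and then argues by contradiction that the boundary $L^2$-mass of the maximiser in $V_\eps$ must blow up; the contradiction is obtained via Lemma~\ref{l:vanishing}, which says that if the boundary mass stayed bounded the weak $H^1(\Omega)$-limit would land in $H^1_0(\Omega)$, and it is in that lemma (through the estimate \eqref{eq:intermedio}) that \eqref{eq:th-main-2-0} is actually used. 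Your route is shorter and uses fewer hypotheses; the paper's route has the advantage that Lemma~\ref{l:vanishing} isolates a degeneration mechanism (``bounded boundary mass forces Dirichlet limit'') of independent interest, and the polynomial test functions of Lemma~\ref{l:l-i} require no chart-by-chart construction and are recycled, with a cut-off, in the proof of Theorem~\ref{t:tricotomia}~(iii).
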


In order to better compare the assumptions of Theorems
\ref{t:main-1}-\ref{t:main-2}, we consider a special case in which
every domain is covered by only one chart. Let $W$ be a cuboid or
a bounded domain in $\R^{N-1}$ of class $C^{0,1}$. Let us assume
that $\Omega_\eps$ is given by
\begin{equation} \label{eq:Omega-eps}
\Omega_\eps:=\{(x',x_N):x'\in W \, , -1<x_N<g_\eps(x')\}
\end{equation}
where $g_\eps(x')=\eps^\alpha b(x'/\eps)$ for any $x'\in W$ and the function $b$ satisfies:
\begin{equation}\label{eq:cond-b-Y}
  b\in C^{0,1}(\R^{N-1}), \quad b\ge 0 \ \ \text{in } \R^{N-1}, \quad b \ \text{is a $Y$-periodic function}
\end{equation}
where $Y=\left(-\frac 12,\frac 12\right)^{N-1}$ is the unit cell in
$\R^{N-1}$. We also assume that
\begin{equation} \label{eq:Lebesgue}
\mathcal H^{N-1}\left(\left\{x'\in \R^{N-1}:|\nabla_{x'} \, b(x')|=0\right\}\right)=0 \, .
\end{equation}
We also put $\Omega=W\times (-1,0)$.

The next result shows how the exponent $\alpha$ introduced in the
definition of $g_\eps$ plays a crucial role in determining the
validity of one of the three coupled conditions
\eqref{eq:assumptions} and \eqref{eq:conv-per},
\eqref{eq:assumptions} and \eqref{eq:weak-L1},
\eqref{eq:th-main-2-0} and \eqref{eq:th-main-2}.

\begin{proposition} \label{p:comparison}
Let $\{\Omega_\eps\}_{\eps\ge 0}$ be a family of domains like in
\eqref{eq:Omega-eps} with $g_\eps(x')=\eps^\alpha b(x'/\eps)$ if
$\eps>0$, $b$ satisfying \eqref{eq:cond-b-Y}-\eqref{eq:Lebesgue}, and with $\Omega_0=\Omega=W\times (-1,0)$. Then we have
\begin{itemize}
\item[(i)] if $\alpha>1$ then
\eqref{eq:assumptions}-\eqref{eq:conv-per} hold true with $W_j=W$,
$g_\eps$ in place of $g_{\eps,j}$ and $g_j\equiv 0$ , i.e.
\begin{align*}
& \lim_{\eps \to 0} \|g_{\eps}\|_{L^\infty(W)}=0 \, , \qquad
\left\|\frac{\partial g_\eps}{\partial
x_k}\right\|_{L^\infty(W)}=O(1) \quad \text{as } \eps\to 0  \quad \text{for any } k\in \{1,\dots,N\} \, , \\[8pt]
& {\rm Per}(\Omega_\eps)\to {\rm Per}(\Omega) \quad \text{as }
\eps\to 0 \, ;
\end{align*}

\item[(ii)] if $\alpha=1$ then \eqref{eq:assumptions},
\eqref{eq:weak-L1} hold true with $W_j=W$, $g_\eps$ in place of
$g_{\eps,j}$, $g_j\equiv 0$ and with the constant function $\int_Y
\sqrt{1+|\nabla_{x'} b(y')|^2}dy'$ in place of $\gamma_j$, i.e.
\begin{align*}
& \lim_{\eps \to 0} \|g_{\eps}\|_{L^\infty(W)}=0 \, , \qquad
\left\|\frac{\partial g_\eps}{\partial
x_k}\right\|_{L^\infty(W)}=O(1) \quad \text{as } \eps\to 0  \quad \text{for any } k\in \{1,\dots,N\} \, , \\[8pt]
& \sqrt{1+|\nabla_{x'}g_\eps|^2} \rightharpoonup \int_Y
\sqrt{1+|\nabla_{x'} b(y')|^2}\, dy' \qquad \text{weakly in }
L^1(W) \quad \text{as } \eps\to 0 \, ;
\end{align*}

\item[(iii)] if \, $0<\alpha<1$  then
\eqref{eq:th-main-2-0}-\eqref{eq:th-main-2} hold true with
$W_j=W$, $g_\eps$ in place of $g_{\eps,j}$ and $g_j\equiv 0$, i.e.
\begin{align*}
& \lim_{\eps \to 0} \|g_{\eps}\|_{L^\infty(W)}=0 \, , \qquad
\lim_{\eps \to 0} \mathcal H^{N-1}\left(\left\{x'\in
W:\sqrt{1+|\nabla_{x'} \, g_{\eps}(x')|^2}\le t\right\}\right)=0
\quad \text{for any } t>0 \, .
\end{align*}

\end{itemize}
\end{proposition}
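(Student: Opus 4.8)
The plan is to verify all three alternatives by direct computation from the explicit scaling $g_\eps(x')=\eps^\alpha b(x'/\eps)$, with $\alpha$ being the only relevant parameter. First I would record the facts common to the three cases. Since $b\in C^{0,1}(\R^{N-1})$ is $Y$-periodic it is bounded, so $\|g_\eps\|_{L^\infty(W)}=\eps^\alpha\|b\|_{L^\infty(\R^{N-1})}\to 0$ for every $\alpha>0$; this is the first assertion in (i), (ii) and (iii). By the chain rule $\partial_{x_k}g_\eps(x')=\eps^{\alpha-1}(\partial_k b)(x'/\eps)$, hence $\|\partial_{x_k}g_\eps\|_{L^\infty(W)}=\eps^{\alpha-1}\|\partial_k b\|_{L^\infty(\R^{N-1})}$, which is $o(1)$ if $\alpha>1$ and equals the constant $\|\partial_k b\|_{L^\infty(\R^{N-1})}$ if $\alpha=1$; in both cases it is $O(1)$, which is the second half of \eqref{eq:assumptions} for parts (i) and (ii). I would also record the identity $\sqrt{1+|\nabla_{x'}g_\eps(x')|^2}=\sqrt{1+\eps^{2(\alpha-1)}|\nabla b(x'/\eps)|^2}$, which drives the remaining arguments.

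For part (i) it remains to prove $\mathrm{Per}(\Omega_\eps)\to\mathrm{Per}(\Omega)$. When $\alpha>1$ one has $\eps^{2(\alpha-1)}|\nabla b(x'/\eps)|^2\le\eps^{2(\alpha-1)}\|\nabla b\|_{L^\infty(\R^{N-1})}^2\to 0$ uniformly in $x'\in W$, so $\sqrt{1+|\nabla_{x'}g_\eps|^2}\to 1$ uniformly on $W$. I would then split $\partial\Omega_\eps$ into its top graph $\{(x',g_\eps(x')):x'\in\overline W\}$, its fixed bottom $W\times\{-1\}$, and its lateral wall over $\partial W$, so that $\mathrm{Per}(\Omega_\eps)=\int_W\sqrt{1+|\nabla_{x'}g_\eps|^2}\,dx'+|W|+\mathcal H^{N-1}(\Sigma_\eps^{\mathrm{lat}})$. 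The first term tends to $|W|$ by the uniform convergence above, and $\mathcal H^{N-1}(\Sigma_\eps^{\mathrm{lat}})$ tends to the lateral perimeter of $\Omega=W\times(-1,0)$ because $g_\eps\to 0$ uniformly (immediate when $W$ is a cuboid; for a Lipschitz $W$ one localizes $\partial W$ as finitely many graph patches and passes to the limit by dominated convergence). Summing, $\mathrm{Per}(\Omega_\eps)\to 2|W|+\mathcal H^{N-1}(\Sigma_0^{\mathrm{lat}})=\mathrm{Per}(\Omega)$.

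For part (ii), when $\alpha=1$ the above identity reads $\sqrt{1+|\nabla_{x'}g_\eps(x')|^2}=F(x'/\eps)$ with $F(y):=\sqrt{1+|\nabla b(y)|^2}\in L^\infty(\R^{N-1})$ and $Y$-periodic; the classical theorem on weak limits of rapidly oscillating periodic functions then gives $F(\cdot/\eps)\rightharpoonup\int_Y\sqrt{1+|\nabla_{x'}b(y')|^2}\,dy'$ in the weak* topology of $L^\infty(W)$ (using $|Y|=1$), hence in particular weakly in $L^1(W)$, which is the claim. For part (iii), when $0<\alpha<1$ the set in \eqref{eq:th-main-2} is empty for $t<1$, while for $t\ge 1$ it equals $\{x'\in W:|\nabla b(x'/\eps)|\le\delta_\eps\}$ with $\delta_\eps:=\eps^{1-\alpha}\sqrt{t^2-1}\to 0$ (since $1-\alpha>0$). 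I would cover $W$ by the $\eps$-dilated lattice cells $\eps(z+Y)$, $z\in\mathbb Z^{N-1}$: by $Y$-periodicity of $|\nabla b|$ each cell contributes at most $\eps^{N-1}\mathcal H^{N-1}(A_{\delta_\eps})$, where $A_\delta:=\{y\in Y:|\nabla b(y)|\le\delta\}$, and the number of cells meeting $W$ is $O(\eps^{-(N-1)})$; hence $\mathcal H^{N-1}(\{x'\in W:|\nabla b(x'/\eps)|\le\delta_\eps\})\le C\,\mathcal H^{N-1}(A_{\delta_\eps})$ with $C=C(W)$. Since $A_\delta\downarrow\{y\in Y:\nabla b(y)=0\}$ as $\delta\downarrow 0$, and the latter set is $\mathcal H^{N-1}$-null by \eqref{eq:Lebesgue}, continuity of measure from above gives $\mathcal H^{N-1}(A_{\delta_\eps})\to 0$, proving the claim.

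All computations here are elementary; the only points requiring some care are the continuity of the lateral perimeter in part (i) when $W$ is merely Lipschitz rather than a cuboid — which I would settle using the uniform smallness of $g_\eps$ together with a finite covering of $\partial W$ by graph patches — and, in part (iii), the bookkeeping of partial cells near $\partial W$, which add only an $O(\eps^{-(N-2)})$ correction to the cell count and therefore do not affect the bound $C\,\mathcal H^{N-1}(A_{\delta_\eps})$. I expect these minor measure-theoretic bookkeeping issues, rather than any substantial idea, to be the main obstacle.
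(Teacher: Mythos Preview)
Your proposal is correct and follows essentially the same approach as the paper: the paper dismisses part (i) as ``a simple computation'' (so your explicit treatment of the lateral perimeter is more than it offers), handles part (ii) by the standard weak convergence of periodic oscillating functions (which it proves by a Riemann-sum argument over the $\eps$-lattice but also cites as classical, matching your invocation of the theorem), and handles part (iii) by the same $\eps$-cell covering and periodicity reduction to $\mathcal H^{N-1}(\{y\in Y:|\nabla b(y)|\le\eps^{1-\alpha}t\})\to 0$ via \eqref{eq:Lebesgue}. The only cosmetic difference in (iii) is that the paper uses the cruder inclusion $\{\sqrt{1+|\nabla g_\eps|^2}\le t\}\subset\{|\nabla g_\eps|\le t\}$ rather than your exact rewriting with $\delta_\eps=\eps^{1-\alpha}\sqrt{t^2-1}$, and it controls boundary cells by measuring $W\setminus F_\eps$ rather than counting them, but the substance is identical.
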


Looking at the statement of Proposition \ref{p:comparison} it
becomes clear that, at least in the particular case of the family
of domains defined in \eqref{eq:Omega-eps}, the three couples of
assumptions \eqref{eq:assumptions} and \eqref{eq:conv-per},
\eqref{eq:assumptions} and \eqref{eq:weak-L1},
\eqref{eq:th-main-2-0} and \eqref{eq:th-main-2}, becomes
complementary.

We set now
\begin{align} \label{eq:Gamma-Sigma}
& \Gamma_\eps:=\{(x',g_\eps(x')):x'\in W\} \, , \qquad \Gamma:=\{(x',0):x'\in W\} \, ,    \\[7pt]
& \notag \Sigma_\eps:=\partial\Omega_\eps\setminus \Gamma_\eps \,
, \qquad \Sigma:=\partial\Omega \setminus \Gamma \, .
\end{align}
For any $\eps\ge 0$ consider the following modified Steklov
problem:
\begin{equation} \label{eq:Steklov-modified}
\begin{cases}
\Delta u=0 \, , & \qquad \text{in } \Omega_\eps\, , \\
u=0 \, , & \qquad \text{on } \Sigma_\eps \, , \\
u_\nu=\lambda u \, , & \qquad \text{on } \Gamma_\eps \, ,
\end{cases}
\end{equation}
with the notation $\Omega_0:=\Omega$.

Let us denote by $\mu_n^\eps$ the eigenvalues of
\eqref{eq:Steklov-modified} for any $n\in \N\cup \{0\}$ and
$\eps\ge 0$.

We also consider the weighted problem

\begin{equation} \label{eq:Steklov-weighted-bis}
\begin{cases}
\Delta u=0 \, , & \qquad \text{in } \Omega \, , \\
u=0 \, , & \qquad \text{on } \Sigma \, , \\
u_\nu=\lambda C_b \, u \, , & \qquad \text{on } \Gamma \, ,
\end{cases}
\end{equation}
where $C_b=\int_Y \sqrt{1+|\nabla_{x'}b(y')|^2} \, dy'$.

It is clear that, if we denote by $\mu_n(b)$ the eigenvalues of
\eqref{eq:Steklov-weighted-bis} for any $n\in \N\cup \{0\}$, then
$\mu_n(b)=\frac{\mu_n^0}{C_b}$.

Combining the arguments used in the proofs of Theorems
\ref{t:main-1}-\ref{t:main-2} with the statement of Proposition
\ref{p:comparison}, we obtain a trichotomy result which better
emphasizes the complementarity of conditions assumed in the three
main theorems.

\begin{theorem} \label{t:tricotomia}
Let $\{\Omega_\eps\}_{\eps\ge 0}$ be a family of domains like in
\eqref{eq:Omega-eps} with $g_\eps(x')=\eps^\alpha b(x'/\eps)$ if
$\eps>0$, $b$ satisfying \eqref{eq:cond-b-Y}-\eqref{eq:Lebesgue}, and with $\Omega_0=\Omega=W\times (-1,0)$.

Let $\mu_n^\eps$ be the eigenvalues of \eqref{eq:Steklov-modified}
for any $\eps\ge 0$ and let $\mu_n(b)$ be the eigenvalues of
\eqref{eq:Steklov-weighted-bis}. Then the following statements
hold true:

\begin{itemize}

\item[(i)] if $\alpha>1$ then $\mu_n^\eps \to \mu_n^0$ as $\eps\to
0$;

\medskip

\item[(ii)] if $\alpha=1$ then $\mu_n^\eps \to \mu_n(b)$ as
$\eps\to 0$, i.e. $\ds{\mu_n^\eps\to}\, \frac{\mu_n^0}{C_b}$ as
$\eps\to 0$;

\medskip

\item[(iii)] if $0<\alpha<1$ then $\mu_n^\eps\to 0$ as $\eps\to
0$.

\end{itemize}

\end{theorem}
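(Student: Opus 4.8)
The plan is to reduce Theorem~\ref{t:tricotomia} to the three main theorems via Proposition~\ref{p:comparison}, but with a preliminary reduction that connects the \emph{modified} Steklov problem \eqref{eq:Steklov-modified} (which imposes Dirichlet conditions on the fixed part $\Sigma_\eps$ of the boundary) to the \emph{full} Steklov problem treated in Theorems~\ref{t:main-1}--\ref{t:main-2}. First I would observe that \eqref{eq:Steklov-modified} is itself a Steklov-type eigenvalue problem, now posed on the closed subspace $H^1_{0,\Sigma_\eps}(\Omega_\eps):=\{u\in H^1(\Omega_\eps): u=0 \text{ on } \Sigma_\eps\}$ of $H^1(\Omega_\eps)$, with the boundary form $\int_{\Gamma_\eps} uv\,dS$. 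One then builds the resolvent operator $\widetilde S_\eps$ exactly as in Section~\ref{s:functional-setting-B} but on this subspace (using a Poincar\'e-type inequality valid because $\Sigma_\eps$ has positive measure and sits uniformly away from $\Gamma_\eps$), and checks that $\mu\neq 0$ is an eigenvalue of $\widetilde S_\eps$ iff $\lambda=\frac1\mu-1$ is an eigenvalue $\mu_n^\eps$ of \eqref{eq:Steklov-modified}.

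Next I would set up the connecting system for these restricted spaces. Because $\Omega_\eps$ and $\Omega$ here are in the ``single chart'' form of Remark~\ref{r:cuboid}, the operator $E_\eps u(x)=u(\Phi_\eps(x))$ maps $H^1(\Omega)$ to $H^1(\Omega_\eps)$, and crucially $\Phi_\eps$ acts as the identity near $\Sigma_\eps=\Sigma_\eps$ (the lateral/bottom boundary $x'\in\partial W$ or $x_N=-1$), so $E_\eps$ restricts to a map $H^1_{0,\Sigma}(\Omega)\to H^1_{0,\Sigma_\eps}(\Omega_\eps)$. I would then invoke the norm-convergence computations of Lemma~\ref{l:1} (for case~(i), $\alpha>1$, where Proposition~\ref{p:comparison}(i) gives \eqref{eq:assumptions}--\eqref{eq:conv-per}) and of the weighted analogue \eqref{eq:conv-norm-weighted} (for case~(ii), $\alpha=1$, where Proposition~\ref{p:comparison}(ii) gives \eqref{eq:weak-L1} with $\gamma_j\equiv\int_Y\sqrt{1+|\nabla_{x'}b|^2}\,dy'$, hence by \eqref{eq:gamma} the weight $\gamma\equiv C_b$ on $\Gamma$). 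Re-running the proofs of Theorems~\ref{t:main-1} and~\ref{t:main-3} verbatim on the subspaces $H^1_{0,\Sigma}$ yields $\widetilde S_\eps\overset{C}{\longrightarrow}\widetilde S$ in case~(i) and $\widetilde S_\eps\overset{C}{\longrightarrow}\widetilde S_{C_b}$ in case~(ii); Theorem~\ref{vaithm} then gives $\mu_n^\eps\to\mu_n^0$, respectively $\mu_n^\eps\to\mu_n^0/C_b=\mu_n(b)$, using the trivial relation $\mu_n(b)=\mu_n^0/C_b$ stated just before the theorem (constant weight simply rescales the Rayleigh quotient). For case~(iii), $0<\alpha<1$, Proposition~\ref{p:comparison}(iii) supplies \eqref{eq:th-main-2-0}--\eqref{eq:th-main-2}, and I would adapt the degeneration argument of Theorem~\ref{t:main-2}: using the minimax characterization \eqref{eq:alt-char} restricted to test functions vanishing on $\Sigma_\eps$, one constructs, for each fixed $n$, an $(n+1)$-dimensional trial space on which the Rayleigh quotient $\int_{\Omega_\eps}|\nabla v|^2/\int_{\Gamma_\eps}v^2\,dS$ is arbitrarily small, because the surface measure on $\Gamma_\eps$ blows up by \eqref{eq:th-main-2} while the Dirichlet energy of suitable fixed profiles stays bounded; this forces $\mu_n^\eps\to 0$.

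The main obstacle I expect is verifying that the connecting-system machinery, developed in Section~\ref{subsecoperatorsE} for the full $H^1$ spaces, transfers cleanly to the Dirichlet-constrained subspaces $H^1_{0,\Sigma_\eps}(\Omega_\eps)$ — in particular that the compactness of the trace map onto $\Gamma_\eps$ and the equivalence of norms hold uniformly in $\eps$, and that $E_\eps$ genuinely preserves the constraint. This hinges on $\Sigma_\eps$ being $\eps$-independent away from $\Gamma_\eps$ (which is true here since $g_\eps\to 0$ uniformly and the perturbation is concentrated near the top face), so the Dirichlet boundary portion and the associated Poincar\'e constant are stable; the rest is bookkeeping. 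A secondary technical point in case~(ii) is confirming that a \emph{constant} weight $\gamma\equiv C_b$ appears — this follows because Proposition~\ref{p:comparison}(ii) identifies the weak-$L^1$ limit of $\sqrt{1+|\nabla_{x'}g_\eps|^2}$ as the constant $\int_Y\sqrt{1+|\nabla_{x'}b|^2}\,dy'$ (a standard periodic-averaging fact), and then formula \eqref{eq:gamma} with $g_j\equiv 0$ gives $\gamma\equiv C_b$ on $\Gamma$, so \eqref{eq:Steklov-weighted} with this weight is exactly \eqref{eq:Steklov-weighted-bis}.
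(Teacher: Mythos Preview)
Your proposal is correct and follows essentially the same approach as the paper: reduce each case to the corresponding main theorem via Proposition~\ref{p:comparison}, adapting the arguments to the constrained subspaces $H^1_{0,\Sigma_\eps}(\Omega_\eps)$ with the single-chart connecting system of Remark~\ref{r:cuboid}. The only ingredient the paper supplies explicitly beyond your sketch is, for part~(iii), the construction of the trial functions vanishing on $\Sigma_\eps$ (Lemma~\ref{l:l-i-2}: take $\psi_k(x',x_N)=x_1^k\,\eta_1(x')\,\eta_2(x_N)$ with cutoffs $\eta_1\in C_c^\infty(W)$ and $\eta_2\in C^\infty(\R)$ vanishing near $x_N=-1$), which are then shown to be linearly independent on $\partial\Omega_\eps$ so that the proof of Theorem~\ref{t:main-2} carries over.
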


\section{Proof of Theorem \ref{t:main-1}} \label{s:p-t:main-1}

Inspired by \cite{FeLa}, we define a map which acts between the
spaces $H^1(\Omega)$, $H^1(\Omega_\eps)$ in a reversed way with
respect to $E_\eps$. For any $w\in H^1(\Omega_\eps)$ we put
\begin{equation} \label{eq:hat-w} \widehat w_{\eps,j}(x)=
\begin{cases} w_j(\Psi_{\eps,j}^{-1}(x)), & \qquad \text{if } x\in \Omega\cap V_j \\[8pt]
 0, & \qquad \text{if } x\in \Omega\setminus V_j \, .
\end{cases}
\end{equation}
for any $j\in \{1,\dots,s'\}$ and $w_j:= \psi_j w$ for any $j\in \{1,\dots,s\}$. We define
\begin{equation}\label{inversodiE}
B_\eps w:=\sum_{j=1}^{s'} \widehat w_{\eps,j}+\sum_{j=s'+1}^s w_j.
\end{equation}
 In this way we have constructed a map
$B_\eps:H^1(\Omega_\eps)\to H^1(\Omega)$.

Next we prove the following

\begin{lemma} \label{l:weak-convergence} Let $\mathcal A$ be an atlas. Let
$\{\Omega_\eps\}_{0<\eps\le\eps_0}$ be a family of domains of
class $C^{0,1}(\mathcal A)$ and $\Omega$ a domain of class
$C^{0,1}(\mathcal A)$. Assume the validity of conditions
\eqref{eq:assumptions}-\eqref{eq:conv-per}. Let $w_\eps \in
H^1(\Omega_{\eps})$ with $0<\eps\le \eps_0$, and $w\in
H^1(\Omega)$ be such that $w_\eps \EC w$. If we put
$u_\eps:=S_{\eps} w_\eps$ and $u:=S w$, then $B_\eps
u_\eps\rightharpoonup u$ in $H^1(\Omega)$ as $\eps\to 0$.
\end{lemma}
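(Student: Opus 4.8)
\textbf{Proof plan for Lemma \ref{l:weak-convergence}.}
The plan is to show first that the sequence $\{B_\eps u_\eps\}$ is bounded in $H^1(\Omega)$, so that it has weakly convergent subsequences, and then to identify every weak limit with $u=Sw$ by passing to the limit in the variational identity that characterises $u_\eps=S_\eps w_\eps$. For the boundedness, I would use that $S_\eps$ is a bounded operator with norm bounded uniformly in $\eps$ (which follows from Lemma \ref{complete}, since the constants $C(N,\Omega_\eps)$ are controlled by the common atlas $\mathcal A$ and the uniform $C^{0,1}$-bound on the $g_{\eps,j}$ coming from \eqref{eq:assumptions}); hence $\|u_\eps\|_\eps\le C\|w_\eps\|_\eps$, and $\|w_\eps\|_\eps$ is bounded because $w_\eps \EC w$ implies $\|w_\eps\|_\eps\to\|w\|_0$ by \eqref{eq:conv-norm}. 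Then the definition of $B_\eps$ together with the Jacobian estimate \eqref{eq:stima-Jacob} and \eqref{eq:estimate-2} applied to $\Psi_{\eps,j}^{-1}$ gives $\|B_\eps u_\eps\|_0\le C\|u_\eps\|_\eps$, so $\{B_\eps u_\eps\}$ is bounded in $H^1(\Omega)$. Fix a subsequence along which $B_\eps u_\eps\rightharpoonup \tilde u$ in $H^1(\Omega)$ for some $\tilde u$; it suffices to prove $\tilde u = u$.

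Next I would identify $\tilde u$. Recall that $u_\eps=S_\eps w_\eps$ means $T_\eps u_\eps = J_\eps w_\eps$, i.e.
\begin{equation*}
\int_{\Omega_\eps}\nabla u_\eps\nabla\varphi\,dx+\int_{\partial\Omega_\eps}u_\eps\varphi\,dS=\int_{\partial\Omega_\eps}w_\eps\varphi\,dS\qquad\text{for all }\varphi\in H^1(\Omega_\eps).
\end{equation*}
The key idea is to test this identity with $\varphi = E_\eps v$ for an arbitrary fixed $v\in H^1(\Omega)$, and then pass to the limit $\eps\to 0$ in each of the three terms. For the right-hand side and the boundary term on the left, I would use the structure of $E_\eps$ (pull-back by the $\Psi_{\eps,j}$, the surface-element convergence \eqref{eq:Jac} following from \eqref{eq:assumptions}--\eqref{eq:conv-per}, and Proposition \ref{p:7}) together with $w_\eps\EC w$, i.e. $\|w_\eps - E_\eps w\|_\eps\to 0$, to conclude
$\int_{\partial\Omega_\eps}w_\eps\,(E_\eps v)\,dS\to\int_{\partial\Omega}w v\,dS$ and
$\int_{\partial\Omega_\eps}u_\eps\,(E_\eps v)\,dS\to\int_{\partial\Omega}\tilde u\, v\,dS$; the latter requires knowing that the traces of $B_\eps u_\eps$ on $\partial\Omega$ converge, which one extracts from the boundedness of $\int_{\partial\Omega_\eps}u_\eps^2\,dS$ and the change-of-variables relating the boundary integrals over $\partial\Omega_\eps$ to those over $\partial\Omega$ via $\Psi_{\eps,j}$. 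For the Dirichlet term, the crucial point is the localisation property of $E_\eps$: there is an open set $K_\eps\subset\Omega\cap\Omega_\eps$ with $E_\eps v = v$ and (by the analogous statement) $B_\eps u_\eps = u_\eps$ on $K_\eps$, and $|(\Omega\cup\Omega_\eps)\setminus K_\eps|\to 0$; on $K_\eps$ one has $\int_{K_\eps}\nabla u_\eps\nabla v\,dx=\int_{K_\eps}\nabla(B_\eps u_\eps)\nabla v\,dx\to\int_\Omega\nabla\tilde u\nabla v\,dx$ using the weak $H^1$-convergence of $B_\eps u_\eps$ together with $\|\nabla v\|_{L^2((\Omega\cup\Omega_\eps)\setminus K_\eps)}\to 0$, while the contribution of $\Omega_\eps\setminus K_\eps$ is handled by the uniform bound on $\|u_\eps\|_\eps$ and $|\Omega_\eps\setminus K_\eps|\to 0$ with $\nabla v$ squared-integrable there. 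Putting these together yields
\begin{equation*}
\int_\Omega\nabla\tilde u\nabla v\,dx+\int_{\partial\Omega}\tilde u\,v\,dS=\int_{\partial\Omega}w\,v\,dS\qquad\text{for all }v\in H^1(\Omega),
\end{equation*}
which says exactly $T\tilde u = Jw$, i.e. $\tilde u=Sw=u$. Since the weak limit is the same for every subsequence, the whole sequence $B_\eps u_\eps$ converges weakly to $u$ in $H^1(\Omega)$.

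The main obstacle, I expect, is the passage to the limit in the boundary integrals $\int_{\partial\Omega_\eps}u_\eps(E_\eps v)\,dS$ and $\int_{\partial\Omega_\eps}w_\eps(E_\eps v)\,dS$: unlike the interior term, these cannot be reduced to a set $K_\eps$ where the maps are the identity, so one must genuinely track the surface Jacobians $W_{\eps,j}$ of \eqref{eq:W-eps-j} and their a.e.\ convergence to $1$ (from \eqref{eq:Jac}) while simultaneously controlling the weak convergence of the (traces of the) pulled-back functions; this is precisely where conditions \eqref{eq:assumptions}--\eqref{eq:conv-per} and Proposition \ref{p:7} do the essential work, in the same spirit as the concluding part of the proof of Lemma \ref{l:1}. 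A secondary technical point is checking the uniform boundedness of $\|S_\eps\|$, which relies on the uniformity (in $\eps$) of the trace and Poincaré-type constants in Lemma \ref{complete}, guaranteed by the fact that all the $\Omega_\eps$ belong to the same class $C^{0,1}(\mathcal A)$ with uniformly bounded $g_{\eps,j}$.
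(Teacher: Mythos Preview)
Your plan is correct and follows essentially the same route as the paper's proof: bound $\|u_\eps\|_\eps$, deduce boundedness of $B_\eps u_\eps$ in $H^1(\Omega)$, extract a weak limit $\tilde u$, test the variational identity for $u_\eps$ against $E_\eps v$, pass to the limit in the interior and boundary terms separately (using the $K_\eps$-localisation for the former and the surface-Jacobian convergence with Proposition~\ref{p:7} for the latter), and identify $\tilde u=u$ by uniqueness.

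Two minor remarks. First, the uniform bound on $\|S_\eps\|$ is in fact immediate and does not require the uniform trace/Poincar\'e constants you invoke: testing the identity $(u_\eps,v)_\eps=\int_{\partial\Omega_\eps}w_\eps v\,dS$ with $v=u_\eps$ and applying Cauchy--Schwarz gives $\|u_\eps\|_\eps\le\|w_\eps\|_\eps$ directly, which is how the paper does it. Second, be careful with the claim that $E_\eps v=v$ and $B_\eps u_\eps=u_\eps$ hold \emph{exactly} on $K_\eps$: because $K_\eps$ is a union of the local sets $r_j^{-1}(K_{\eps,j})$ and a point may lie in several charts, the paper treats these equalities only up to $o(1)$ corrections (see the references to \cite[(82),(84)]{FeLa}); this does not affect your argument, but the exact equality as stated needs either a smaller $K_\eps$ or the same $o(1)$ bookkeeping.
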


\begin{proof} We divide the proof of the lemma into several steps.
The argument used in this proof is essentially based on the one
presented in the proof of \cite[Lemma 8]{FeLa}.

{\bf Step 1.} In this step we prove that $\|u_\eps\|_\eps$ is
uniformly bounded with respect to $\eps\in (0,\eps_0]$.

Indeed
\begin{align}\label{duno}
\|u_\eps\|_\eps ^2 & =\int_{\partial\Omega_\eps} w_\eps u_\eps \,
dS \le \left(\int_{\partial\Omega_\eps} w_\eps^2 \,
dS\right)^{1/2} \left(\int_{\partial\Omega_\eps} u_\eps^2 \,
dS\right)^{1/2}\le \|w_\eps\|_\eps \, \|u_\eps\|_\eps
\end{align}
from which it follows that $\|u_\eps\|_\eps \le \|w_\eps\|_\eps$.
Now we observe that $\|w_\eps\|_\eps$ is uniformly bounded since
\begin{equation} \label{eq:bound-weps}
\|w_\eps\|_\eps\le \|w_\eps-E_\eps w\|_\eps+\|E_\eps w\|_\eps=O(1)
\qquad \text{as } \eps\to 0
\end{equation}
as one can deduce by Definition \ref{d:E-convergence-ArCaLo},
\eqref{eq:norm-convergence}, \eqref{eq:conv-norm} and Lemma
\ref{l:1} (ii).

{\bf Step 2.} We prove that $\{B_\eps u_\eps\}_{0<\eps\le \eps_0}$
is bounded in $H^1(\Omega)$ and, in particular, that it is weakly
convergent in $H^1(\Omega)$ along a sequence $\eps_n\downarrow 0$
as $n\to +\infty$. Indeed, by \eqref{eq:hat-w},
\eqref{inversodiE}, \eqref{eq:assumptions}, \eqref{eq:stima-Jacob}
\eqref{eq:estimate-2} and some computations, one can prove that,
up to shrink $\eps_0$ if necessary,
\begin{equation} \label{eq:b-nabla-vol}
\int_\Omega |\nabla(B_\eps u_\eps)|^2 \, dx\le C
\int_{\Omega_\eps} |\nabla u_\eps|^2 \, dx \qquad \text{as }
\eps\to 0
\end{equation}
for some constant $C$ independent of $\eps$. For the same reason
one can prove that, up to shrink $\eps_0$ if necessary,
\begin{equation} \label{eq:b-boundary}
\int_{\partial \Omega} (B_\eps u_\eps)^2 \, dS\le C \int_{\partial
\Omega_\eps} u_\eps^2 \, dS
\end{equation}
for some constant $C$ independent of $\eps$. Combining
\eqref{eq:b-nabla-vol}-\eqref{eq:b-boundary}, we deduce that there
exists a constant $C$ independent of $\eps$ such that $\|B_\eps
u_\eps\|_0\le C \|u_\eps\|_\eps$ for any $\eps$ small enough. The
boundedness of $\|B_\eps u_\eps\|_0$ now follows by Step 1.

Hence, we have that there
exists $\tilde u\in H^1(\Omega)$ such that, along a sequence $\eps_n \downarrow 0$, $B_{\eps_n} u_{\eps_n}
\rightharpoonup \tilde u$ in $H^1(\Omega)$. For simplicity in the
sequel we only write $B_\eps u_{\eps} \rightharpoonup \tilde u$ as
$\eps\to 0$ for denoting this convergence along the sequence
$\{\eps_n\}$. By passing to the limit in the following identity
\begin{equation} \label{eq:u_eps}
(u_\eps,E_\eps \varphi)_\eps=\int_{\partial\Omega_\eps} w_\eps \,
E_\eps \varphi \, dS \qquad \text{for any } \varphi\in H^1(\Omega)
\, ,
\end{equation}
in the next steps we will show that $\tilde u=u$.

{\bf Step 3.} In this step we pass to the limit in the left hand
side of \eqref{eq:u_eps}. We follow closely the argument contained
in the proof of Step 3 in \cite[Lemma 8]{FeLa}.

We define $K_\eps:=\left(\bigcup_{j=1}^{s'}
r_j^{-1}(K_{\eps,j})\right)\cup \left(\bigcup_{j=s'+1}^s
V_j\right)$ and we split the left hand side of \eqref{eq:u_eps} in
the following way
\begin{align} \label{eq:splitting-2}
(u_\eps,E_\eps \varphi)_\eps=Q_{K_\eps}(u_\eps,E_\eps
\varphi)+Q_{\Omega_\eps\setminus K_\eps}(u_\eps,E_\eps
\varphi)+\int_{\partial\Omega_\eps} u_\eps E_\eps \varphi \, dS
\end{align}
where $K_{\eps,j}$ denotes the set defined in Section
\ref{subsecoperatorsE} and, for any measurable set $A\subset
\R^N$, $Q_A(\cdot,\cdot)$ is the bilinear form defined by
\begin{equation} \label{eq:bilinear}
Q_A(u,v):=\int_A \nabla u \nabla v\, dx \, .
\end{equation}
We also denote by $Q_A(\cdot)$ the quadratic form
\begin{equation} \label{eq:quadratic}
Q_A(u):=Q_A(u,u)=\int_A |\nabla u|^2 dx \, .
\end{equation}

Following the argument employed for proving \cite[Estimate
(82)]{FeLa} with $Q_A$ replaced by our $Q_A$ defined in
\eqref{eq:bilinear} and \eqref{eq:quadratic}, we obtain
\begin{align} \label{eq:u-E-phi}
& Q_{K_\eps}(u_\eps,E_\eps
\varphi)=Q_{K_\eps}(u_\eps,\varphi)+o(1) \, .
\end{align}
Similarly one can prove that
\begin{align} \label{eq:B-u-phi}
  Q_{K_\eps}(B_\eps u_\eps,\varphi)=Q_{K_\eps}(u_\eps,\varphi)+o(1) \qquad \text{as } \eps\to 0 \, .
\end{align}
For more details about \eqref{eq:B-u-phi} see the proof of
\cite[Estimate (84)]{FeLa}.

Combining \eqref{eq:u-E-phi} and \eqref{eq:B-u-phi} we obtain
\begin{equation} \label{eq:scarico-E-B}
Q_{K_\eps}(u_\eps,E_\eps\varphi)=Q_{K_\eps}(B_\eps u_\eps,\varphi)+o(1) \qquad \text{as } \eps\to 0 \, .
\end{equation}
Now, proceeding as in \cite[Estimates (86)-(87)]{FeLa}, for the
second term on the right hand side of \eqref{eq:splitting-2}, we
have
\begin{equation} \label{eq:resto-1}
Q_{\Omega_\eps\setminus K_\eps}(u_\eps,E_\eps \varphi)=o(1) \qquad
\text{as } \eps\to 0 \, .
\end{equation}
Similarly we also have
\begin{equation} \label{eq:resto-1-0}
Q_{\Omega\setminus K_\eps}(B_\eps u_\eps,\varphi)=o(1) \qquad
\text{as } \eps \to 0 \, .
\end{equation}
Combining \eqref{eq:resto-1}-\eqref{eq:resto-1-0} with
\eqref{eq:scarico-E-B} we infer
\begin{equation}\label{eq:passaggio-01}
Q_{\Omega_\eps}(u_\eps,E_\eps\varphi)=Q_{K_\eps}(B_\eps u_\eps,\varphi)+Q_{\Omega\setminus K_\eps}(B_\eps u_\eps,\varphi)+o(1)
=Q_\Omega(B_\eps u_\eps,\varphi)+o(1) \qquad \text{as } \eps\to 0 \, .
\end{equation}
Let us consider now the third term in the right hand side of
\eqref{eq:splitting-2}. We proceed as follows:
\begin{align} \label{eq:bou-est-0}
& \int_{\partial\Omega_\eps} u_\eps E_\eps \varphi \,
dS=\sum_{i,j=1}^{s'} \int_{\partial\Omega_\eps\cap V_i\cap V_j}
u_{\eps,j}(x) \varphi_i(\Psi_{\eps,i}(x)) \, dS
\\
& \notag \qquad =\sum_{i,j=1}^{s'}
\int_{\Psi_{\eps,j}(\partial\Omega_\eps\cap V_i\cap V_j)} \widehat
u_{\eps,j}(y) \, \varphi_i(\Psi_{\eps,i}(\Psi_{\eps,j}^{-1}(y)))\,
W_{\eps,j}(y) \, dS
\end{align}
with $W_{\eps,j}$ as in \eqref{eq:W-eps-j}. By \eqref{eq:conv-per}
and its consequence \eqref{eq:Jac}, we deduce that the trivial
extension of $W_{\eps,j}$ to the whole $\partial\Omega$ converges
almost everywhere to the function $\chi_{\partial\Omega\cap V_j}$.

Therefore, by \eqref{eq:assumptions}, \eqref{eq:Jac} and
Proposition \ref{p:7}, we obtain as $\eps\to 0$
\begin{align} \label{eq:bou-est}
& \int_{\partial\Omega_\eps} u_\eps E_\eps \varphi \, dS
=\sum_{i,j=1}^{s'} \int_{\partial\Omega\cap V_i \cap V_j} \widehat
u_{\eps,j}(y) \, \varphi_i(y) \, dS+o(1) \\
& \notag \qquad =\sum_{i=1}^{s'} \int_{\partial\Omega\cap V_i}
B_\eps u_\eps \, \varphi_i \, dS+o(1)=\int_{\partial\Omega} B_\eps
u_\eps \, \varphi \, dS+o(1) \, .
\end{align}
Since $B_\eps u_\eps \rightharpoonup \tilde u$ in $H^1(\Omega)$,
inserting \eqref{eq:passaggio-01} and \eqref{eq:bou-est} into
\eqref{eq:splitting-2} and exploiting the continuity of the trace
map from $H^1(\Omega)$ into $L^2(\partial\Omega)$, we obtain
\begin{equation} \label{eq:passaggio-1}
(u_\eps,E_\eps\varphi)_\eps \to Q_\Omega(\tilde
u,\varphi)+\int_{\partial\Omega} \tilde u\varphi \, dS=(\tilde
u,\varphi)_0 \qquad \text{as } \eps\to 0 \, .
\end{equation}

{\bf Step 4.} The next purpose is to pass to the limit in the
right hand side of \eqref{eq:u_eps}.

First of all we observe that thanks to Lemma \ref{l:1} and the
fact that $w_\eps\EC w$
\begin{align} \label{eq:sost}
& \left|\int_{\partial\Omega_\eps} w_\eps\, E_\eps \varphi \,
dS-\int_{\partial\Omega_\eps} E_\eps w \, E_\eps \varphi \,
dS\right| \le  \left(\int_{\partial \Omega_\eps} |w_\eps-E_\eps
w|^2 \, dS\right)^{1/2}\cdot \left(\int_{\partial \Omega_\eps}
|E_\eps \varphi|^2 dS\right)^{1/2} \\[7pt]
& \notag \qquad \le \|w_\eps-E_\eps w\|_\eps\,
\|E_\eps\varphi\|_\eps=o(1) \qquad \text{as } \eps\to 0 \, .
\end{align}
Proceeding as for the proof of \eqref{eq:bou-est} one can prove
that
\begin{equation} \label{eq:claim}
\int_{\partial\Omega_\eps} E_\eps w \, E_\eps \varphi \, dS\to
\int_{\partial\Omega} w \varphi \, dS \qquad \text{as } \eps\to 0
\, .
\end{equation}
Combining \eqref{eq:sost} and \eqref{eq:claim}, we conclude that
\begin{equation} \label{eq:passaggio-2}
\int_{\partial\Omega_\eps} w_\eps \, E_\eps \varphi \, dS \to
\int_{\partial\Omega} w \varphi \, dS \qquad \text{as } \eps\to 0
\, .
\end{equation}

{\bf Step 5.} In this last step we complete the proof of the
lemma.

Inserting \eqref{eq:passaggio-1} and \eqref{eq:passaggio-2} into
\eqref{eq:u_eps} we deduce that
\begin{equation*}
(\tilde u,\varphi)_0=\int_{\partial\Omega} w \varphi \, dS \qquad
\text{for any } \varphi\in H^1(\Omega)\, .
\end{equation*}
We have shown that $\tilde u$ and $u$ are solutions of the same
variational problem which admits a unique solution as the reader
can easily check. This proves that $\tilde u=u$. In particular
this means that the weak limit $\tilde u$ does not depend on the
choice of the sequence $\eps_n \downarrow 0$, thus proving that
the convergence $B_\eps u_\eps\rightharpoonup u$ does not occur
only along a special sequence but as $\eps \to 0$ in the usual
sense. This completes the proof of the lemma.
\end{proof}

\begin{lemma} \label{l:E-conv} Let $\mathcal A$ be an atlas. Let
$\{\Omega_\eps\}_{0<\eps\le\eps_0}$ be a family of domains of
class $C^{0,1}(\mathcal A)$ and $\Omega$ a domain of class
$C^{0,1}(\mathcal A)$. Assume the validity of conditions
\eqref{eq:assumptions}, \eqref{eq:conv-per}.
 Let $w_\eps \in H^1(\Omega_{\eps})$ with $0<\eps\le \eps_0$, and $w\in H^1(\Omega)$ be
such that $w_\eps \EC w$. If we put $u_\eps:=S_\eps w_\eps$ and
$u:=S w$ then $u_\eps \EC u$. In particular this implies that
$S_\eps \EEC S$ as $\eps\to 0$ in the sense of Definition
\ref{d:EE-convergence-ArCaLo}.
\end{lemma}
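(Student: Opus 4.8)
The plan is to deduce the $E$-convergence $u_\eps \EC u$ from the Hilbert-space identity
\[
\|u_\eps-E_\eps u\|_\eps^2=\|u_\eps\|_\eps^2-2(u_\eps,E_\eps u)_\eps+\|E_\eps u\|_\eps^2 ,
\]
by showing that each of the three terms on the right converges to $\|u\|_0^2$. For the last term this is exactly \eqref{eq:conv-norm} (Lemma \ref{l:1} (ii)). For the middle term we use that $u_\eps=S_\eps w_\eps$ is characterized by $(u_\eps,v)_\eps=\int_{\partial\Omega_\eps} w_\eps v\,dS$ for all $v\in H^1(\Omega_\eps)$; taking $v=E_\eps u$ gives $(u_\eps,E_\eps u)_\eps=\int_{\partial\Omega_\eps} w_\eps\,E_\eps u\,dS$, and this converges to $\int_{\partial\Omega} wu\,dS$ by exactly the argument of Step~4 of the proof of Lemma \ref{l:weak-convergence}, i.e. \eqref{eq:sost}--\eqref{eq:passaggio-2}, applied with the fixed function $\varphi:=u\in H^1(\Omega)$.

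The heart of the argument is the first term. Taking $v=u_\eps$ in the same characterization yields $\|u_\eps\|_\eps^2=\int_{\partial\Omega_\eps} w_\eps u_\eps\,dS$. Since $w_\eps\EC w$ and, by Step~1 of the proof of Lemma \ref{l:weak-convergence}, $\|u_\eps\|_\eps$ is bounded uniformly in $\eps$, a Cauchy--Schwarz estimate on $\partial\Omega_\eps$ (as in \eqref{eq:sost}) lets us replace $w_\eps$ by $E_\eps w$ up to an $o(1)$ error, so that $\|u_\eps\|_\eps^2=\int_{\partial\Omega_\eps} u_\eps\,E_\eps w\,dS+o(1)$. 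Now $w\in H^1(\Omega)$ is a fixed function, so the integral $\int_{\partial\Omega_\eps} u_\eps\,E_\eps w\,dS$ is precisely the quantity treated in \eqref{eq:bou-est-0}--\eqref{eq:bou-est} with $\varphi:=w$: decomposing via the partition of unity, changing variables on each chart through $\Psi_{\eps,j}$ with Jacobians $W_{\eps,j}$, and invoking \eqref{eq:Jac} together with Proposition \ref{p:7}, one gets $\int_{\partial\Omega_\eps} u_\eps\,E_\eps w\,dS=\int_{\partial\Omega} (B_\eps u_\eps)\,w\,dS+o(1)$. Finally, Lemma \ref{l:weak-convergence} gives $B_\eps u_\eps\rightharpoonup u$ in $H^1(\Omega)$, hence $B_\eps u_\eps\to u$ strongly in $L^2(\partial\Omega)$ by compactness of the trace operator, so $\int_{\partial\Omega}(B_\eps u_\eps)\,w\,dS\to\int_{\partial\Omega} uw\,dS$.

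It then remains to identify $\int_{\partial\Omega} wu\,dS$ with $\|u\|_0^2$: since $u=Sw$, i.e. $Tu=Jw$, the function $u$ satisfies $(u,v)_0=\int_{\partial\Omega} wv\,dS$ for every $v\in H^1(\Omega)$, and the choice $v=u$ gives $\|u\|_0^2=\int_{\partial\Omega} wu\,dS$. Collecting the three limits, $\|u_\eps-E_\eps u\|_\eps^2\to\|u\|_0^2-2\|u\|_0^2+\|u\|_0^2=0$, which is $u_\eps\EC u$. The final assertion $S_\eps\EEC S$ is then immediate from Definition \ref{d:EE-convergence-ArCaLo}, since we have shown $S_\eps w_\eps\EC Sw$ whenever $w_\eps\EC w$. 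I expect the only genuinely delicate point to be the passage to the limit in $\int_{\partial\Omega_\eps} u_\eps\,E_\eps w\,dS$, which reuses the boundary change-of-variables estimates and Proposition \ref{p:7} from the proof of Lemma \ref{l:weak-convergence}; but because $w$ is fixed, this is essentially a verbatim repetition of Steps~3--4 there, the only added input being the strong $L^2(\partial\Omega)$-convergence of $B_\eps u_\eps$ coming from the compact trace embedding.
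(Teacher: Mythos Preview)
Your proof is correct and follows essentially the same approach as the paper: both expand $\|u_\eps-E_\eps u\|_\eps^2$ and show each of the three terms converges to $\|u\|_0^2$, handling the delicate term $\|u_\eps\|_\eps^2=\int_{\partial\Omega_\eps}w_\eps u_\eps\,dS$ by first replacing $w_\eps$ with $E_\eps w$ via a Cauchy--Schwarz argument, then passing to the limit using the boundary change-of-variables computation, Proposition~\ref{p:7}, and the weak convergence $B_\eps u_\eps\rightharpoonup u$ from Lemma~\ref{l:weak-convergence}. The only minor difference is in the treatment of the cross term $(u_\eps,E_\eps u)_\eps$: the paper passes through the full Step~3 machinery (\eqref{eq:passaggio-01} and \eqref{eq:bou-est}) to write it as $(B_\eps u_\eps,u)_0+o(1)$ before invoking weak convergence, whereas you go directly through the variational identity $(u_\eps,E_\eps u)_\eps=\int_{\partial\Omega_\eps}w_\eps\,E_\eps u\,dS$ and invoke Step~4 (\eqref{eq:passaggio-2}) with $\varphi=u$; your route is marginally more economical here but otherwise equivalent.
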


\begin{proof} We use the notation for $Q_A(\cdot,\cdot)$ and $Q_A(\cdot)$ introduced in \eqref{eq:bilinear} and
\eqref{eq:quadratic}.

We write
\begin{equation} \label{eq:conv-finale}
\|u_\eps-E_\eps u\|_{\eps}^2=\|u_\eps\|_{\eps}^2-2(u_\eps,E_\eps
u)_\eps+\|E_\eps u\|_{\eps}^2 \, .
\end{equation}
By \eqref{eq:passaggio-01}, \eqref{eq:bou-est} and
\eqref{eq:passaggio-1} and the fact that $B_\eps u_\eps \rightharpoonup u$ in
$H^1(\Omega)$ as proved in Lemma \ref{l:weak-convergence}, we obtain
\begin{equation} \label{eq:conv-1}
(u_\eps,E_\eps u)_\eps=(B_\eps u_\eps,u)_0+o(1)\to (u,u)_0=\|u\|_0^2 \, .
\end{equation}
Moreover, by Lemma \ref{l:1} we have
\begin{equation} \label{eq:conv-2}
\|E_\eps u\|_{\eps}^2\to \|u\|_{0}^2 \, .
\end{equation}
We now prove that $\|u_\eps\|_{\eps}^2\to \|u\|_{0}^2$. We write
\begin{align} \label{eq:57}
& \|u_\eps\|_\eps^2=\int_{\partial\Omega_\eps} w_\eps \, u_\eps \,
dS=\int_{\partial\Omega_\eps} E_\eps w \, u_\eps \, dS+o(1)
\end{align}
where the second identity can be obtained by proceeding exactly as
in \eqref{eq:sost} and exploiting the fact that
$\|u_\eps\|_{\eps}=O(1)$ as $\eps\to 0$ as we have shown in Step 1
of Lemma \ref{l:weak-convergence}.

We claim that
\begin{equation} \label{eq:claim-2}
\int_{\partial\Omega_\eps} E_\eps w\,  u_\eps \, dS\to
\int_{\partial\Omega} w \, u \, dS=(u,u)_0=\|u\|_0^2 \, .
\end{equation}
Once \eqref{eq:claim-2} is proved, combining \eqref{eq:conv-1}-\eqref{eq:claim-2} with
\eqref{eq:conv-finale} the proof of the lemma follows.
Therefore, in order to complete the proof of the lemma, we only have to prove the validity of \eqref{eq:claim-2}.

In order to estimate $\int_{\partial\Omega_\eps} E_\eps w \,
u_\eps \, dS$ we proceed as follows:
\begin{align} \label{eq:computation}
& \int_{\partial\Omega_\eps} E_\eps w \, u_\eps \, dS
=\sum_{i,j=1}^{s'} \int_{\partial\Omega_\eps \cap V_i \cap V_j}
w_{i}(\Psi_{\eps,i}(x)) u_{\eps,j}(x) \, dS
\\
\notag & =\sum_{i,j=1}^{s'}
\int_{\Psi_{\eps,j}(\partial\Omega_\eps \cap V_i \cap V_j)} w_{i}
(\Psi_{\eps,i}(\Psi_{\eps,j}^{-1}(y))) \widehat u_{\eps,j}(y)
W_{\eps,j}(y) \, dS \, .
\end{align}
with $W_{\eps,j}$ as in \eqref{eq:W-eps-j}.

Applying Proposition \ref{p:7} to $w$, exploiting the fact that
$B_\eps u_\eps \rightharpoonup u$ in $H^1(\Omega)$ and recalling
that by \eqref{eq:conv-per}, the trivial extension of
$W_{\eps,j}$ to the whole $\partial\Omega$ converges almost
everywhere to the function $\chi_{\partial\Omega \cap V_j}$
as explained in Step 3 of the proof of Lemma
\ref{l:weak-convergence}, as $\eps\to 0$, we have
\begin{align*}
& \int_{\partial\Omega_\eps} E_\eps w \, u_\eps \, dS
=\sum_{i,j=1}^{s'} \int_{\partial\Omega\cap V_i\cap V_j}
w_{i} \widehat u_{\eps,j} \, dS+o(1)\\
& =\sum_{i=1}^{s'} \int_{\partial\Omega\cap V_i} w_{i} B_\eps
u_\eps \, dS+o(1)=\sum_{i=1}^{s'} \int_{\partial\Omega\cap V_i}
w_{i} u \, dS+o(1)=\int_{\partial\Omega} wu \, dS+o(1) \, .
\end{align*}
This completes the proof of \eqref{eq:claim-2}.
\end{proof}

\begin{lemma} \label{l:compact} Let $\mathcal A$ be an atlas. Let
$\{\Omega_\eps\}_{0<\eps\le\eps_0}$ be a family of domains of
class $C^{0,1}(\mathcal A)$ and $\Omega$ a domain of class
$C^{0,1}(\mathcal A)$. Assume the validity of conditions
\eqref{eq:assumptions}, \eqref{eq:conv-per}. Let $w_\eps \in H^1(\Omega_{\eps})$ with
$0<\eps\le \eps_0$ be such that $\|w_\eps\|_{\eps}=1$.
Then $\{S_{\eps} w_\eps\}_{0<\eps\le \eps_0}$ is precompact in
the sense of Definition \ref{d:precompact-ArCaLo}. In particular,
by Definition \ref{d:CC-convergence-ArCaLo} and Lemma
\ref{l:E-conv} we have that $S_{\eps} \CC S$.
\end{lemma}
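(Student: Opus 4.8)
The plan is to re-run the arguments used for Lemmas \ref{l:weak-convergence} and \ref{l:E-conv}, noticing that those arguments never used the full force of the hypothesis $w_\eps\EC w$, but only the uniform bound on $\{w_\eps\}$ together with a weak limit of the pulled‑back functions. Accordingly, fix an arbitrary sequence $\eps_n\downarrow 0$; I will extract a subsequence (not relabelled) along which several weak limits coexist, and then show $E$-convergence of $S_{\eps_n}w_{\eps_n}$ along it. First set $u_\eps:=S_\eps w_\eps$. Testing $u_\eps=S_\eps w_\eps$ against $u_\eps$ and applying Cauchy--Schwarz on $\partial\Omega_\eps$, exactly as in Step~1 of the proof of Lemma \ref{l:weak-convergence}, gives $\|u_\eps\|_\eps\le\|w_\eps\|_\eps=1$, so $\{u_\eps\}$ is bounded. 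Then the estimates \eqref{eq:b-nabla-vol}--\eqref{eq:b-boundary}, and their obvious analogues applied to $w_\eps$, show that $\{B_\eps u_\eps\}$ and $\{B_\eps w_\eps\}$ are bounded in $H^1(\Omega)$; more precisely each localized pull-back $\widehat u_{\eps,i}$, $\widehat w_{\eps,i}$ (for $i=1,\dots,s'$) is bounded in $H^1(\Omega)$ with support in $\overline{V_i}$. Passing to a subsequence I may assume $B_{\eps_n}u_{\eps_n}\rightharpoonup u$, $B_{\eps_n}w_{\eps_n}\rightharpoonup\bar w$ in $H^1(\Omega)$ and $\widehat u_{\eps_n,i}\rightharpoonup\widehat u_i$, $\widehat w_{\eps_n,i}\rightharpoonup\widehat w_i$ in $H^1(\Omega)$; by compactness of the trace operator $H^1(\Omega)\to L^2(\partial\Omega)$ all of these converge strongly in $L^2(\partial\Omega)$, and on $\partial\Omega$ one has $\sum_i\widehat u_i=u$ and $\sum_i\widehat w_i=\bar w$ (the charts with index $>s'$ contribute nothing on the boundary).

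The next step is to identify the weak limit. I would pass to the limit in the identity $(u_{\eps_n},E_{\eps_n}\varphi)_{\eps_n}=\int_{\partial\Omega_{\eps_n}}w_{\eps_n}\,E_{\eps_n}\varphi\,dS$, valid for every $\varphi\in H^1(\Omega)$. For the left-hand side the chain \eqref{eq:passaggio-01}, \eqref{eq:bou-est}, \eqref{eq:passaggio-1} applies verbatim — it uses only the weak convergence $B_{\eps_n}u_{\eps_n}\rightharpoonup u$ and condition \eqref{eq:conv-per} — and yields $(u_{\eps_n},E_{\eps_n}\varphi)_{\eps_n}\to(u,\varphi)_0$. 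For the right-hand side I would localize with the partition of unity, change variables $y=\Psi_{\eps_n,j}(x)$ on $\partial\Omega_{\eps_n}$ (introducing the Jacobian factors $W_{\eps_n,j}$ of \eqref{eq:W-eps-j}), and then use \eqref{eq:Jac} together with Proposition \ref{p:7} and the $L^2(\partial\Omega)$ convergence of the $\widehat w_{\eps_n,j}$ to get $\int_{\partial\Omega_{\eps_n}}w_{\eps_n}\,E_{\eps_n}\varphi\,dS\to\int_{\partial\Omega}\bar w\,\varphi\,dS$. Hence $(u,\varphi)_0=\int_{\partial\Omega}\bar w\,\varphi\,dS$ for all $\varphi\in H^1(\Omega)$, i.e. $Tu=J\bar w$, so $u=S\bar w$; taking $\varphi=u$ gives the key identity $\|u\|_0^2=\int_{\partial\Omega}\bar w\,u\,dS$.

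It then remains to prove $u_{\eps_n}\EC u$, which I would do from the expansion $\|u_{\eps_n}-E_{\eps_n}u\|_{\eps_n}^2=\|u_{\eps_n}\|_{\eps_n}^2-2(u_{\eps_n},E_{\eps_n}u)_{\eps_n}+\|E_{\eps_n}u\|_{\eps_n}^2$. The third term tends to $\|u\|_0^2$ by \eqref{eq:conv-norm}, and the middle term tends to $(u,u)_0=\|u\|_0^2$ by the left-hand side limit above with $\varphi=u$. For the first term, I would localize $\|u_{\eps_n}\|_{\eps_n}^2=\int_{\partial\Omega_{\eps_n}}w_{\eps_n}u_{\eps_n}\,dS=\sum_{i,j=1}^{s'}\int_{\partial\Omega_{\eps_n}\cap V_i\cap V_j}\psi_j w_{\eps_n}\,\psi_i u_{\eps_n}\,dS$, change variables $y=\Psi_{\eps_n,j}(x)$, rewrite the $i$-th factor as $\widehat u_{\eps_n,i}\circ\Theta_{\eps_n,i,j}$, and pass to the limit using: the strong $L^2(\partial\Omega)$ convergence $\widehat w_{\eps_n,j}\to\widehat w_j$, the strong $L^2(\partial\Omega)$ convergence $\widehat u_{\eps_n,i}\circ\Theta_{\eps_n,i,j}\to\widehat u_i\,\chi_{\partial\Omega\cap V_i\cap V_j}$ from Proposition \ref{p:7}, and the a.e. bounded convergence $W_{\eps_n,j}\to\chi_{\partial\Omega\cap V_j}$ (from \eqref{eq:Jac} and \eqref{eq:assumptions}). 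Since a product of two strongly $L^2(\partial\Omega)$-convergent sequences converges in $L^1(\partial\Omega)$, summing over $i,j$ gives $\|u_{\eps_n}\|_{\eps_n}^2\to\int_{\partial\Omega}\bar w\,u\,dS=\|u\|_0^2$. Altogether $\|u_{\eps_n}-E_{\eps_n}u\|_{\eps_n}^2\to\|u\|_0^2-2\|u\|_0^2+\|u\|_0^2=0$, so $u_{\eps_n}\EC u$; since the sequence $\eps_n\downarrow0$ was arbitrary, $\{S_\eps w_\eps\}$ is precompact in the sense of Definition \ref{d:precompact-ArCaLo}, and the last assertion follows from Definition \ref{d:CC-convergence-ArCaLo} combined with Lemma \ref{l:E-conv}.

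The main obstacle is precisely the first term $\|u_{\eps_n}\|_{\eps_n}^2=\int_{\partial\Omega_{\eps_n}}w_{\eps_n}u_{\eps_n}\,dS$: since $w_\eps$ need not $E$-converge, this is an integral of a product of two objects converging only weakly, and a soft bound (Cauchy--Schwarz plus the limit of $\int_{\partial\Omega_{\eps_n}}u_{\eps_n}^2$) does not suffice to pin its limit to $\|u\|_0^2$. The resolution rests on two points: the compactness of the trace map, which upgrades the weak $H^1(\Omega)$ convergence of the localized pull-backs to \emph{strong} $L^2(\partial\Omega)$ convergence, so that the boundary product passes to the limit; and the identification $u=S\bar w$, which is what makes $\int_{\partial\Omega}\bar w\,u\,dS$ equal to $\|u\|_0^2$. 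Everything else is an adaptation of the estimates already established for Lemmas \ref{l:weak-convergence} and \ref{l:E-conv}.
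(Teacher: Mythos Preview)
Your proposal is correct and follows essentially the same approach as the paper's proof: bound $u_\eps=S_\eps w_\eps$, extract weak $H^1(\Omega)$ limits of the localized pull-backs $\widehat w_{\eps,i}$ and $\widehat u_{\eps,j}$ (upgraded to strong $L^2(\partial\Omega)$ limits via trace compactness), pass to the limit in $(u_\eps,E_\eps\varphi)_\eps=\int_{\partial\Omega_\eps}w_\eps E_\eps\varphi\,dS$ using Proposition~\ref{p:7} and the a.e.\ convergence of the Jacobian factors $W_{\eps,j}$, and then close with the three-term expansion of $\|u_\eps-E_\eps u\|_\eps^2$. The only cosmetic differences are that the paper works with the boundary limit $F=\sum_i F_i\in L^2(\partial\Omega)$ rather than the full $H^1$ limit $\bar w$ (so it never writes $u=S\bar w$, using only the identity $(\tilde u,\varphi)_0=\int_{\partial\Omega}F\varphi\,dS$), and that in the limit of $\int_{\partial\Omega_\eps}w_\eps u_\eps\,dS$ the paper applies Proposition~\ref{p:7} to the $w$-factor while you apply it to the $u$-factor---by symmetry either works.
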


\begin{proof} As in Lemma
\ref{l:weak-convergence} we put $u_\eps:=S_{\eps} w_\eps$.
Since $\|w_\eps\|_{\eps}=1$, proceeding as in the proof of Lemma \ref{l:weak-convergence},
one can show that $B_\eps u_\eps \rightharpoonup \widetilde u$ along
a sequence, for some $\widetilde u\in H^1(\Omega)$. We divide the
remaining part of the proof into four steps. We observe that, as in
the proof of Lemma \ref{l:weak-convergence}, $u_\eps$ satisfies \eqref{eq:u_eps} for any $\eps\in
(0,\eps_0]$.

{\bf Step 1.} In this step we pass to the limit in the right hand
side of \eqref{eq:u_eps}.

We proceed as follows:
\begin{align} \label{eq:computation-0-bis}
& \int_{\partial\Omega_\eps} w_\eps E_\eps \varphi \, dS=
\sum_{i,j=1}^{s'} \int_{\partial\Omega_\eps \cap V_i \cap V_j}
w_{\eps,i}(x) \varphi_{j}(\Psi_{\eps,j}(x)) \, dS
\\
\notag & =\sum_{i,j=1}^{s'}
\int_{\Psi_{\eps,j}(\partial\Omega_\eps \cap V_i \cap V_j)}
w_{\eps,i}(\Psi_{\eps,j}^{-1}(y)) \varphi_{j}(y) W_{\eps,j}(y) \,
dS \, .
\end{align}
with $W_{\eps,j}$ as in \eqref{eq:W-eps-j}. As in the proofs of Lemmas \ref{l:weak-convergence}-\ref{l:E-conv}, we have that the trivial extension of $W_{\eps,j}$ to
the whole $\partial\Omega$ converges almost everywhere to the
function $\chi_{\partial\Omega\cap V_j}$ and it remains
uniformly bounded as $\eps\to 0$ thanks to \eqref{eq:assumptions}.

Since $\|w_\eps\|_{\eps}=1$, by \eqref{eq:hat-w}
we
deduce that
$$
\|\widehat w_{\eps,i}\|_{0}=O(1) \qquad \text{as } \eps\to 0  \, , \ \text{for any } i\in \{1,\dots,s'\} \, ,
$$
see Step 2 in the proof of Lemma \ref{l:weak-convergence} for more details.

Hence, by the classical trace inequality for the space $H^1(\Omega)$, we also have
$$
\|\widehat w_{\eps,i}\|_{H^{1/2}(\partial\Omega)}=O(1) \qquad
\text{as } \eps\to 0  \ \text{for any } i\in \{1,\dots,s'\} \, .
$$
Since the embedding $H^{1/2}(\partial\Omega)\subset
L^2(\partial\Omega)$ is compact, $\{\widehat
w_{\eps,i}\}_{0<\eps\le \eps_0}$ is precompact in
$L^2(\partial\Omega)$.

Then, along a sequence $\eps_k\downarrow 0$,
we may assume that $\widehat w_{\eps_k,i}\to F_{i}$ in
$L^2(\partial\Omega)$ as $k\to +\infty$. For simplicity, in the
rest of the proof of the lemma we will omit the subindex $k$ and
we simply write $\widehat w_{\eps,i}\to F_{i}$ in
$L^2(\partial\Omega)$ as $\eps\to 0$.

Letting $\Theta_{\eps,i,j}$ be as in \eqref{eq:Thetaij}, we see that $\widehat
w_{\eps,i}(\Theta_{\eps,i,j}(y))=w_{\eps,i}(\Psi_{\eps,j}^{-1}(y))$
for any $y\in \Psi_{\eps,j}(\partial\Omega_\eps\cap V_i\cap V_j)$.

Then, applying Proposition \ref{p:7} to $\widehat w_{\eps,i}$, by
\eqref{eq:computation-0-bis} we obtain
\begin{align} \label{eq:PassLim-1}
\int_{\partial\Omega_\eps} w_\eps E_\eps \varphi \, dS\to
\sum_{i,j=1}^{s'} \int_{\partial\Omega\cap V_i\cap V_j} F_{i}
\varphi_j \, dS=\int_{\partial\Omega} F \, \varphi \, dS \qquad \text{as } \eps\to 0
 \end{align}
where we put $F=\sum_{i=1}^{s'} F_{i}$.

{\bf Step 2.} In this step we pass to the limit in the left hand
side of \eqref{eq:u_eps}.

One can proceed as in the proof of Step 3 in Lemma
\ref{l:weak-convergence}, where that argument was only based on the fact that $\|u_\eps\|_\eps=O(1)$ as $\eps\to 0$, as in the present case. Thus, \eqref{eq:passaggio-1} still holds true.

Combining \eqref{eq:passaggio-1}, \eqref{eq:PassLim-1} with
\eqref{eq:u_eps}, we infer
\begin{equation} \label{eq:F-nu}
(\widetilde u,\varphi)_0=\int_{\partial\Omega} F \varphi \,  dS \qquad \text{for any } \varphi \in H^1(\Omega)
\, .
\end{equation}

{\bf Step 3.} In this step we pass to the limit in
the right hand side of the following identity
\begin{equation} \label{eq:u-eps-bis}
(u_\eps,u_\eps)_\eps=\int_{\partial\Omega_\eps}
w_\eps u_\eps \, dS \, .
\end{equation}
As we did for \eqref{eq:computation-0-bis}, we write
\begin{align} \label{eq:conv-forte}
& \int_{\partial\Omega_\eps} w_\eps u_\eps \, dS
=\sum_{i,j=1}^{s'}
\int_{\Psi_{\eps,j}(\partial\Omega_\eps \cap V_i \cap V_j)}
w_{\eps,i}(\Psi_{\eps,j}^{-1}(y)) \widehat
u_{\eps,j}(y) W_{\eps,j}(y) \, dS
\end{align}
with $W_{\eps,j}$ as in Step 1, and
$\widehat u_{\eps,j}$ as in \eqref{eq:hat-w}.

Proceeding as in the proof of the validity of
\eqref{eq:b-nabla-vol} and \eqref{eq:b-boundary}, we deduce that $\|\widehat u_{\eps,j}\|_0=O(1)$ as $\eps\to 0$. Therefore,
passing to the limit along a subsequence $\{\eps_{k_n}\}$ of the sequence $\{\eps_k\}$ introduced in Step 1, for any $j\in \{1,\dots,s'\}$, there exists a function
$U_j\in H^1(\Omega)$ such that $\widehat u_{\eps_{k_n},j}\rightharpoonup
U_j$ in $H^1(\Omega)$ as $n\to +\infty$.

With the same notations of Step 1, we simply
write $\eps\to 0$ to denote the convergence along the subsequence $\{\eps_{k_n}\}$. By
\eqref{eq:conv-forte} and the compactness argument of Step 1, we then have
\begin{equation} \label{eq:conv-forte-2}
\int_{\partial\Omega_\eps} w_\eps u_\eps \, dS \to
\sum_{i,j=1}^{s'} \int_{\partial\Omega\cap V_i
\cap V_j} F_{i} U_j \, dS \qquad \text{as } \eps\to 0 \, .
\end{equation}
Since $B_\eps u_\eps=\sum_{j=1}^{s'} \widehat u_{\eps,j}$ on
$\partial\Omega$, from compactness of the trace map from $H^1(\Omega)$ into $L^2(\partial\Omega)$ and the uniqueness of the strong limit in
$L^2(\partial\Omega)$, one immediately obtains $\widetilde u=\sum_{j=1}^{s'}
U_j$ on $\partial\Omega$, which inserted into \eqref{eq:conv-forte-2} gives
\begin{equation}  \label{eq:conv-forte-3}
\int_{\partial\Omega_\eps} w_\eps u_\eps \, dS \to
 \int_{\partial\Omega} F \widetilde u\, dS  \qquad \text{as } \eps\to 0\, .
\end{equation}

{\bf Step 4.} In this step we conclude the proof of the lemma.

Choosing $\varphi=\widetilde u$ in \eqref{eq:F-nu} and combining
this with \eqref{eq:u-eps-bis}, \eqref{eq:conv-forte-3}, we obtain
as $\eps\to 0$ along an appropriate sequence
\begin{equation} \label{eq:conv-norm-1}
\|u_\eps\|_{\eps}^2=\int_{\partial\Omega_\eps}
w_\eps u_\eps \, dS\to \int_{\partial\Omega} F \widetilde u \, dS=\|\widetilde u\|_{0}^2 \, .
\end{equation}
On the other hand, by \eqref{eq:u_eps}, \eqref{eq:PassLim-1},
\eqref{eq:F-nu} with $\varphi=\widetilde u$, we obtain as $\eps\to
0$ along the same sequence converging to zero,
\begin{equation} \label{eq:conv-norm-2}
(u_\eps,E_\eps \widetilde u)_\eps=\int_{\partial\Omega_\eps} w_\eps E_\eps \widetilde u \, dS\to \int_{\partial\Omega} F\widetilde u \, dS
=\|\widetilde u\|_{0}^2 \, .
\end{equation}
Combining \eqref{eq:conv-norm-1} and \eqref{eq:conv-norm-2} with
Lemma \ref{l:1} (ii), we obtain
\begin{align*}
\|u_\eps-E_\eps \widetilde
u\|_{\eps}^2=\|u_\eps\|_{\eps}^2-2(u_\eps,E_\eps
\widetilde u)_\eps+\|E_\eps \widetilde u\|_{\eps}^2\to 0 \, .
\end{align*}
This proves that, along a sequence converging to zero, we have $u_\eps \EC \widetilde
u$ or equivalently that
$\{S_{\eps}w_\eps\}_{0<\eps\le \eps_0}$ is precompact in the
sense of Definition \ref{d:precompact-ArCaLo}. The proof of the lemma now follows from Lemma \ref{l:E-conv} and Definition \ref{d:CC-convergence-ArCaLo}.
\end{proof}

The proof of the theorem now follows combining Lemma \ref{l:compact}, which states the validity of the compact convergence $S_{\eps} \CC S$, with the abstract result stated in Theorem \ref{vaithm}.

As a bypass product of a number of results proved in this section, we have the following proposition
which we believe has its own interest  since it clarifies even more the meaning of $E$-convergence with respect to the operators  $E_{\eps}$ used above.

\begin{proposition}\label{intersec}
 Let $\mathcal A$ be an atlas. Let
$\{\Omega_\eps\}_{0<\eps\le\eps_0}$ be a family of domains of
class $C^{0,1}(\mathcal A)$, $\Omega$ a domain of class
$C^{0,1}(\mathcal A)$ and for any $\eps\in (0,\eps_0]$ let $E_\eps$ be the map defined in \eqref{eq:E-eps}. Assume  the validity of  condition  \eqref{eq:assumptions}, \eqref{eq:conv-per}.
If  $u_{\eps}\in H^1(\Omega_{\eps})$, $u\in H^1(\Omega)$ is  such that $u_{\eps}\EC u$ as $\eps \to 0 $ then
\begin{equation}\label{intersec0}
\| u_{\eps}- u\|_{H^1(\Omega_{\eps}\cap \Omega )}\to 0  \ \ {\rm and } \ \ \|u_{\eps}-E_{\eps} u   \|_{L^2(\partial \Omega_{\eps})}\to 0,
\end{equation}
as $\eps\to 0$.
\end{proposition}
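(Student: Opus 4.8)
The plan is to read off both convergences in \eqref{intersec0} from the hypothesis $\|u_\eps-E_\eps u\|_\eps\to 0$ together with the explicit pull‑back structure of the connecting operators $E_\eps$ built in Section~\ref{subsecoperatorsE}. The second convergence is immediate: by the definition of $\|\cdot\|_\eps$ in \eqref{eq:equiv-norms} one has $\|v\|_{L^2(\partial\Omega_\eps)}\le\|v\|_\eps$ for every $v\in H^1(\Omega_\eps)$, so $\|u_\eps-E_\eps u\|_{L^2(\partial\Omega_\eps)}\le\|u_\eps-E_\eps u\|_\eps\to 0$.

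For the first convergence, the starting point is to upgrade the $E$-convergence to full $H^1$ convergence over $\Omega_\eps$. By Lemma~\ref{complete}~(ii), with a constant that may be taken independent of $\eps$ since all the $\Omega_\eps$ lie in $C^{0,1}(\mathcal A)$ and hence have uniformly bounded diameters, we get $\|v\|_{H^1(\Omega_\eps)}^2\le(1+C)\|v\|_\eps^2$, whence $\|u_\eps-E_\eps u\|_{H^1(\Omega_\eps)}\to 0$. Next I would record, as in the discussion preceding the statement, the open set $K_\eps\subset\Omega\cap\Omega_\eps$ coming from the construction of $E_\eps$ — obtained by deleting, in every chart $j\le s'$, the thin collar lying between $\tilde g_{\eps,j}$ and $g_{\eps,j}$ — on which $(E_\eps u)(x)=u(x)$ (there all the local diffeomorphisms $\Psi_{\eps,j}$ reduce to the identity, since $h_{\eps,j}=0$ for $x_N\le\tilde g_{\eps,j}$, and the partition of unity sums to one), and whose complement has vanishing measure, $|(\Omega_\eps\cup\Omega)\setminus K_\eps|\to 0$, the latter because the collar thickness $k_\eps=\hat k\kappa_\eps\to 0$ by \eqref{eq:assumptions}. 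Splitting $\Omega_\eps\cap\Omega=K_\eps\cup((\Omega_\eps\cap\Omega)\setminus K_\eps)$: on $K_\eps$ we have $u_\eps-u=u_\eps-E_\eps u$, so $\|u_\eps-u\|_{H^1(K_\eps)}\le\|u_\eps-E_\eps u\|_{H^1(\Omega_\eps)}\to 0$; on the complement we write $u_\eps-u=(u_\eps-E_\eps u)+(E_\eps u-u)$ and bound the first term again by $\|u_\eps-E_\eps u\|_{H^1(\Omega_\eps)}$. Everything is thus reduced to showing $\|E_\eps u-u\|_{H^1(\Omega_\eps\cap\Omega)}\to 0$.

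This last step is the only non-routine point, and it is handled through the pull‑back nature of $E_\eps$. Writing $u_j=\psi_j u$, on $\Omega_\eps\cap\Omega$ the pieces with index $>s'$ cancel between $E_\eps u$ and $u$, so $E_\eps u-u=\sum_{j=1}^{s'}(\tilde u_{\eps,j}-u_j)$, and on $\Omega_\eps\cap\Omega\cap V_j$ one has $\tilde u_{\eps,j}-u_j=u_j\circ\Psi_{\eps,j}-u_j$, which vanishes on $r_j^{-1}(K_{\eps,j})$. On the remaining set $D_{\eps,j}:=(\Omega_\eps\cap\Omega\cap V_j)\setminus r_j^{-1}(K_{\eps,j})$, whose measure tends to $0$, I would estimate $\|u_j\circ\Psi_{\eps,j}-u_j\|_{H^1(D_{\eps,j})}\le\|u_j\circ\Psi_{\eps,j}\|_{H^1(D_{\eps,j})}+\|u_j\|_{H^1(D_{\eps,j})}$; the second summand tends to $0$ by absolute continuity of the Lebesgue integral, since $|\nabla u_j|^2+u_j^2\in L^1(\Omega)$ and $|D_{\eps,j}|\to 0$, while for the first the chain rule, the Jacobian bound \eqref{eq:stima-Jacob} and the derivative bound \eqref{eq:estimate-2} give $\|u_j\circ\Psi_{\eps,j}\|_{H^1(D_{\eps,j})}^2\le c\int_{\Psi_{\eps,j}(D_{\eps,j})}(|\nabla u_j|^2+u_j^2)\,dy$ with $|\Psi_{\eps,j}(D_{\eps,j})|\le\tfrac32|D_{\eps,j}|\to 0$, so this term also vanishes by absolute continuity. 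Summing over $j=1,\dots,s'$ and collecting the estimates of the previous paragraph yields $\|u_\eps-u\|_{H^1(\Omega_\eps\cap\Omega)}\to 0$. The main obstacle is precisely this control of $E_\eps u$ over the shrinking collar $(\Omega_\eps\cap\Omega)\setminus K_\eps$; it is overcome because there $E_\eps u$ is a bi‑Lipschitz pull‑back of the fixed function $u$ with uniformly controlled distortion, so the matter reduces to equi-integrability of the fixed density $|\nabla u|^2+u^2$. (Alternatively one may avoid the change of variables by using, from the proof of Lemma~\ref{l:1}, that $\int_{\Omega_\eps}|\nabla E_\eps u|^2\to\int_\Omega|\nabla u|^2$ and $\int_{\Omega_\eps}(E_\eps u)^2\to\int_\Omega u^2$, and subtracting the corresponding integrals over $K_\eps$, which tend to the same limits.)
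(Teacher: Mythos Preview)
Your proof is correct and follows essentially the same route as the paper's: both argue the boundary convergence directly from the definition of $\|\cdot\|_\eps$, then handle the $H^1$ convergence by the triangle inequality $u_\eps-u=(u_\eps-E_\eps u)+(E_\eps u-u)$, controlling the first piece via Lemma~\ref{complete}~(ii) and the second via $E_\eps u=u$ on $K_\eps$ together with $|(\Omega_\eps\cap\Omega)\setminus K_\eps|\to 0$. The only difference is that you spell out explicitly the change-of-variables and equi-integrability argument for $\|E_\eps u-u\|_{H^1((\Omega_\eps\cap\Omega)\setminus K_\eps)}\to 0$, whereas the paper condenses this into the phrase ``arguing as in the proof of Lemma~\ref{l:1}''.
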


\begin{proof}Let $K_{\eps}\subset \Omega_{\eps}\cap \Omega  $ be as in the proof of  Lemma~\ref{l:weak-convergence}. Recall  that
$E_{\eps}u=u$ on $ K_{\eps}$.

We focus the attention on the proof of the first convergence in \eqref{intersec0} since the second one is a trivial consequence of the definition of $\|\cdot\|_\eps$ and the definition of $E$-convergence, see \eqref{eq:equiv-norms} and Definition \ref{d:E-convergence-ArCaLo} respectively. In the rest of the proof, we denote by $C$ positive constants independent of $\eps$ which may vary from line to line. Combining \eqref{eq:assumptions} with Lemma \ref{complete} (ii) and arguing as in the proof of Lemma \ref{l:1} by exploiting the fact that $|(\Omega_{\eps}\cap \Omega)\setminus K_{\eps}|\to 0$ as $\eps\to 0$, we obtain
\begin{eqnarray*}\label{intersec1}
\| u_{\eps}- u\|_{H^1(\Omega_{\eps}\cap \Omega )}&
\le
& \| u_{\eps}- E_{\eps}u\|_{H^1(\Omega_{\eps}\cap \Omega )}+\| E_{\eps}u- u\|_{H^1(\Omega_{\eps}\cap \Omega )}\nonumber \\
&\le & \|u_{\eps}- E_{\eps}u\|_{H^1(\Omega_{\eps} )}+\| E_{\eps}u- u\|_{H^1((\Omega_{\eps}\cap \Omega)\setminus K_{\eps} )}\nonumber\\
&\le & C \| u_{\eps}- E_{\eps}u\|_{\eps}+ o(1)=o(1) \qquad \text{as } \eps\to 0.
\end{eqnarray*}
This completes the proof of the proposition.
\end{proof}

\section{Proof of Theorem \ref{t:main-3}} \label{s:p-t:main-3}

The proof of Theorem \ref{t:main-3} can be obtained with a
slightly different approach if compared with the proof of Theorem
\ref{t:main-1}. For simplicity we only mention the main
differences.

One can show that Lemmas \ref{l:weak-convergence}-\ref{l:compact}, with assumption \eqref{eq:conv-per} replaced by \eqref{eq:weak-L1}, still hold true with the appropriate changes: the
difference consists on the fact that the operator $S$ of Section
\ref{s:p-t:main-1} has to be replaced here by the operator
$S_\gamma$ and the scalar product $(\cdot,\cdot)_0$ in $H^1(\Omega)$ has to be replaced by the scalar product $(\cdot,\cdot)_\gamma$ defined in \eqref{eq:prod-gamma} (the same has to be done with the associated norms). For simplicity we do not write down in details the adaptations of the proofs of those three lemmas but we prefer to draw the attention only on the more delicate parts appearing in them.

We divide our explanation in three steps each of them corresponding to one of three lemmas mentioned above.

\medskip

{\bf Step 1.} In this step we state and prove the counterpart of Lemma \ref{l:weak-convergence}: let $w_\eps \EC w$, $u_\eps=S_\eps w_\eps$, $u=S_\gamma w$ and let $B_\eps$ be as in \eqref{inversodiE}. We have to show that $B_\eps u_\eps \rightharpoonup u$ in $H^1(\Omega)$ as $\eps\to 0$.

According with the notation used in the proof of Lemma \ref{l:weak-convergence}, let $\widetilde u$ be the weak limit in $H^1(\Omega)$ of $B_\eps u_\eps$ along a sequence converging to zero.

Let us proceed with the adaptation of \eqref{eq:bou-est-0}-\eqref{eq:bou-est}.

In the present setting we know that by \eqref{eq:weak-L1}-\eqref{eq:gamma}, $W_{\eps,j}\rightharpoonup \gamma \, \chi_{\partial\Omega\cap V_j}$ weakly in $L^1(\partial\Omega)$ but thanks to \eqref{eq:assumptions} and \cite[Remark 2.1]{ArBru} we actually have
\begin{equation} \label{eq:W-eps-j-conv}
W_{\eps,j}\rightharpoonup \gamma \, \chi_{\partial\Omega\cap V_j}  \qquad \text{weakly in } L^p(\partial\Omega)
\quad \text{as $\eps\to 0$}
\quad \text{for any } 1\le p<\infty \, .
\end{equation}

Looking at \eqref{eq:bou-est-0} we have that, along a sequence,
\begin{equation}\label{eq:u-cap}
  \widehat u_{\eps,j} \quad \text{is strongly convergent in $L^q(\partial\Omega)$ for any $1\le q<\tfrac{2(N-1)}{N-2}$}
\end{equation}
since $\|\widehat u_{\eps,j}\|_{H^1(\Omega)}=O(1)$ as $\eps\to 0$, where $\tfrac{2(N-1)}{N-2}$ is the critical trace exponent or equivalently the critical Sobolev exponent for the embedding $H^{1/2}(\partial\Omega)\subset L^q(\partial\Omega)$ with the usual understanding that $\frac{2(N-1)}{N-2}=\infty$ for $N=2$.
Finally we also have that $\varphi_i(\Psi_{\eps,i}(\Psi_{\eps,j}^{-1}(y)))\to \varphi_i \, \chi_{\partial\Omega\cap V_i\cap V_j}$ strongly in $L^2(\partial\Omega)$ thanks to Proposition \ref{p:7}.

Hence we may conclude that \eqref{eq:bou-est} has to be replaced by
\begin{align*}
  & \int_{\partial\Omega_\eps} u_\eps E_\eps \varphi \, dS
=\sum_{i,j=1}^{s'} \int_{\partial\Omega\cap V_i \cap V_j} \widehat
u_{\eps,j}(y) \, \varphi_i(y) W_{\eps,j}(y) \, dS+o(1)
=\int_{\partial\Omega} \widetilde u \varphi \, \gamma \, dS \, .
\end{align*}
Indeed, we recall that $\sum_{j=1}^{s'} \widehat u_{\eps,j}=B_\eps u_\eps$ on $\partial\Omega$ and that, in view of \eqref{eq:u-cap}, the trace of $B_\eps u_\eps$ is convergent to the trace of $\widetilde u$ strongly in $L^q(\partial\Omega)$ for any $1\le q<\frac{2(N-1)}{N-2}$, being $B_\eps u_\eps \rightharpoonup \widetilde u$ weakly in $H^1(\Omega)$.

This means that \eqref{eq:passaggio-1} has to be replaced by
\begin{equation} \label{eq:mod-1}
  (u_\eps,E_\eps \varphi)_\eps\to (\widetilde u,\varphi)_\gamma
\end{equation}
as $\eps\to 0$ along an appropriate sequence.

Similarly one can prove that \eqref{eq:passaggio-2} has to be replaced by
\begin{equation} \label{eq:mod-2}
  \int_{\partial\Omega_\eps} w_\eps \, E_\eps \varphi \, dS\to \int_{\partial\Omega} \gamma w \varphi\, dS
\end{equation}
as $\eps\to 0$ along an appropriate sequence.

Combining \eqref{eq:mod-1} and \eqref{eq:mod-2} we conclude that $\widetilde u$ satisfies
\begin{equation*}
  (\widetilde u,\varphi)_\gamma=\int_{\partial\Omega} \gamma w \varphi \, dS
\end{equation*}
and hence it coincides with the function $u$. The independence of $\widetilde u=u$ on the sequence converging to $0$ shows that $B_\eps u_\eps\rightharpoonup u$ in $H^1(\Omega)$ as $\eps\to 0$ in the usual sense thus completing the proof of Step 1.

\medskip

{\bf Step 2.} In this step we state and prove the counterpart of Lemma \ref{l:E-conv}. We have to show that $S_\eps \EEC S_\gamma$ as $\eps\to 0$. The main point is to prove the validity of the claim corresponding to \eqref{eq:claim-2}:
\begin{equation} \label{eq:new-claim-2}
  \int_{\partial\Omega_\eps} E_\eps w \, u_\eps \, dS\to \int_{\partial\Omega} \gamma w u\, dS=(u,u)_\gamma=\|u\|_\gamma^2
\end{equation}
where $w_\eps$, $w$, $u_\eps$ and $u$ are as in Step 1.

To this purpose one has to pass to the limit in \eqref{eq:computation}. This can be done thanks to \eqref{eq:W-eps-j-conv}, \eqref{eq:u-cap} and the fact that $w_i(\Psi_{\eps,i}(\Psi_{\eps,j}^{-1}(y)))\to w_i \, \chi_{\partial\Omega\cap V_\cap V_j}$ strongly in $L^2(\partial\Omega)$ thanks to Proposition \ref{p:7}. Following the proof of Lemma \ref{l:E-conv}, we easily obtain \eqref{eq:new-claim-2} and consequently also the convergence $u_\eps \EC u$ as $\eps\to 0$. This proves that $S_\eps \EEC S_\gamma$ as $\eps\to 0$.

\medskip

{\bf Step 3.} In this step we state and prove the counterpart of Lemma \ref{l:compact}. Let $w_\eps$ be such that $\|w_\eps\|_\eps=1$ and let $u_\eps=S_\eps w_\eps$. We have to prove the $E$-convergence of $u_\eps$ along a sequence.

We have to pass to the limit in \eqref{eq:computation-0-bis} and \eqref{eq:conv-forte} under the present assumptions.

Concerning \eqref{eq:computation-0-bis} we have that $w_{\eps,i}(\Psi_{\eps,j}^{-1}(y))\to F_i \, \chi_{\partial\Omega\cap V_i\cap V_j}$ strongly in $L^2(\partial\Omega)$, as explained in the proof of Lemma \ref{l:compact}, so that by \eqref{eq:W-eps-j-conv} we obtain
\begin{equation*}
  \int_{\partial\Omega_\eps} w_\eps \, E_\eps \varphi\, dS\to \int_{\partial\Omega} \gamma\, F\varphi \, dS
\end{equation*}
as $\eps\to 0$ along a sequence. The left hand side of \eqref{eq:u_eps} can be treated as in Step 1 thus giving rise to the identity
\begin{equation} \label{eq:id-ns}
  (\widetilde u,\varphi)_\gamma = \int_{\partial\Omega} \gamma F \varphi \, dS
\end{equation}
where $\widetilde u$ is as in Step 1.

The second crucial point in the adaptation of the proof of Lemma \ref{l:compact} is to pass to the limit in the right hand side of \eqref{eq:u-eps-bis} or equivalently in \eqref{eq:conv-forte}. We have again the two factors $w_{\eps,i}(\Psi_{\eps,j}^{-1}(y))$ and $W_{\eps,j}$ that can be treated as above but this time we have the term $\widehat u_{\eps,j}$ in place of $\varphi_j$ which however is strongly convergent in $L^q(\partial\Omega)$ for any $1\le q<\frac{2(N-1)}{N-2}$ as explained in Step 1. We then conclude that
\begin{equation*}
  \|u_\eps\|_\eps^2=\int_{\partial\Omega_\eps} w_\eps u_\eps \, dS\to \int_{\partial\Omega} \gamma F\widetilde u \, dS=\|\widetilde u\|_\gamma^2
\end{equation*}
as $\eps\to 0$ along a sequence where in the last identity we used \eqref{eq:id-ns} with $\varphi=\widetilde u$.

Proceeding as in the proof of Lemma \ref{l:compact}, we infer that $u_\eps\EC \widetilde u$ as $\eps\to 0$ along a sequence thus proving that $\{S_\eps w_\eps\}_{0<\eps\le \eps_0}$ is precompact in the sense of Definition \ref{d:precompact-ArCaLo}.

\medskip

Combining the three steps above, we conclude that $S_\eps \CC S_\gamma$ as $\eps \to 0$ thus completing the proof of the theorem as a consequence of the abstract result Theorem \ref{vaithm}.

\section{Proof of Theorem \ref{t:main-2}} \label{s:p-t:main-2}

We first state and prove two preliminary results.

\begin{lemma} \label{l:vanishing} Let $\Omega$ and $\{\Omega_\eps\}_{0<\eps\le \eps_0}$ be as in the statement of Theorem \ref{t:main-2}. Assume that \eqref{eq:th-main-2-0}
and \eqref{eq:th-main-2} hold true. Let $\{v_{\eps}\}_{0<\eps\le \eps_0}\subset H^1_{{\rm loc}}(\R^N)$ be a family of functions such that
\begin{equation*}
\int_D |\nabla v_\eps|^2 dx+\int_D v_\eps^2 \, dx =O(1), \qquad \text{as } \eps\to 0,
\end{equation*}
for some  bounded domain $D$ satisfying $D\supset \Omega\cup \bigcup_{\eps\in (0,\eps_0]} \Omega_\eps$.

Then we have:

\begin{itemize}

\item[(i)] there exist $\eps_k\downarrow 0$ and $v\in H^1(\Omega)$ such that $(v_{\eps_k})_{|\Omega}\rightharpoonup v$ weakly in $H^1(\Omega)$;

\medskip

\item[(ii)] if $\int_{\partial\Omega_{\eps_k}} v_{\eps_k}^2 \, dS=O(1)$ as $k\to +\infty$ then $v\in H^1_0(\Omega)$.

\end{itemize}

\end{lemma}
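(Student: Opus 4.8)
The plan is to obtain (i) by the standard weak-compactness argument and (ii) by a slicing argument designed around the blow-up condition \eqref{eq:th-main-2}. For (i), since $\Omega\subset D$ the hypothesis gives $\|v_\eps\|_{H^1(\Omega)}^2\le\int_D(|\nabla v_\eps|^2+v_\eps^2)\,dx=O(1)$, so $\{v_\eps|_\Omega\}$ is bounded in the Hilbert space $H^1(\Omega)$ and, along a suitable $\eps_k\downarrow0$, we have $v_{\eps_k}|_\Omega\rightharpoonup v$ weakly in $H^1(\Omega)$ for some $v\in H^1(\Omega)$.

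For (ii), I would first pass to a further subsequence (not relabelled) along which also $v_{\eps_k}\rightharpoonup\bar v$ weakly in $H^1(D)$ and, by Rellich, $v_{\eps_k}\to\bar v$ strongly in $L^2(D)$, necessarily with $\bar v|_\Omega=v$. Since the trace map $H^1(\Omega)\to L^2(\partial\Omega)$ is compact ($\partial\Omega$ being Lipschitz, see \cite{Necas}), the weak convergence in $H^1(\Omega)$ forces $v_{\eps_k}\to v$ strongly in $L^2(\partial\Omega)$; hence it suffices to prove $\int_{\partial\Omega}v_{\eps_k}^2\,dS\to0$, for then $\int_{\partial\Omega}v^2\,dS=0$ and $v\in H^1_0(\Omega)$ because $\Omega$ is of class $C^{0,1}$. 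Covering $\partial\Omega$ by the finitely many cuboids $V_1,\dots,V_{s'}$ of an atlas $\mathcal A$ with $V_j\cap\partial\Omega\neq\emptyset$ and writing $\tilde v_\eps:=v_\eps\circ r_j^{-1}$, one has $\int_{\partial\Omega\cap V_j}v_\eps^2\,dS=\int_{W_j}\tilde v_\eps(x',g_j(x'))^2\sqrt{1+|\nabla_{x'}g_j(x')|^2}\,dx'$ with bounded Jacobian weight ($g_j\in C^{0,1}(\overline{W_j})$); moreover, writing $\tilde v_\eps(x',g_j(x'))-\tilde v_\eps(x',g_{\eps,j}(x'))=\int_{g_{\eps,j}(x')}^{g_j(x')}\partial_{x_N}\tilde v_\eps(x',\tau)\,d\tau$ along a.e. vertical segment, Cauchy--Schwarz together with the fact that the slab between the two graphs lies in $\Omega\cup\Omega_\eps\subset D$ gives $\int_{W_j}\tilde v_\eps(x',g_j(x'))^2\,dx'\le2\int_{W_j}\tilde v_\eps(x',g_{\eps,j}(x'))^2\,dx'+2\|g_{\eps,j}-g_j\|_{L^\infty(W_j)}\int_D|\nabla v_\eps|^2\,dx$, the last term tending to $0$ by \eqref{eq:th-main-2-0}. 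So, chart by chart, everything reduces to showing $\int_{W_j}\tilde v_\eps(x',g_{\eps,j}(x'))^2\,dx'\to0$ as $\eps\to0$.

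To prove this, I would fix $t>0$ and set $A_{\eps,t}:=\{x'\in W_j:\sqrt{1+|\nabla_{x'}g_{\eps,j}(x')|^2}\le t\}$. On $W_j\setminus A_{\eps,t}$ the surface element exceeds $t$, and since $\partial\Omega_\eps\cap V_j$ is the graph of $g_{\eps,j}$ (which lies strictly inside $V_j$ because $a_{Nj}+\rho\le g_{\eps,j}\le b_{Nj}-\rho$),
\[
\int_{W_j\setminus A_{\eps,t}}\tilde v_\eps(x',g_{\eps,j}(x'))^2\,dx'\le\frac1t\int_{\partial\Omega_\eps}v_\eps^2\,dS\le\frac Ct
\]
by the standing hypothesis $\int_{\partial\Omega_{\eps_k}}v_{\eps_k}^2\,dS=O(1)$. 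On $A_{\eps,t}$ we have $|A_{\eps,t}|\to0$ by \eqref{eq:th-main-2}; for a.e. $x'$ and any $0<h<\rho$, averaging the identity $\tilde v_\eps(x',g_{\eps,j}(x'))=\tilde v_\eps(x',s)+\int_s^{g_{\eps,j}(x')}\partial_{x_N}\tilde v_\eps(x',\tau)\,d\tau$ over $s\in(g_{\eps,j}(x')-h,g_{\eps,j}(x'))$, squaring, and integrating over $x'\in A_{\eps,t}$ yields
\[
\int_{A_{\eps,t}}\tilde v_\eps(x',g_{\eps,j}(x'))^2\,dx'\le\frac2h\int_{R_{\eps,t,h}}v_\eps^2\,dx+2h\int_D|\nabla v_\eps|^2\,dx,
\]
where $R_{\eps,t,h}$ is (the image under $r_j^{-1}$ of) the tube $\{(x',x_N):x'\in A_{\eps,t},\,g_{\eps,j}(x')-h<x_N<g_{\eps,j}(x')\}$, contained in $\Omega_\eps\cap V_j\subset D$ and of measure $h\,|A_{\eps,t}|\to0$. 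Since $v_\eps\to\bar v$ strongly in $L^2(D)$ and $|R_{\eps,t,h}|\to0$, the first term tends to $0$ as $\eps\to0$ for fixed $h$ (absolute continuity of the integral of $\bar v^2$), while the second is $\le2Ch$; letting $h\downarrow0$ gives $\int_{A_{\eps,t}}\tilde v_\eps(x',g_{\eps,j}(x'))^2\,dx'\to0$. Hence $\limsup_{\eps\to0}\int_{W_j}\tilde v_\eps(x',g_{\eps,j}(x'))^2\,dx'\le C/t$ for every $t>0$, so this limit is $0$, completing the proof.

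The one genuinely delicate point is the bad set $A_{\eps,t}$: on its complement the $L^2$-bound on the boundary traces of $v_\eps$ combined with the blow-up \eqref{eq:th-main-2} settles things at once, but on $A_{\eps,t}$ the surface element carries no information and, since \eqref{eq:th-main-2} forces $\|\nabla_{x'}g_{\eps,j}\|_{L^\infty(W_j)}\to\infty$, no uniform trace inequality is available. The slicing step circumvents this by trading the trace over the wildly oscillating graph above $A_{\eps,t}$ for a bulk $L^2$-integral over a tube of vanishing volume, where the strong $L^2(D)$ convergence of $v_\eps$ takes over; this is exactly where the interplay between \eqref{eq:th-main-2-0} and \eqref{eq:th-main-2} enters.
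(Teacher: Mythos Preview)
Your proof is correct and shares the paper's overall strategy---split $W_j$ according to whether the surface element $\sqrt{1+|\nabla_{x'}g_{\eps,j}|^2}$ is $\le t$ or $>t$, control the large--surface--element part by $C/t$ using the assumed bound on $\int_{\partial\Omega_\eps}v_\eps^2\,dS$, and send $t\to\infty$---but handles the small--surface--element set $A_{\eps,t}$ by a genuinely different device. The paper never introduces your auxiliary tube $R_{\eps,t,h}$ and height parameter $h$; instead it transfers everything to the \emph{fixed} boundary $\partial\Omega$: via the same Newton--Leibniz identity you use, it replaces $w_\eps(x',g_{\eps,j}(x'))$ by $w_\eps(x',g_j(x'))$ up to an $o(1)$ error, then invokes the strong convergence $v_{\eps_k}\to v$ in $L^2(\partial\Omega)$ (compact trace map on the fixed $\Omega$) to pass to the \emph{fixed} function $w(x',g_j(x'))$, whose integral over $A_{\eps,t}$ tends to $0$ by absolute continuity since $|A_{\eps,t}|\to 0$. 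Your route replaces this ``go to the fixed boundary'' step by ``go to the bulk'': the slicing estimate converts the trace over the oscillating graph into a volume integral on $R_{\eps,t,h}$, and you then use strong $L^2(D)$--convergence from Rellich together with $|R_{\eps,t,h}|\to 0$. Both mechanisms ultimately rest on comparing the troublesome trace to a fixed $L^2$--function integrated over a set of vanishing measure. The paper's version is slightly more economical---only the compact trace on $\Omega$ (which you already invoked) is needed, and no extra limit in $h$---whereas your Rellich step tacitly requires $D$ to support a compact embedding $H^1(D)\hookrightarrow L^2(D)$; this is harmless here, since one may always replace $D$ by a Lipschitz subdomain still containing all $\Omega_\eps$, but it is worth making explicit.
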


\begin{proof} For simplicity, throughout the proof of the lemma, the restrictions $(v_\eps)_{|\Omega_\eps}$ and $(v_\eps)_{|\Omega}$ will be simply denoted by $v_\eps$.

We first observe that
\begin{equation} \label{eq:bound-omega}
\|v_{\eps}\|_{H^1(\Omega)}=O(1) \qquad \text{as } \eps\to 0
\end{equation}
being $\Omega\subseteq D$.
The proof of (i) then follows by the reflexivity of $H^1(\Omega)$.

In order to prove (ii) we have to show that the function $v$ introduced in (i) belongs to $H^1_0(\Omega)$. By compactness of the trace map from $H^1(\Omega)$ into $L^2(\partial\Omega)$ we deduce that
\begin{equation} \label{eq:conv-boundary}
v_{\eps_k} \to v \qquad \text{strongly in } L^2(\partial\Omega) \qquad \text{as } k\to +\infty \, .
\end{equation}
Letting for any $j=1,\dots,s'$
\begin{align*}
\Gamma_{\eps,j}:=r_j^{-1}\left(\{(x',x_N):x'\in W_j \ \ \text{and} \ \ x_N=g_{\eps,j}(x')\}\right)\subseteq \partial \Omega_\eps \, ,
\end{align*}

\begin{align*}
\Gamma_{j}:=r_j^{-1}\left(\{(x',x_N):x'\in W_j \ \ \text{and} \ \ x_N=g_{j}(x')\}\right)\subseteq \partial \Omega \, ,
\end{align*}

\begin{align*}
w_{\eps}(x):=v_\eps(r_j^{-1}(x)) \, , \quad w(x)=v(r_j^{-1}(x)) \qquad \text{for any } x\in r_j(\Omega)
\end{align*}

\begin{align*}
& W_{t,\eps,j}:=\left\{x'\in W_j:\sqrt{1+|\nabla_{x'} \, g_{\eps,j}(x')|^2}\le t \right\}  \, , \qquad W_{t,\eps,j}^c:=W_j\setminus W_{t,\eps,j} \, ,
\end{align*}
omitting for simplicity the subindex $k$ we have
\begin{align} \label{eq:primo-passo}
O(1)& =\int_{\partial\Omega_{\eps}} v_{\eps}^2 \, dS\ge \int_{\Gamma_{\eps,j}} v_{\eps}^2 \, dS
=\int_{W_j} w_{\eps}^2(x',g_{\eps,j}(x')) \sqrt{1+|\nabla_{x'}g_{\eps,j}(x')|^2} \, dx' \\[8pt]
\notag & =\int_{W_{t,\eps,j}} w_{\eps}^2(x',g_{\eps,j}(x')) \sqrt{1+|\nabla_{x'}g_{\eps,j}(x')|^2} \, dx'
+\int_{W_{t,\eps,j}^c} w_{\eps}^2(x',g_{\eps,j}(x')) \sqrt{1+|\nabla_{x'}g_{\eps,j}(x')|^2} \, dx'  \, .
\end{align}
Inspired by \cite{DancerDaners} we estimate the first term in the second line of \eqref{eq:primo-passo}:
\begin{align} \label{eq:gs}
& \left(\int_{W_{t,\eps,j}} w_{\eps}^2(x',g_{\eps,j}(x')) \sqrt{1+|\nabla_{x'}g_{\eps,j}(x')|^2} \, dx'\right)^{1/2} \\[8pt]
&  \notag \le t^{\frac 12} \left[\left(\int_{W_j} |w_\eps(x',g_{\eps,j}(x'))-w_\eps(x',g_j(x'))|^2 dx'\right)^{\frac 12}
+\left(\int_{W_j} |w_\eps(x',g_{j}(x'))-w(x',g_j(x'))|^2 dx'\right)^{\frac 12}   \right. \\[8pt]
& \notag \qquad  \left. +\left(\int_{W_{t,\eps,j}}
w^2(x',g_j(x'))\,  dx'  \right)^{\frac 12}    \right] \, .
\end{align}
The second term in the right hand side of \eqref{eq:gs} converges to zero thanks to \eqref{eq:conv-boundary}
and the third term does the same since the measure of $W_{t,\eps,j}$ converges to zero thanks to \eqref{eq:th-main-2}. For what it concerns the first one we have
\begin{align} \label{eq:intermedio}
& \int_{W_j} |w_\eps(x',g_{\eps,j}(x'))-w_\eps(x',g_j(x'))|^2
dx'\le \int_{W_j} |g_{\eps,j}(x')-g_j(x')|
\left|\int_{g_j(x')}^{g_{\eps,j}(x')} \left|\tfrac{\partial w_\eps}{\partial t}(x',t) \right|^2 dt \right|dx' \\[8pt]
\notag & \qquad \le \|g_{\eps,j}-g_j\|_{L^\infty(W_j)} \ \int_{D} |\nabla v_\eps|^2 dx=O(1) \cdot  \|g_{\eps,j}-g_j\|_{L^\infty(W_j)}\to 0
\end{align}
as $\eps\to 0$ thanks to \eqref{eq:th-main-2-0}. This proves that the left hand side of \eqref{eq:gs} tends to zero as $\eps\to 0$ along the prescribed sequence.

Combining this fact with \eqref{eq:primo-passo}, we infer that there exists a positive constant $C$ independent of $\eps$ and $t$ such that
\begin{align} \label{eq:Cge}
C\ge \int_{W_{t,\eps,j}^c} w_{\eps}^2(x',g_{\eps,j}(x')) \sqrt{1+|\nabla_{x'}g_{\eps,j}(x')|^2} \, dx'+o(1)\ge
t \int_{W_{t,\eps,j}^c} w_{\eps}^2(x',g_{\eps,j}(x'))  \, dx'+o(1) \, .
\end{align}
We claim that
\begin{align} \label{eq:claim-W}
\int_{W_{t,\eps,j}^c} w_{\eps}^2(x',g_{\eps,j}(x'))  \, dx'=\int_{W_{t,\eps,j}^c} w^2(x',g_{j}(x'))  \, dx'+o(1)  \, .
\end{align}
Indeed by \eqref{eq:conv-boundary} and \eqref{eq:intermedio} we have
\begin{align*}
& \left| \left(\int_{W_{t,\eps,j}^c} w_{\eps}^2(x',g_{\eps,j}(x'))  \, dx'\right)^{\frac 12}
-\left(\int_{W_{t,\eps,j}^c} w^2(x',g_{j}(x'))  \, dx'\right)^{\frac 12}  \right| \\[8pt]
& \le \left(\int_{W_{t,\eps,j}^c} |w_{\eps}(x',g_{\eps,j}(x'))-w(x',g_j(x'))|^2  \, dx'\right)^{\frac 12} \\[8pt]
&  \le  \left(\int_{W_{j}} |w_{\eps}(x',g_{\eps,j}(x'))-w_\eps(x',g_{j}(x'))|^2  \, dx'\right)^{\!\!\! \frac 12}
\!\!+\! \left(\int_{W_{j}} |w_{\eps}(x',g_{j}(x'))-w(x',g_{j}(x'))|^2  \, dx'\right)^{\!\!\! \frac 12}\!\!=o(1) \, ,
\end{align*}
thus proving the validity of \eqref{eq:claim-W}.

Inserting \eqref{eq:claim-W} into \eqref{eq:Cge} and taking into account that
\begin{equation*}
\int_{W_{t,\eps,j}} w^2(x',g_{j}(x'))  \, dx'=o(1) \qquad \text{as } \eps\to 0\, ,
\end{equation*}
as explained after \eqref{eq:gs}, we obtain
\begin{align*}
C\ge t  \biggl( \int_{W_{t,\eps,j}^c} w^2(x',g_{j}(x'))  \, dx'+o(1) \biggr)+o(1)
=t \biggl(\int_{W_j}  w^2(x',g_{j}(x'))  \, dx'+o(1)\biggr)+o(1)
\end{align*}
as $\varepsilon \to 0$,
which implies
\begin{equation*}
\int_{W_j}  w^2(x',g_{j}(x'))  \, dx'\le \frac{C+o(1)}{t}+o(1)  \qquad \text{for any } t>0 \, ,
\end{equation*}
as $\varepsilon \to 0$. Here the quantities denoted by $o(1)$ do not depend on $t$.
Letting $t\to +\infty$, for any $j=1,\dots,s'$, we infer that the trace of $w$ on $r_j(\Gamma_{j})$ vanishes identically which, in turn, implies that
the trace of $v$ vanishes identically on $\Gamma_{j}$. Since the vanishing of $v$ on $\Gamma_j$ occurs for any $j\in \{1,\dots,s'\}$, this shows that $v\in H^1_0(\Omega)$.
\end{proof}

\begin{lemma} \label{l:l-i} For any integer $n\ge 0$ there exist $n+1$ functions $\varphi_0,\dots \varphi_n$ in $C^{\infty}(\R^N)$ such that, for any bounded domain $\Omega\subset \R^N$, $\varphi_0,\dots , \varphi_n$ are linearly independent when restricted to $\partial\Omega$ in the sense that if $a_0,\dots,a_n\in \R$ are such that
\begin{equation} \label{eq:con-l-i}
a_0\varphi_0(x)+\dots +a_n\varphi_n(x)=0 \qquad \text{for any } x\in \partial\Omega \, ,
\end{equation}
then $a_0=\dots=a_n=0$.
\end{lemma}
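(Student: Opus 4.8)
The plan is to exhibit one universal choice of functions that works simultaneously for every bounded domain. Set $\varphi_k(x):=x_1^k$ for $k=0,1,\dots,n$, where $x=(x_1,\dots,x_N)\in\R^N$; these are polynomials, hence lie in $C^\infty(\R^N)$. Let $\pi:\R^N\to\R$ denote the projection $\pi(x)=x_1$. The key reduction is that the statement follows once we know that $\pi(\partial\Omega)$ is an \emph{infinite} subset of $\R$ for every bounded domain $\Omega\subset\R^N$: indeed, if $a_0,\dots,a_n\in\R$ satisfy $a_0\varphi_0(x)+\dots+a_n\varphi_n(x)=0$ for all $x\in\partial\Omega$, then the one-variable polynomial $p(t):=\sum_{k=0}^n a_k t^k$, of degree at most $n$, vanishes on $\pi(\partial\Omega)$; if the latter is infinite then $p$ has infinitely many roots, so $p\equiv 0$ and $a_0=\dots=a_n=0$.

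It thus remains to prove that $\pi(\partial\Omega)$ is infinite, and I would do this by establishing the slightly stronger equality $\pi(\partial\Omega)=\pi(\overline{\Omega})$; infiniteness then follows at once because $\Omega$ contains a ball $B$, and $\pi(B)$ is a nonempty open interval. The inclusion $\pi(\partial\Omega)\subseteq\pi(\overline{\Omega})$ is trivial, so the point is the reverse one, which I would argue by contradiction. Suppose there is $c\in\pi(\overline{\Omega})\setminus\pi(\partial\Omega)$ and consider the affine hyperplane $H_c:=\{x\in\R^N:x_1=c\}$. Then $H_c\cap\overline{\Omega}\neq\emptyset$ while $H_c\cap\partial\Omega=\emptyset$, so the set $S:=H_c\cap\overline{\Omega}=H_c\cap\Omega$ is at once open in $H_c$ (being the trace on $H_c$ of the open set $\Omega$), closed in $H_c$ (being the trace of the closed set $\overline{\Omega}$), nonempty, and bounded. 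But $H_c$ is homeomorphic to $\R^{N-1}$, which is connected since $N\geq 2$, so its only subsets that are simultaneously open and closed are $\emptyset$ and $H_c$ itself; the first possibility is ruled out by $S\neq\emptyset$ and the second by the boundedness of $S$. This contradiction yields $\pi(\overline{\Omega})\subseteq\pi(\partial\Omega)$, hence $\pi(\partial\Omega)=\pi(\overline{\Omega})$, which is infinite.

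I do not expect any real obstacle here; the only point requiring a little care is the connectedness step applied to the slice $H_c$, where the standing hypothesis $N\geq 2$ enters essentially (for $N=1$ a slice is a single point and the argument breaks down, but that case lies outside the scope of the paper). It is also worth remarking that the proof never uses connectedness of $\Omega$, only that $\Omega$ is bounded, open and nonempty, so the conclusion holds for arbitrary such sets; and that any $n+1$ polynomials in the single variable $x_1$ that are linearly independent as polynomials would do equally well in place of $1,x_1,\dots,x_1^n$.
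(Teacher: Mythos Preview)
Your proof is correct and follows essentially the same route as the paper's: the same choice $\varphi_k(x)=x_1^k$, the same reduction to a one-variable polynomial in $x_1$, and the same geometric fact that a hyperplane $\{x_1=c\}$ meeting $\overline{\Omega}$ must meet $\partial\Omega$ when $\Omega$ is bounded. The paper states this last implication in one line (``intersects $\overline{\Omega}$, hence also $\partial\Omega$ if $\Omega$ is bounded''), while you supply a clean connectedness argument for it; apart from this added detail and a minor reorganization (you show $\pi(\partial\Omega)$ is infinite directly rather than arguing by contradiction on the coefficients), the two proofs coincide.
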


\begin{proof} We may construct explicitly the functions $\varphi_0,\dots,\varphi_n$ choosing for example
\begin{align} \label{eq:def-phi}
& \varphi_0(x)=1 \quad \text{for any } x\in \R^N  \, , \\[7pt]
\notag & \varphi_k(x)=x_1^k  \quad \text{and for any } x=(x_1,\dots,x_N)\in \R^N, \,  \text{for any } k\in \{1,\dots,n\} \, .
\end{align}
Let us prove that this functions are linearly independent when restricted to $\partial\Omega$.

To this purpose, let $a_0,\dots,a_n\in \R$ be such that
\eqref{eq:con-l-i} holds true and define the polynomial in one variable $P(t)=a_0+\sum_{k=1}^n a_k t^k$. Consider also the corresponding function $\Phi:\R^N\to \R$ defined by $\Phi(x)=P(x_1)$ for any $x=(x_1,\dots,x_N)\in \R^N$.

Suppose by contradiction that $(a_0,\dots,a_n)\neq (0,\dots,0)$ so that $P$ is not the null polynomial.

Since the degree of the polynomial $P$ is at most $n$, let $t_1,\dots,t_m$, $0\le m\le n$, be its real roots with the convention that when $m=0$ the polynomial $P$ has no real roots. Then we have that
\begin{equation} \label{eq:null-set}
\{x\in \R^N:\Phi(x)=0\}=\bigcup_{k=1}^m \Pi_k
\end{equation}
with $\Pi_k=\left\{x=(x_1,\dots,x_N)\in \R^N: x_1=t_k\right\}$.

By \eqref{eq:con-l-i}, \eqref{eq:def-phi} and \eqref{eq:null-set}, we deduce that $\partial\Omega\subset \bigcup_{k=1}^m \Pi_k$. This means that the boundary of the domain $\Omega$ is contained in a finite union of parallel hyperplanes of $\R^N$ and this contradicts the fact that $\Omega$ is bounded.
Indeed, there exists $c\in \R\setminus \{t_1,\dots,t_m\}$ such that the hyperplane $\Pi_c:=\{x=(x_1,\dots,x_N):x_1=c\}$ intersects the set $\overline\Omega$, hence also $\partial \Omega$ if $\Omega$ is bounded, but this contradicts the condition  $\partial\Omega\subset \bigcup_{k=1}^m \Pi_k$ found above.
\end{proof}

{\bf End of the proof of Theorem \ref{t:main-2}.} Let $n\ge 1$ and consider the corresponding eigenvalues $\lambda_n^\eps$. Let $\varphi_0,\dots,\varphi_n$ be $n+1$ functions as in Lemma \ref{l:l-i}. We introduce the following family of subspaces of $H^1(\Omega_\eps)$
\begin{equation*}
V_\eps:={\rm span}\{(\varphi_0)_{|\Omega_\eps},\dots,(\varphi_n)_{|\Omega_\eps}\} \, .
\end{equation*}
By Lemma \ref{l:l-i} we know that $\varphi_0,\dots,\varphi_n$ are linearly independent as functions defined over $\partial\Omega_\eps$ and in particular as functions defined over $\Omega_\eps$. This implies ${\rm dim}(V_\eps)=n+1$ and $V_\eps\cap H^1_0(\Omega_\eps)=\{0\}$ for any $\eps\in (0,\eps_0]$.

Therefore by (ii)-(iii) in Proposition \ref{p:min-max}, we infer
\begin{equation} \label{eq:est-above}
\lambda_n^\eps\le \max_{v\in V_\eps\setminus\{0\}} \frac{\int_{\Omega_\eps} |\nabla v|^2 dx}{\int_{\partial\Omega_\eps} v^2 dS} \, .
\end{equation}
Let $a_0^\eps,\dots,a_n^\eps\in \R$ be such that $v_\eps:=\sum_{j=0}^n a_j^\eps \varphi_j$ satisfies
\begin{equation} \label{eq:achieved}
\frac{\int_{\Omega_\eps} |\nabla v_\eps|^2 dx}{\int_{\partial\Omega_\eps} v_\eps^2 dS}
=\max_{v\in V_\eps\setminus\{0\}} \frac{\int_{\Omega_\eps} |\nabla v|^2 dx}{\int_{\partial\Omega_\eps} v^2 dS} \, .
\end{equation}
It is not restrictive, up to normalization, to assume that $\sum_{j=0}^n (a_j^\eps)^2=1$.

Let $D$ be a bounded domain such that $D\supset \Omega\cup \bigcup_{\eps\in (0,\eps_0]} \Omega_\eps$ whose existence easily follows by \eqref{eq:th-main-2-0}.

First of all, by the Cauchy-Schwarz inequality we have
\begin{align} \label{eq:bound-grad}
& \int_{D} |\nabla v_\eps|^2 dx\le \sum_{j=0}^{n} \int_{D} |\nabla \varphi_j|^2 dx =O(1) \qquad
\text{as } \eps\to 0
\, , \\[8pt]
\notag & \int_{D} v_\eps^2 dx\le \sum_{j=0}^{n}  \int_{D} \varphi_j^2 \, dx =O(1) \qquad \text{as } \eps\to 0 \, .
\end{align}
We claim that
\begin{equation} \label{eq:claim-3}
\int_{\partial\Omega_\eps} v_\eps^2 dS\to +\infty \qquad \text{as } \eps\to 0 \, .
\end{equation}
Suppose by contradiction that there exists a sequence
$\eps_k\downarrow 0$ such that $\int_{\partial\Omega_{\eps_k}} v_{\eps_k}^2 dS$ remains bounded as $k\to +\infty$.

Then, by \eqref{eq:bound-grad} and Lemma \ref{l:vanishing}, we
deduce that there exists $v\in H^1(\Omega)$ such that
\begin{equation} \label{eq:weak-conv}
v_{\eps_k} \rightharpoonup v \qquad \text{weakly in } H^1(\Omega)
\end{equation}
and moreover $v\in H^1_0(\Omega)$. Actually the weak convergence in \eqref{eq:weak-conv} is strong since the sequence of functions $\{(v_{\eps_k})_{|\Omega}\}$ lives in a finite dimensional space. By possibly passing to a subsequence,
we may assume that there exist $a_0,\dots,a_n\in \R$ such that
\begin{equation} \label{eq:conv-coeff}
a_j^{\eps_k}\to a_j \qquad \text{as } k\to +\infty \, , \ \text{for any } j\in \{0,\dots,n\} \, .
\end{equation}
Clearly the normalization condition $\sum_{j=0}^n (a_j^{\eps_k})^2=1$ implies
\begin{equation} \label{eq:normalization}
 \sum_{j=0}^n (a_j)^2=1 \, .
\end{equation}

Combining \eqref{eq:weak-conv} and \eqref{eq:conv-coeff} we deduce that $v=\sum_{j=0}^n a_j\varphi_j$.
Since $v\in H^1_0(\Omega)$, we have that condition \eqref{eq:con-l-i} on $\partial\Omega$ is satisfied and hence we infer
$$
a_0=\dots=a_n=0
$$
thus contradicting \eqref{eq:normalization}. This completes the proof of claim \eqref{eq:claim-3}.

The proof of the theorem now follows immediately combining \eqref{eq:est-above}, \eqref{eq:achieved}, \eqref{eq:bound-grad} and \eqref{eq:claim-3}.

\section{Proof of Proposition \ref{p:comparison}} \label{s:p-p:comparison}

{\bf Proof of (i).} It can be proved with a simple computation.

\medskip

{\bf Proof of (ii).} The fact that $g_\eps$ converges uniformly to
zero as $\eps\to 0$ and that the first order partial derivatives
remain uniformly bounded as $\eps\to 0$, follows immediately
after a simple computation. It remains to prove the weak
$L^1$-convergence of the surface element of $\partial\Omega_\eps$.
i.e. $\sqrt{1+|\nabla_{x'}b(x'/\eps)|^2}$. This is a standard fact for which we  refer {e.g.,} to  \cite{lukkawall}.  However, since we need to introduce some notation for the proof of part (iii) and, having that notation,  the proof takes only a few lines, we include a proof here for the convenience of the reader.

We actually prove the weak convergence in $L^p(W)$ for any
$1<p<\infty$, i.e.
\begin{equation} \label{eq:weak-Lp}
\int_W \sqrt{1+|\nabla_{x'}b(x'/\eps)|^2} \, \varphi(x')\, dx'\to
C_b \int_W \varphi(x')\, dx' \qquad \text{for any } \varphi\in
L^{\frac p{p-1}}(W)
\end{equation}
where $C_b:=\int_Y \sqrt{1+|\nabla_{x'}b(y')|^2} \, dy'$.

By density of $C^\infty_c(W)$ in $L^p(W)$ it is enough to prove
\eqref{eq:weak-Lp} for any $\varphi\in C^\infty_c(W)$.

For any $\eps>0$ let us define the family $\mathcal Q_\eps$ of all
$(N-1)$-dimensional closed cubes where every $Q\in \mathcal
Q_\eps$ is in the form $Q=\overline{\eps(Y+z)}$ with $z\in \mathbb
Z^{N-1}$. For every $\eps>0$ let $n_\eps$ be the number of cubes in
$\mathcal Q_\eps$ contained in $W$; let us denote these cubes by
$Q_{\eps,1}\, , \, \dots \, , \, Q_{\eps,n_\eps}$ so that
$Q_{\eps,j}=\overline{\eps(Y+z_{\eps,j})}$ with $z_{\eps,j}\in
\mathbb Z^{N-1}$ for any $j\in \{1,\dots,n_\eps\}$.

Put $F_\eps=\cup_{j=1}^{n_\eps} Q_{\eps,j}$; from this definition
it easily follows that for every $\eps_1>0$ there exists $0<\eps_2<\eps_1$ such that $F_{\eps}\supseteq F_{\eps_1}$ for any $0<\eps<\eps_2$ and moreover $\cup_{\eps>0} F_\eps=W$.
This implies
\begin{equation} \label{eq:HN-1}
\lim_{\eps\to 0} \mathcal H^{N-1}(F_\eps)=\mathcal H^{N-1}(W) \, .
\end{equation}
On the other hand, being $Q_{\eps,j}$ cubes with disjoint
interiors, we deduce that
$$
\mathcal H^{N-1}(F_\eps)=\sum_{j=1}^{n_\eps} \mathcal
H^{N-1}(Q_{\eps,j})=n_\eps \, \eps^{N-1}
$$
which combined with \eqref{eq:HN-1} gives
\begin{equation} \label{eq:est-neps}
\lim_{\eps \to 0} n_\eps \, \eps^{N-1}=\mathcal H^{N-1}(W) \, .
\end{equation}

Being $\varphi\in C^\infty_c(W)$, it is not restrictive to assume
that ${\rm supp}(\varphi)\subset F_\eps$ for all $\eps$ small
enough.

Using a change of variables, exploiting the $Y$-periodicity of $b$,
the Lipschitzianity of $\varphi$, as a consequence of its
smoothness, and \eqref{eq:est-neps}, we obtain
\begin{align*}
& \int_W \sqrt{1+|\nabla_{x'}b(x'/\eps)|^2} \, \varphi(x')\, dx'
=\sum_{j=1}^{n_\eps}
\int_{Q_{\eps,j}}\sqrt{1+|\nabla_{x'}b(x'/\eps)|^2} \,
\varphi(x')\, dx' \\[8pt]
& \qquad =\sum_{j=1}^{n_\eps}
\int_{Y}\sqrt{1+|\nabla_{x'}b(y')|^2} \, \varphi(\eps y'+\eps
z_{\eps,j}) \, \eps^{N-1}\, dy' \\[8pt]
& \qquad =\sum_{j=1}^{n_\eps}
\int_{Y}\sqrt{1+|\nabla_{x'}b(y')|^2} \, \varphi(\eps z_{\eps,j})
\, \eps^{N-1}\, dy'+o(1) \\[8pt]
& \qquad =\left[\int_{Y}\sqrt{1+|\nabla_{x'}b(y')|^2} \, dy' \cdot
\sum_{j=1}^{n_\eps} \varphi(\eps z_{\eps,j})\,  \mathcal
H^{N-1}(Q_{\eps,j})\right]+o(1) \, .
\end{align*}
Taking into account that $\sum_{j=1}^{n_\eps} \varphi(\eps
z_{\eps,j})\,  \mathcal H^{N-1}(Q_{\eps,j})$ represents a Cauchy-Riemann
sum for $\int_W \varphi(x')\, dx'$, letting $\eps\to 0$, the proof
of \eqref{eq:weak-Lp} easily follows.

Clearly, the validity of the weak convergence in $L^p(W)$ for any
$1<p<\infty$ implies the validity of the weak convergence in
$L^1(W)$ thus completing the proof of (ii).

\bigskip

{\bf Proof of (iii).} The validity \eqref{eq:th-main-2-0} follows
immediately from the definition of $g_\eps$. It remains to prove
the validity of \eqref{eq:th-main-2}.

Let $\mathcal Q_\eps$, $n_\eps$, $Q_{\eps,1}\, , \, \dots \, , \,
Q_{\eps,n_\eps}$ and $F_\eps$ be as in the proof of (ii).

By \eqref{eq:HN-1}, \eqref{eq:est-neps} and the $Y$-periodicity of $b$ we infer that
\begin{align} \label{eq:g-eps}
& \mathcal H^{N-1}\left(\{x'\in W:|\nabla_{x'} g_\eps(x')|\le t\}\right)
=\mathcal H^{N-1}\left(\{x'\in W:|\nabla_{x'} \, b(x'/\eps)|\le \eps^{1-\alpha} t\}\right) \\[7pt]
\notag & \le \mathcal H^{N-1}(W\setminus F_\eps)+\mathcal H^{N-1}\left(\{x'\in F_\eps:|\nabla_{x'} \, b(x'/\eps)|\le \eps^{1-\alpha} t\}\right) \\[7pt]
\notag & =o(1)+\sum_{j=1}^{n_\eps} \mathcal H^{N-1}\left(\{x'\in Q_{\eps,j}:|\nabla_{x'} \, b(x'/\eps)|\le \eps^{1-\alpha} t\}\right) \\[7pt]
\notag & =o(1)+\sum_{j=1}^{n_\eps} \eps^{N-1} \, \mathcal H^{N-1}\left(\{y'\in \eps^{-1} Q_{\eps,j}:|\nabla_{x'} \, b(y')|\le \eps^{1-\alpha} t\}\right) \\[7pt]
\notag & =o(1)+\sum_{j=1}^{n_\eps} \eps^{N-1} \, \mathcal H^{N-1}\left(\{y'\in Y:|\nabla_{x'} \, b(y')|\le \eps^{1-\alpha} t\}\right) \\[7pt]
\notag & =o(1)+\eps^{N-1} n_\eps \, \mathcal H^{N-1}\left(\{y'\in Y:|\nabla_{x'} \, b(y')|\le \eps^{1-\alpha} t\}\right) \\[7pt]
\notag & =o(1)+\left(\mathcal H^{N-1}(W)+o(1)\right) \, \mathcal H^{N-1}\left(\{y'\in Y:|\nabla_{x'} \, b(y')|\le \eps^{1-\alpha} t\}\right)\to 0
\end{align}
as $\eps\to 0$ since $0<\alpha<1$ and
$$
\lim_{\delta\to 0} \mathcal H^{N-1}\left(\{y'\in Y:|\nabla_{x'} \, b(y')|\le \delta\}\right)=0
$$
being $\mathcal H^{N-1}\left(\{y'\in Y:|\nabla_{x'} \, b(y')|=0\}\right)=0$ by \eqref{eq:Lebesgue}.

By \eqref{eq:g-eps} and the fact that for any $t>0$ we have
$$
\left\{x'\in W:\sqrt{1+|\nabla_{x'} \, g_\eps(x')|^2}\le t\right\}\subseteq \left\{x'\in W:|\nabla_{x'} \, g_\eps(x')|\le t\right\} \, ,
$$
the validity of \eqref{eq:th-main-2} follows immediately.

\section{Proof of Theorem \ref{t:tricotomia}} \label{s:t-t:tricotomia}

{\bf Proof of (i).} Since $\alpha>1$ it is sufficient to observe
that thanks to Proposition \ref{p:comparison} (i), we may proceed
exactly as in the proof of Theorem \ref{t:main-1} with the
advantage that here we have to deal with an atlas with only one
chart.

\bigskip

{\bf Proof of (ii).} Similarly to (i), since $\alpha=1$, we may
exploit Proposition \ref{p:comparison} (ii) and proceed as in the
proof of Theorem \ref{t:main-3}.

\bigskip

{\bf Proof of (iii).} The proof can be obtained essentially like
the one of Theorem \ref{t:main-2} once we exploit Proposition
\ref{p:comparison} (iii).

We first observe that under the assumptions of Theorem
\ref{t:tricotomia} the conclusions of Lemma \ref{l:vanishing}
still hold true. Then we need a revised version of Lemma
\ref{l:l-i} which takes into account the homogeneous Dirichlet
boundary conditions on $\Sigma_\eps$ appearing in
\eqref{eq:Steklov-modified}.

\begin{lemma} \label{l:l-i-2} Let $\{\Omega_\eps\}_{\eps\ge 0}$ be as in Theorem \ref{t:tricotomia} (iii) and let $\Sigma_\eps$ the set defined in \eqref{eq:Gamma-Sigma}. For any integer $n\ge 0$ there exist $n+1$ functions $\psi_0,\dots, \psi_n$ in $C^\infty(\R^N)$ (independent of $\eps$) with null traces on $\Sigma_\eps$ such that, for any $\eps\ge 0$, $\psi_0,\dots \psi_n$ are linearly independent when restricted to $\partial\Omega_\eps$ in the sense that if $a_0,\dots,a_n\in \R$ are such that
\begin{equation} \label{eq:con-l-i-2}
a_0\psi_0(x)+\dots +a_n\psi_n(x)=0 \qquad \text{for any } x\in \partial\Omega_\eps \, ,
\end{equation}
then $a_0=\dots=a_n=0$.
\end{lemma}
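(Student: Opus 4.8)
The plan is to mimic the proof of Lemma~\ref{l:l-i}, replacing the polynomials $x_1^k$ by functions designed to vanish on the fixed part $\Sigma_\eps$ of the boundary while still being ``genuinely present'' on the graph part $\Gamma_\eps$. The first thing I would record is a purely structural fact, uniform in $\eps$: since $\Omega_\eps=\{(x',x_N):x'\in W,\ -1<x_N<g_\eps(x')\}$ and $g_\eps\ge 0>-1$, every point of $\partial\Omega_\eps$ that does not lie on the graph $\Gamma_\eps=\{(x',g_\eps(x')):x'\in W\}$ must have either $x_N=-1$ or $x'\in\partial W$; hence
$$
\Sigma_\eps\subset\{(x',x_N):x_N=-1\}\cup\{(x',x_N):x'\in\partial W\}\qquad\text{for every }\eps\ge 0 .
$$
This inclusion is the reason a single family $\psi_0,\dots,\psi_n$ can serve all the domains simultaneously.

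Next I would fix $n+1$ functions $\theta_0,\dots,\theta_n\in C^\infty_c(W)$ that are linearly independent as functions on $W$ — for instance bump functions with pairwise disjoint supports — extended by zero outside $W$, and set
$$
\psi_k(x',x_N):=(x_N+1)\,\theta_k(x'),\qquad k=0,\dots,n .
$$
Then each $\psi_k$ lies in $C^\infty(\R^N)$, is independent of $\eps$, vanishes on $\{x_N=-1\}$ because of the factor $x_N+1$, and vanishes on $\{x'\in\partial W\}$ because ${\rm supp}\,\theta_k$ is a compact subset of the open set $W$ and so $\theta_k\equiv 0$ near $\partial W$. By the inclusion above, $\psi_k$ has null trace on $\Sigma_\eps$ for every $\eps\ge 0$, as required.

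It then remains to verify the linear independence on $\partial\Omega_\eps$. If $a_0,\dots,a_n\in\R$ satisfy \eqref{eq:con-l-i-2}, restricting that identity to $\Gamma_\eps$ gives $\sum_{k=0}^n a_k\,(g_\eps(x')+1)\,\theta_k(x')=0$ for all $x'\in W$; since $g_\eps(x')+1\ge 1>0$, this forces $\sum_{k=0}^n a_k\theta_k\equiv 0$ on $W$, whence $a_0=\dots=a_n=0$ by the chosen independence of the $\theta_k$ (the case $\eps=0$ being identical, with $g_0\equiv 0$ and $\Gamma_0=\{(x',0):x'\in W\}$). I do not expect any real obstacle here: the lemma is elementary, and the only delicate point is the uniformity in $\eps$ just mentioned — it is precisely to secure it that the transverse factors $\theta_k$ are taken compactly supported strictly inside $W$ (so they vanish near $\partial W$ no matter how large $g_\eps$ is), while the vertical factor is the fixed function $x_N+1$ vanishing on the common lower face.
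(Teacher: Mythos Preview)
Your proof is correct and follows the same overall strategy as the paper: build the $\psi_k$ as products of a transverse factor compactly supported in $W$ (to kill the lateral part of $\Sigma_\eps$) and a vertical factor vanishing at $x_N=-1$ (to kill the bottom face), then check linear independence on the graph $\Gamma_\eps$. The paper realizes this with $\psi_k(x',x_N)=x_1^k\,\eta_1(x')\,\eta_2(x_N)$, where $\eta_1\in C^\infty_c(W)$ is a single cutoff equal to $1$ on a ball $B'\subset W$ and $\eta_2$ is a smooth cutoff in $x_N$ vanishing for $x_N\le -1$; linear independence is then obtained by restricting to a segment in $B'\times\{x_N=g_\eps(x')\}$ and invoking the polynomial argument of Lemma~\ref{l:l-i}. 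Your version instead places the independence directly into the transverse factors by choosing $\theta_0,\dots,\theta_n\in C^\infty_c(W)$ linearly independent (e.g.\ with disjoint supports) and uses the simpler vertical factor $x_N+1$. This is a mild simplification: you avoid the detour through Lemma~\ref{l:l-i} and the independence step becomes a one-liner, at the cost of not reusing the earlier lemma. Both arguments exploit the same structural observation that $\Sigma_\eps$ is contained in the $\eps$-independent set $\{x_N=-1\}\cup(\partial W\times\R)$, which is what makes a single family work for all $\eps$.
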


\begin{proof} Let $\varphi_0,\dots,\varphi_n$ be the functions defined in \eqref{eq:def-phi}. We now introduce the two following cutoff functions denoted by $\eta_1$ and $\eta_2$ respectively.

Let $W$ be as in \eqref{eq:Omega-eps}. We choose $\eta_1\in C^\infty_c(W)$, $\eta_1\ge 0$ in $W$, $\eta_1\equiv 1$ in $B'$ where $B'\subset W$ is an open ball of $\R^{N-1}$. We then choose $\eta_2\in C^\infty(\R)$ with $\eta_2(t)=0$ for any $t\le -1$ and $\eta_2(t)=1$ for any $t\ge -\frac 12$. Now, for any $k\in \{0,\dots,n\}$ we define
\begin{equation} \label{eq:psi-k}
\psi_k(x',x_N):=\varphi_k(x',x_N)\, \eta_1(x') \, \eta_2(x_N)   \qquad \text{for any } (x',x_N)\in \R^N \, .
\end{equation}
Clearly $\psi_0,\dots,\psi_n$ vanish on $\Sigma_\eps$.

It remains to prove the linear independence of the functions $\psi_0,\dots,\psi_n$ on $\partial\Omega_\eps$.
Let $a_0,\dots,a_n\in \R$ be as in \eqref{eq:con-l-i-2} and as in the proof of Lemma \ref{l:l-i} we define $P(t)=a_0+\sum_{k=1}^n a_k t^k$.

Denote by $S$ the segment $B' \cap \{(x_1,0,\dots,0):x_1\in \R\}$. Then by \eqref{eq:con-l-i-2} and \eqref{eq:psi-k} we have
\begin{align} \label{eq:segment}
0& =\eta_1(x')\, \eta_2(x_N)\sum_{k=0}^n a_k \varphi_k(x',x_N)=\sum_{k=0}^n a_k \varphi_k(x',x_N)  \\[7pt]
\notag & =a_0+\sum_{k=1}^n a_k x_1^k \qquad \text{for any } x\in \{(x',g_\eps(x')):x'\in S\}  \, .
\end{align}
Let $I$ be the open interval defined by $I=\{x_1\in \R:(x_1,0,\dots,0)\in S\}$. Hence by \eqref{eq:segment} the polynomial $P$ vanishes on the interval $I$ which means that $P$ is the null polynomial and in particular $a_0=\dots=a_n=0$. This completes the proof of the lemma.
\end{proof}

We now observe that the eigenvalues of \eqref{eq:Steklov-modified} in $\Omega_\eps$ admit the following
variational characterization
\begin{equation} \label{eq:lambda-n-eps}
\lambda_n(\eps)=\min_{V\in \mathcal V_{\eps,n}} \, \sup_{v\in
V\setminus H^1_0(\Omega_\eps)} \frac{\int_{\Omega_\eps} |\nabla
v|^2 dx}{\int_{\partial\Omega_\eps} v^2 \, dS}
\end{equation}
where
\begin{align*}
& \mathcal V_{\eps,n}:=\{V\subseteq H^1_{0,\Sigma_\eps}(\Omega_\eps):{\rm dim}(V)=n+1 \ \text{and} \ V\nsubseteq H^1_0(\Omega_\eps) \} \, , \\[7pt]
& H^1_{0,\Sigma_\eps}(\Omega_\eps):=\{v\in H^1(\Omega_\eps):v=0 \ \text{on } \Sigma_\eps\} \, ,
\end{align*}
as one can verify by proceeding as we did for Proposition \ref{p:min-max}.

Once we have the variational characterization
\eqref{eq:lambda-n-eps}, we can proceed as in the proof of
Theorem \ref{t:main-2} using here as $V_\eps$ the space
$
V_\eps:={\rm span} \{(\psi_0)_{|\Omega_\eps},\dots,
(\psi_n)_{|\Omega_\eps} \}
$,
where $\psi_0,\dots,\psi_n$ are the functions introduced in the
statement of Lemma \ref{l:l-i-2}. The proof of (iii) then follows.

\bigskip

{\bf Acknowledgments} The authors are members of the Gruppo
Nazionale per l'Analisi Matematica, la Probabilit\`{a} e le loro
Applicazioni (GNAMPA) of the Istituto Nazionale di Alta Matematica
(INdAM). The first author acknowledges partial financial support
from the PRIN project 2017 ``Direct and inverse problems for
partial differential equations: theoretical aspects and
applications''. The authors acknowledge partial financial support
from the INDAM - GNAMPA project 2019 ``Analisi spettrale per
operatori ellittici del secondo e quarto ordine con condizioni al
contorno di tipo Steklov o di tipo parzialmente incernierato''.

This research was partially supported by the research project
 BIRD191739/19  ``Sensitivity analysis of partial differential equations in the mathematical theory of electromagnetism'' of the University of Padova
and by
the research project ``Metodi e modelli per la matematica e le sue
applicazioni alle scienze, alla tecnologia e alla formazione''
Progetto di Ateneo 2019 of the University of Piemonte Orientale
``Amedeo Avogadro''.

\end{document}